\newcommand{\C}{\mathbb{C}}
\newcommand{\N}{\mathbb{N}}
\newcommand{\Z}{\mathbb{Z}}
\renewcommand{\P}{\mathbb{P}}
\newcommand{\card}{\mathrm{card}}
\newcommand{\wdn}{\underline w}
\newcommand{\wu}{\overline w}
\newcommand{\sn}{\textrm{sn}}
\newtheorem{theorem}{Theorem}
\newtheorem{lemma}[theorem]{Lemma}
\newtheorem{corollary}[theorem]{Corollary}
\theoremstyle{definition}
\newtheorem{example}[theorem]{Example}
\newtheorem{remark}[theorem]{Remark}
\numberwithin{equation}{section}
\numberwithin{theorem}{section}
\begin{document}

\title{Holomorphic curves with shift-invariant hyperplane preimages}

\author{Rodney Halburd}
\address{Department of Mathematics, University College London,
Gower Street, London WC1E 6BT, UK} \email{r.halburd@ucl.ac.uk}
\author{Risto Korhonen}
\address{Department of Physics and Mathematics, University of Eastern Finland, P.O. Box 111,
FI-80101 Joensuu, Finland}
\email{risto.korhonen@uef.fi}
\author{Kazuya Tohge}
\address{Graduate School of Natural Science and Technology, Kanazawa University, Kakuma-machi, Kanazawa, 920-1192, Japan}
 \email{tohge@t.kanazawa-u.ac.jp}
\thanks{Supported in part by the Academy of Finland Grant \#112453, \#118314 and \#210245, a grant from the EPSRC, the Isaac Newton Institute for Mathematical Sciences, the Japan Society for the Promotion of Science Grant-in-Aid for Scientific Research (C) \#19540173, and a project grant from the Leverhulme Trust.}
\subjclass[2000]{Primary 32H30, Secondary 30D35}

\keywords{Holomorphic curve, Casorati Determinant, Difference operator, Borel's theorem, Nevanlinna theory, Cartan's Second Main Theorem}

\begin{abstract}
If $f:\C\to\P^n$ is a holomorphic curve of hyper-order less than
one for which $2n+1$ hyperplanes in general position have forward invariant preimages with respect to the
translation $\tau(z)= z+c$, then $f$ is periodic with
period~$c\in\C$. This result, which can be described as a
difference analogue of M.~Green's Picard-type theorem for
holomorphic curves, follows from a more general result presented
in this paper. The proof relies on a new version of Cartan's
second main theorem for the Casorati determinant and an extended
version of the difference analogue of the lemma on the logarithmic
derivatives, both of which are proved here. Finally, an
application to the uniqueness theory of meromorphic functions is
given, and the sharpness of the obtained results is demonstrated
by examples.
\end{abstract}

\maketitle

\section{Introduction}

According to Picard's theorem all holomorphic mappings
$f:\C\to\P^1\setminus{\{a_1,a_2,a_3\}}$ are constants. For holomorphic
curves in $\P^n$ where $n\geq 2$ Bloch \cite{bloch:26} and Cartan
\cite{cartan:28} showed that if a non-constant holomorphic mapping
$f:\C\to\P^n$ misses $n+2$ hyperplanes in general
position, then the image of $f$ lies in a proper linear subspace of $\P^n$.
Here a \textit{hyperplane} $H$ is the set of all points $x\in\P^n$,
$x=[x_0:\cdots:x_n]$, such that
    \begin{equation}\label{hyperplanedef}
    \alpha_0 x_0+\cdots+\alpha_n x_n = 0,
    \end{equation}
where $\alpha_j\in \C$ for
$j=0,\ldots,n$. The hyperplanes
$H_k$, $k=0,\ldots,m$, defined by $\alpha_{0,k}
x_0+\cdots+\alpha_{n,k} x_n = 0$  are said to be in
\textit{general position} if $m\geq n$ and any $n+1$ of the
vectors $\alpha_k=(\alpha_{0,k},\ldots,\alpha_{n,k})\in \C^{n+1}$
are linearly independent.

Another natural generalization of Picard's theorem was given by
Fujimoto \cite{fujimoto:72b} and Green \cite{green:72}, who showed
that if $f:\C\to\P^n$ omits $n+p$ hyperplanes in general position where $p\in\{1,\ldots,n+1\}$, then the image of $f$ is contained in a linear subspace of dimension at most $[n/p]$. In particular, by taking $p=n+1$ it follows
that if the image of a holomorphic function $f:\C\to\P^n$ lies in
the complement of $2n+1$ hyperplanes in general position, then $f$
must be a constant. Further extensions of Picard's theorem for
holomorphic curves missing hyperplanes can be found, for instance,
in \cite{fujimoto:74,green:74,green:75}.

We say that the preimage of a hyperplane $H\subset\P^n$ under
$f$ is \textit{forward invariant} with respect to the translation
$\tau(z)= z+c$ if $\tau(f^{-1}(\{H\}))\subset f^{-1}(\{H\})$ where
$f^{-1}(\{H\})$ and $\tau(f^{-1}(\{H\}))$ are multisets in which
each point is repeated according to its multiplicity. Let, for instance,
$\varphi(z)$ be an entire function given by the pullback divisor
of the hyperplane~$H$. If
	\begin{equation*}
	\varphi(z)=\frac{\varphi^{(i)}(z_0)}{i!}(z-z_0)^i+O((z-z_0)^{i+1}),\quad \varphi^{(i)}(z_0)\not=0,
	\end{equation*}	
and
	\begin{equation*}
	\varphi(z+c)=\frac{\varphi^{(j)}(z_0)}{j!}(z-z_0)^j+O((z-z_0)^{j+1}),\quad \varphi^{(j)}(z_0)\not=0,
	\end{equation*}	
for all $z$ in a neighborhood of $z_0$ and $j\geq i>0$, the point $z_0$ is a
forward invariant element in a preimage of $H$ with respect
to $\tau(z)$, while if $i> j$ then $z_0$ is not a forward
invariant element. By this definition the preimages of omitted
hyperplanes of $f$ are special cases of forward
invariant preimages since in this case $f^{-1}(\{H\})=\emptyset$.
One of the purposes of this paper is to show that analogous
results to Picard's theorem for holomorphic curves $f:\C\to\P^n$
can be obtained even if the image of $f$ intersects with the
target hyperplanes in general position, provided that at the same
time the preimages of these hyperplanes under $f$ are forward invariant with respect to a translation, and the considered holomorphic curve does not grow too fast.

The growth is characterized by the means of Nevanlinna theory in the following way. The \textit{order of growth} of a holomorphic curve $f:\C\to\P^n$
with homogeneous coordinate $f=[f_0:\cdots:f_n]$ is defined by
    \begin{equation}\label{order}
    \sigma(f)=\limsup_{r\to\infty}\frac{\log^+ T_f(r)}{\log r},
    \end{equation}
where $\log^+x=\max\{0,\log x\}$ for all $x\ge0$, and
    \begin{equation}\label{characteristic}
    T_f(r):=\int_0^{2\pi}u(re^{i\theta})\frac{d\theta}{2\pi}-u(0),
    \quad u(z)=\sup_{k\in \{0,\ldots,n\}} \log|f_k(z)|,
    \end{equation}
is the \textit{Cartan characteristic function} of $f$. Note here
that this representation of $f$ is to be reduced in the sense that
the $n+1$ functions $f_j$ are entire functions without common
zeros. The \textit{hyper-order} of a holomorphic curve $f:\C\to\P^n$ is defined by
    \begin{equation}\label{horder}
    \varsigma(f)=\limsup_{r\to\infty}\frac{\log^+\log^+ T_f(r)}{\log
    r}
    \end{equation}
and the usual Nevanlinna hyper-order is
    \begin{equation*}
    \rho_2(w)=\limsup_{r\to\infty}\frac{\log^+\log^+T(r,w)}{\log r},
    \end{equation*}
where $w$ is meromorphic in the complex plane and $T(r,w)$ is the Nevanlinna characteristic function of $w$. Since, by writing $w=[w_0:w_1]$ where $w_0$ and $w_1$ are entire functions without common zeros, it follows that $\rho_2(w)=\varsigma(w)$, we will use the notation $\varsigma(w)$ from now on to denote the hyper-order of the meromorphic function~$w$.

Let $c\in\C$, and let $\mathcal{P}^1_c$ be the field of period $c$ meromorphic functions
defined in $\C$ of hyper-order strictly less than one. The following theorem is a difference analogue of Picard's theorem for holomorphic curves.

\begin{theorem}\label{hypers}
Let $f:\C\to\P^n$ be a holomorphic curve such that
$\varsigma(f)<1$, let $c\in\C$ and let $p\in\{1,\ldots,n+1\}$. If $n+p$ hyperplanes in general position have forward invariant preimages under $f$ with respect to the translation
$\tau(z)= z+c$, then the image of $f$ is contained in a projective
linear subspace over $\mathcal{P}^1_c$ of dimension $\leq[n/p]$.
\end{theorem}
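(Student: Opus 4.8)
The plan is to derive Theorem~\ref{hypers} from the Casorati-determinant version of Cartan's second main theorem and the extended difference analogue of the lemma on the logarithmic derivative announced above, both of which yield error terms $S(r)=o(T_f(r))$ outside an exceptional set of finite logarithmic measure precisely because $\varsigma(f)<1$. Write $f=[f_0:\cdots:f_n]$ in reduced form, let $H_0,\ldots,H_{n+p-1}$ be the given hyperplanes, and set $g_j=(f,H_j)=\sum_k\alpha_{k,j}f_k$. The conclusion is equivalent to the statement that the $\mathcal{P}^1_c$-linear span of $f_0,\ldots,f_n$ has dimension at most $[n/p]+1$, so I pass to the smallest projective linear subspace over $\mathcal{P}^1_c$ containing the image of $f$; after a $\mathcal{P}^1_c$-linear change of coordinates I may assume $f$ takes values in $\P^k$ and is linearly nondegenerate over $\mathcal{P}^1_c$, which is equivalent to the Casorati determinant $\mathcal{C}=\mathcal{C}(f_0,\ldots,f_k)$ being not identically zero.

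The key observation is that forward invariance of the preimage of $H_j$ is equivalent to $h_j:=g_j(z+c)/g_j(z)$ being entire: a zero of multiplicity $m$ at $z_0$ forces a zero of multiplicity at least $m$ at $z_0+c$, so the quotient has no poles. By the extended difference logarithmic derivative lemma, $T(r,h_j)=m(r,h_j)=S(r)$. Iterating $g_j(z+\ell c)=g_j(z)\prod_{i=0}^{\ell-1}h_j(z+ic)$ and factoring $g_{j_a}(z)$ out of the $a$-th column of the shift matrix, for every $(k+1)$-element index set $S$ in general position I obtain $\mathcal{C}(g_j:j\in S)=\bigl(\prod_{j\in S}g_j\bigr)D_S$ with $T(r,D_S)=S(r)$, whereas $\mathcal{C}(g_j:j\in S)=(\det A_S)\,\mathcal{C}$ for an invertible matrix $A_S$ (with entries in $\C$, or in $\mathcal{P}^1_c$ after the reduction). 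Comparing zero divisors gives $N(r,1/\mathcal{C})=\sum_{j\in S}N(r,1/g_j)+S(r)$ for every such $S$; swapping one index at a time forces all $N(r,1/g_j)$ to agree up to $S(r)$, say with common value $N_0(r)$, and hence $N(r,1/\mathcal{C})=(k+1)N_0(r)+S(r)$.

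Applying the Casorati-determinant second main theorem to the $Q$ restricted hyperplanes in general position in $\P^k$ gives $(Q-k-1)T_f(r)\le\sum_j N(r,1/g_j)-N(r,1/\mathcal{C})+S(r)$. Substituting the identities above yields $(Q-k-1)\bigl(T_f(r)-N_0(r)\bigr)\le S(r)$, so as soon as $Q\ge k+2$ one gets $T_f(r)\le N_0(r)+S(r)$, whence $N(r,1/g_j)=T_f(r)+S(r)$ and $m(r,g_j)=S(r)$ for every $j$ by the first main theorem. Since any $k+1$ of the $g_j$ form a reduced representation of $f$, this forces $T_f(r)\le\sum_{j=0}^{k}m(r,g_j)+O(1)=S(r)$, contradicting $T_f(r)\to\infty$ when $k\ge1$. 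Thus a curve nondegenerate over $\mathcal{P}^1_c$ in $\P^k$ admits at most $k+1$ hyperplanes in general position with forward invariant preimages, and feeding this into the combinatorial lemma of Green and Fujimoto, which bounds how many of the $n+p$ hyperplanes restrict to hyperplanes in general position in a $k$-dimensional subspace, produces the required inequality $k\le[n/p]$.

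I expect two steps to carry the real difficulty. The first is establishing the Casorati-determinant form of Cartan's theorem with the sharp error term outside a set of finite logarithmic measure; this rests on the extended difference logarithmic derivative lemma and on a pointwise Cartan-type estimate for $m\bigl(r,\mathcal{C}/(g_{j_0}\cdots g_{j_k})\bigr)$, together with uniform control of the exceptional set under $\varsigma(f)<1$. The second is carrying out the reduction over the function field $\mathcal{P}^1_c$ rather than over $\C$: one must verify that elements of $\mathcal{P}^1_c$ behave like constants in every Nevanlinna estimate (they do, being of characteristic $S(r)$ and invariant under $\tau$), that forward invariance and general position descend to the restricted configuration, and that the Green--Fujimoto counting survives when general position is read over $\mathcal{P}^1_c$. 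The analytic core is comparatively robust; it is this transfer to the function-field setting, and the bookkeeping of the exceptional set, where the argument must be made with care.
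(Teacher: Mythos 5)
Your overall strategy---the Casorati second main theorem plus the observation that forward invariance makes each $h_j=g_j(z+c)/g_j(z)$ entire, so that the zero divisors of the $g_j$ are absorbed into $N(r,1/\mathcal{C})$, followed by a Borel-type combinatorial step---is the paper's strategy. But two of your steps fail as written. First, the claim $T(r,h_j)=m(r,h_j)=S(r)$ is unjustified: $g_j$ is an entire function determined only up to the nowhere-vanishing factor of the reduced representation, and that factor can have hyper-order $\geq 1$ even when $\varsigma(f)<1$ (the paper notes this explicitly, citing Bergweiler; try $f=[e^{e^z}:z\,e^{e^z}]$, for which $m(r,\overline{g}_0/g_0)$ dwarfs $T_f(r)$). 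Theorem~\ref{logder} applies to meromorphic functions of hyper-order less than one, which the individual $g_j$ need not be; in the paper it is applied only to the ratios $f_j/f_k$, whose hyper-order is controlled by \eqref{cartanT}. Consequently your two-sided identities $N(r,1/\mathcal{C})=\sum_{j\in S}N(r,1/g_j)+S(r)$ and the ensuing $N_0(r)$ bookkeeping collapse. Your own factorization $\mathcal{C}(g_j:j\in S)=\bigl(\prod_{j\in S}g_j\bigr)D_S$ with $D_S$ entire does give the one-sided inequality $\sum_{j\in S}N(r,1/g_j)\le N(r,1/\mathcal{C})+O(1)$, and that is all the paper uses; but then absorbing a $(k+2)$-nd counting function into $N(r,1/\mathcal{C})$ (without which the second main theorem yields no contradiction) requires the column-replacement trick of Lemma~\ref{linindepP}, which exploits the linear relation among the $g_j$ and the absence of common zeros. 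That step is missing from your argument.

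Second, the combinatorial endgame does not close. The statement you extract---a curve nondegenerate over $\mathcal{P}^1_c$ in $\P^k$ admits at most $k+1$ hyperplanes in general position with forward-invariant preimages---is strictly weaker than what the $[n/p]$ bound requires, because the restrictions of the $n+p$ hyperplanes to the minimal subspace $W$ need not contain a large subfamily in general position, and bounding the number in general position neither bounds the number of proportionality classes nor forces each class to be large. Concretely, with $n=4$, $p=2$, $k=3$, a configuration with one class of three proportional restrictions and three singleton classes gives exactly $k+1=4$ distinct restricted hyperplanes and violates nothing in your claim, yet the theorem asserts $k\le 2$. What kills such configurations is the difference analogue of Borel's theorem (Theorem~\ref{borelanalogue}): in the full-support linear relation satisfied by any $n+2$ of the forms, each $\mathcal{P}^1_c$-proportionality class must separately sum to zero, so no class can be a singleton there; this forces every class to have at least $p$ elements, hence at most $(n+p)/p$ classes and dimension at most $n/p$. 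The ``Green--Fujimoto combinatorial lemma'' you invoke is, in Green's own proof, precisely this Borel-plus-class-counting argument, so it cannot be fed your weaker input. Relatedly, restricting to $W$ produces hyperplanes with coefficients in $\mathcal{P}^1_c$, whereas Theorem~\ref{casorati} is stated for $\C$-linear combinations; making the second main theorem applicable there requires clearing denominators with an auxiliary function $F$ and re-verifying the hyper-order condition, as in Lemma~\ref{linindepP}---a difficulty you flag but do not resolve.
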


The following example shows that the growth condition $\varsigma(f)<1$ in Theorem~\ref{hypers} cannot be replaced by $\varsigma(f)\leq1$.

\begin{example}
Put $\omega=2\pi /(2\log 6)$ and consider a linearly non-degenerate holomorphic curve $f:\mathbb{C}\to\mathbb{P}^3$ given by
$$
f:=[-\sin^2 \omega z: -\cos^2 \omega z: (\sin^2 \omega z) \exp e^z : (\cos^2 \omega z) \exp e^z],
$$
which is not $(2\log 6)$-periodic and has the hyper-order $\varsigma(f)=1$.
Take the following seven hyperplanes located in general position in $\mathbb{P}^3$:
$$
H_1=\{w\, |\, h_1(w):=w_0=0\}\,,
$$
$$
H_2=\{w\, |\, h_2(w):=w_1=0\}\,,
$$
$$
H_3=\{w\, |\, h_3(w):=w_2=0\}\,,
$$
$$
H_4=\{w\, |\, h_4(w):=w_3=0\}\,,
$$
$$
H_5=\{w\, |\, h_5(w):=w_0+w_1+w_2+w_3=0\}\,,
$$
$$
H_6=\{w\, |\, h_6(w):=w_0+\eta_5w_1+\eta_5^2w_2+\eta_5^3w_3=0\}\,,
$$
$$
H_7=\{w\, |\, h_7(w):=w_0+\eta_7w_1+\eta_7^2w_2+\eta_7^3w_3=0\}\,,
$$
where $w=[w_0:w_1:w_2:w_3]$ and $\eta_5$ and $\eta_7$ are the primitive fifth and seventh root of unity,
respectively.
Then we have
$$
h_1(f)= -\sin^2 \omega z\,, \quad h_2(f)= -\cos^2 \omega z\,,
$$
$$
h_3(f)= (\sin^2 \omega z) \exp e^z\,, \quad h_4(f)= (\cos^2 \omega z) \exp e^z,
$$
all of whose zero preimages are forward invariant with respect to $\tau(z)=z+2\log 6$, while
$$
h_5(f)= (\sin^2 \omega z + \cos^2 \omega z ) (\exp e^z-1) \,,
$$
$$
h_6(f)= \eta_5^2(\sin^2 \omega z + \eta_5 \cos^2 \omega z ) (\exp e^z - \eta_5^3) \,,
$$
$$
h_7(f)= \eta_7^2(\sin^2 \omega z + \eta_7 \cos^2 \omega z ) (\exp e^z - \eta_7^5) \,,
$$
each of whose zeros have forward invariant preimages with respect to $\tau(z)=z+2\log 6$, or are points such that $\exp e^z =\alpha$ for some $\alpha\in\{1,\eta_5^3, \eta_7^5\}$.
Then $\alpha$ is a $35^\textrm{th}$ root of unity and thus all preimages of these hyperplanes are forward invariant with respect to $\tau(z)=z+2\log 6$. On the other hand, the image of $f$ is contained in a projective linear subspace over $\mathcal{P}_{2\log6}^1$ of dimension $1$ (even though $f$ is linearly non-degenerate in the usual sense) but `$[n/p]$' in Theorem~\ref{hypers} is $[n/p]=[3/(7-3)]=0$. In fact the image is on the projective line described by the two hyperplanes
$(\cos^2\omega z)w_0-(\sin^2\omega z)w_1=0$ and $(\cos^2\omega z)w_2-(\sin^2\omega z)w_3=0$
over $\mathcal{P}_{2\log6}^1$, and also it does not degenerate into a singleton in the space, since $\exp e^z \not\in \mathcal{P}_{2\log6}^1$.
\end{example}

An example demonstrating the sharpness of the upper bound $[n/p]$ in Theorem~\ref{hypers} is given in section \ref{examplesec} below.
The following corollary is immediately obtained by applying Theorem~\ref{hypers}
with $p=n+1$.

\begin{corollary}\label{hyperscor}
Let $f:\C\to\P^n$ be a holomorphic curve such that
$\varsigma(f)<1$, and let $c\in\C$. If $2n+1$ hyperplanes in general position have forward invariant preimages under $f$ with respect to
the translation $\tau(z)= z+c$, then $f$ is periodic with
period~$c$.
\end{corollary}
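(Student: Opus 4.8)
The plan is to invoke Theorem~\ref{hypers} directly with the choice $p=n+1$, which is admissible since $p\in\{1,\ldots,n+1\}$. With this choice the number of hyperplanes required by the theorem is $n+p=2n+1$, which is exactly what is available in the hypothesis, and these hyperplanes are assumed to be in general position with forward invariant preimages under $\tau(z)=z+c$. Theorem~\ref{hypers} then asserts that the image of $f$ is contained in a projective linear subspace over $\mathcal{P}^1_c$ of dimension at most $[n/p]=[n/(n+1)]$. For every $n\ge 1$ one has $0<n/(n+1)<1$, so $[n/(n+1)]=0$, and the image of $f$ therefore lies in a zero-dimensional such subspace, i.e.\ in a single $\mathcal{P}^1_c$-point of $\P^n$.

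The remaining task is to translate this algebraic conclusion into periodicity. Writing $f=[f_0:\cdots:f_n]$ in reduced form, membership in a zero-dimensional projective linear subspace over $\mathcal{P}^1_c$ means that the coordinate functions span a one-dimensional space over $\mathcal{P}^1_c$; equivalently, after dividing by a fixed nonzero coordinate one obtains $f=[b_0:\cdots:b_n]$ with all $b_j\in\mathcal{P}^1_c$, so that every ratio $f_i/f_j$ is a period-$c$ meromorphic function of hyper-order less than one. Since the point $f(z)\in\P^n$ is completely determined by these ratios, and each of them satisfies $(f_i/f_j)(z+c)=(f_i/f_j)(z)$, I would conclude that $f(z+c)$ and $f(z)$ have identical homogeneous coordinate ratios and hence represent the same point of $\P^n$. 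Thus $f(z+c)=f(z)$ for all $z$, i.e.\ $f$ is periodic with period $c$.

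Because the whole analytic apparatus is absorbed into Theorem~\ref{hypers}, there is no genuine analytic obstacle here; the only point requiring care is the bookkeeping of what a zero-dimensional projective linear subspace defined over the coefficient field $\mathcal{P}^1_c$ actually asserts about $f$. The crux is the equivalence ``dimension $0$ over $\mathcal{P}^1_c$'' $\Longleftrightarrow$ ``all coordinate ratios lie in $\mathcal{P}^1_c$'', together with the elementary but essential observation that two points of $\P^n$ coincide precisely when their homogeneous coordinate ratios coincide. One should also note that the image, being a nonempty curve, cannot lie in the empty subspace, so the bound $[n/p]=0$ is attained rather than being vacuous. Once this identification is made explicit, the passage from Theorem~\ref{hypers} to the $c$-periodicity of $f$ is immediate.
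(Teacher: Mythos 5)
Your proposal is correct and follows exactly the paper's route: the paper obtains the corollary by applying Theorem~\ref{hypers} with $p=n+1$, noting $[n/(n+1)]=0$, so that the image lies in a single point of $\P^n$ with coordinates in $\mathcal{P}^1_c$, whence $f(z+c)=f(z)$. Your additional unpacking of why a zero-dimensional subspace over $\mathcal{P}^1_c$ forces all coordinate ratios to be $c$-periodic is a correct elaboration of the step the paper leaves implicit.
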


If the preimage of a hyperplane under a holomorphic curve
$f:\C\to\P^n$ is empty, then it is clearly forward invariant with
respect to all translations of the complex plane. Therefore, if
$f$ omits $2n+1$ hyperplanes in general position, then it follows
by Corollary~\ref{hyperscor} that $f$ is, in fact, a periodic
holomorphic curve with all periods $c\in\C$. This is, of course,
only possible when $f$ is a constant function. We have just shown
that Corollary~\ref{hyperscor} implies M.~Green's Picard-type
theorem for holomorphic curves \cite{green:72} in the special case
$\varsigma(f)<1$.

A simple example shows that the growth condition $\varsigma(f)<1$
in Corollary~\ref{hyperscor} cannot
be significantly weakened. For $f(z)=[\exp(\exp(z)):1]:\C\to\P^1$
each of the $n^{\textrm{th}}$ roots of unity $[1:-\exp(2 m\pi
i/n)]$, $m\in\{1,\ldots,n\}$, has a forward invariant preimage
with respect to $\tau(z)= z+\log(n+1)$, but nevertheless
$f(z)\not\equiv f(z+\log(n+1))$. Therefore $f$ is an example of a
holomorphic curve which has arbitrarily many target values with
forward invariant preimages, even though it just barely fails to
satisfy the condition $\varsigma(f)<1$.

Finite-order meromorphic solutions of difference equations have
been under careful study recently.
Ruijsenaars has been studying minimal solutions of certain classes
of linear difference equations as part of a programme of
developing Hilbert space theory for analytic difference operators
\cite{ruijsenaars:97,ruijsenaars:00}. In the nonlinear case,
Ablowitz, Halburd and Herbst \cite{ablowitzhh:00} suggested that
the existence of sufficiently many finite-order meromorphic
solutions can be used to detect difference equations of Painlev\'e
type. Difference quotient estimates
\cite{halburdk:06JMAA,halburdk:06AASFM,chiangf:08,chiangf:09} have proved to be useful tools in much
of the recent analysis involving finite-order meromorphic solutions of difference equations (see, e.g. \cite{chiangr:06,halburdk:07PLMS,halburdk:07JPA,lainey:07}) but so far there is limited amount of information available on the
behavior of fast growing solutions. Another main purpose of this paper is to show that if $f$ is a meromorphic function such that $\varsigma(f)=\varsigma<1$ and $\varepsilon>0$, then
    \begin{equation}\label{DAintro}
    m\left(r,\frac{f(z+c)}{f(z)}\right)=o
    \left(\frac{T(r,f)}{r^{1-\varsigma-\varepsilon}}\right)
    \end{equation}
for all $r$ outside of a set of finite logarithmic measure (see Theorem~\ref{logder} below). The type of difference analogue of the lemma on the logarithmic derivatives represented by \eqref{DAintro} cannot be in general extended to meromorphic functions of hyper-order at least one, since $g(z)=\exp(2^z)$ satisfies $g(z+1)/g(z)=g(z)$, and so $m(r,g(z+1)/g(z))=T(r,g)$.

The remainder of the paper is organized in the following way.
Section~\ref{cartansec} contains a difference analogue of Cartan's
generalization of the second main theorem of Nevanlinna theory, which will be applied in section~\ref{borelsec} to obtain a difference analogue of Borel's theorem on linear combinations of entire functions without zeros. These results are some of the main components in the proof of
Theorem~\ref{hypers} in section~\ref{section_proofh}.  Applications
of these results to uniqueness theory of meromorphic functions are
discussed in section~\ref{uniquenesssec}. The proof of the
difference Cartan in section~\ref{section_proof} relies on a
logarithmic difference estimate given in section~\ref{logdesec},
and proved in section~\ref{section_lemma}. A discussion on
$q$-difference analogues of the above results is given in
section~\ref{section_q}, and, finally, the sharpness of some of
the main results is considered in section~\ref{examplesec}.

\section{Difference analogue of Cartan's second main theorem}\label{cartansec}

The second main theorem of Nevanlinna theory \cite{nevanlinna:25}
is a deep generalization of Picard's theorem for meromorphic
functions in the complex plane, and a cornerstone on which the
whole value distribution theory lies. Cartan's version of
the second main theorem \cite{cartan:33} is a generalization of this result  to holomorphic curves \cite{lang:87}, and it has also turned out to be a useful tool for certain problems in the complex plane, for instance, in considering Waring's problem for analytic functions \cite{hayman:84} and unique range sets for entire functions \cite{gundersen:03,gundersenh:04}.

We now recall some of the known properties of the Cartan
characteristic function from \cite{gundersenh:04,lang:87}. For
instance, if $g=[g_0:\cdots:g_n]$ with $n\geq 1$ is a reduced
representation of a non-constant holomorphic curve $g$, then $T_g(r)\to\infty$ as $r\to\infty$, and
if at least one quotient $g_j/g_m$ is a transcendental function,
then $T_g(r)/\log r\to\infty$ as $r\to\infty$. Moreover, if
$f_0,\ldots,f_q$ are $q+1$ linear combinations of the functions
$g_0,\ldots,g_n$ over $\C$, where $q>n$, such that any $n+1$ of
the $q+1$ functions $f_0,\ldots,f_q$ are linearly independent,
then
    \begin{equation}\label{cartanT}
    T\left(r,\frac{f_\mu}{f_\nu}\right)\leq T_g(r) + O(1)
    \end{equation}
where $r\to\infty$, and $\mu$ and $\nu$ are distinct integers in
the set $\{0,\ldots,q\}$. Moreover, if $n=1$, then \eqref{cartanT}
becomes an asymptotic identity.

The order of a holomorphic curve $f:\C\to\P^n$ is independent of the
reduced representation of $f$. For if $[f_0:\cdots:f_n]$ and
$[F_0:\cdots:F_n]$ are two reduced representations of the curve
$f$, then, since the $f_j$'s and $F_j$'s are entire and
    \begin{equation*}
    \max_{j=0,\ldots,n}|f_j(z)|\not=0 \,\textrm{ and }\,
    \max_{j=0,\ldots,n}|F_j(z)|\not=0,
    \end{equation*}
it follows that there exists a nowhere vanishing entire function
$h$ such that
    \begin{equation*}
    F_j(z)=h(z)f_j(z)
    \end{equation*}
for all $z\in\C$ and $j\in\{0,\ldots,n\}$. By writing
$F=[F_0:\cdots:F_n]$ and defining
    \begin{equation*}
    T_F(r)=\int_0^{2\pi}U(re^{i\theta})\frac{d\theta}{2\pi}-U(0),
    \quad U(z)=\sup_{k\in \{0,\ldots,n\}} \log|F_k(z)|,
    \end{equation*}
it follows that
    \begin{equation*}
    T_F(r)=T_f(r)+\frac{1}{2\pi}\int_0^{2\pi}\log|h(re^{i\theta})|d\theta-\log|h(0)|.
    \end{equation*}
However, since $h(z)$ is entire and nowhere zero, it follows that
$\log|h(z)|$ is harmonic, and therefore
    \begin{equation*}
    \log|h(0)|=\frac{1}{2\pi}\int_0^{2\pi}\log|h(re^{i\theta})|d\theta.
    \end{equation*}
Hence $T_F(r)=T_f(r)$ is independent of the representation of $f$
in terms of projective coordinates, and so the order of $f$ is
well defined by \eqref{order}. We refer to \cite{lang:87} for the
full description of Cartan's value distribution theory, and
\cite{cherryy:01,hayman:64,yang:93} for the standard notation of
Nevanlinna theory.

Let $g(z)$ be a meromorphic function, and let $c\in\C$. We will
use the short notation
    \begin{equation*}
    g(z)\equiv g, \quad g(z+c)\equiv \overline{g},\quad g(z+2c)\equiv
    \overline{\overline{g}}\quad \textrm{and} \quad g(z+nc)\equiv
    \overline{g}^{[n]}
    \end{equation*}
to suppress the $z$-dependence of $g(z)$. The \textit{Casorati
determinant} of $g_0,\ldots,g_n$ is then defined by
    \begin{equation*}
    C(g_0,\ldots,g_n)=\left|\begin{array}{ccccc}
      g_0 & g_1 & \cdots & g_n \\
      \overline{g}_0 & \overline{g}_1 & \cdots & \overline{g}_n \\
      \vdots & \vdots & \ddots & \vdots \\
      \overline{g}^{[n]}_0 & \overline{g}^{[n]}_1 &  \cdots & \overline{g}^{[n]}_n \\
    \end{array}\right|.
    \end{equation*}

In Cartan's generalization of the second main theorem the
ramification term is expressed in terms of the Wronskian
determinant of a set of linearly independent entire functions. The
following theorem is a difference analogue of Cartan's result
where the ramification term has been replaced by a quantity
expressed in terms of the Casorati determinant of functions which
are linearly independent over a field of periodic functions.

\medskip

\noindent\textbf{Note added in proof. } After this paper was completed we learnt of the paper \cite{wonglw:09} by Pit-Mann Wong, Hiu-Fai Law and Philip P. W. Wong, where a result which is very similar to Theorem \ref{casorati} was obtained. The distinct difference between the two results is found in the assumption on the growth order of a holomorphic curve under consideration. In \cite{wonglw:09} it is assumed that the curve is given by a reduced representation whose components are all entire functions of finite order, while a case of infinite order is permitted in this paper. The difference has caused different choices of an auxiliary function which is essential in the two papers, which might be observed by the expressions $L$ in \eqref{H} and $\tilde{L}$ in \eqref{Ltilde} here in this paper.

\medskip

\begin{theorem}\label{casorati}
Let $n\geq 1$, and let $g_0,\ldots,g_n$ be entire functions,
linearly independent over $\mathcal{P}^1_c$, such that
$\max\{|g_0(z)|,\ldots,|g_n(z)|\}>0$ for each $z\in\C$, and
    \begin{equation}\label{growth}
    \varsigma:=\varsigma(g)<1,\quad
    g=[g_0:\cdots:g_n].
    \end{equation}
Let $\varepsilon>0$. If $f_0,\ldots,f_q$ are $q+1$ linear
combinations of the $n+1$ functions
$g_0,\ldots,g_n$, where $q>n$, such that any $n+1$ of the $q+1$
functions $f_0,\ldots,f_q$ are linearly independent, and
    \begin{equation}\label{H}
    L=\frac{f_0f_1\cdots
    f_q}{C(g_0,g_1,\ldots,g_n)},
    \end{equation}
then
    \begin{equation}\label{casoratiineq}
    (q-n)T_g(r)\leq
    N\left(r,\frac{1}{L}\right)-N(r,L)+o\left(\frac{T_g(r)}{r^{1-\varsigma-\varepsilon}}\right)+O(1),
    \end{equation}
where $r$ approaches infinity outside of an exceptional set $E$ of finite
logarithmic measure (i.e. $\int_{E\cap[1,\infty)} dt/t <\infty$).
\end{theorem}

In \cite{halburdk:06AASFM} an analogue of the second main theorem
for the difference operator $\Delta_c f=f(z+c)-f(z)$ was introduced.
We will now show that, for constant targets, Theorem~\ref{casorati} is a generalization
of this result in a similar way as Nevanlinna's second main
theorem follows by Cartan's result.

Let $w$ be a meromorphic function such that the usual Nevanlinna
hyper-order satisfies $\varsigma(w)<1$. Then there exist linearly independent
entire functions $g_0$ and $g_1$ with no common zeros such that
$w=g_0/g_1$, and, according to \eqref{cartanT}, it follows that
$\varsigma(g)<1$ for $g=[g_0:g_1]$. Note that in general the
entire functions $g_0$ and $g_1$ themselves may be of hyper-order greater or
equal to one (see
\cite{bergweiler:94}).

Let $a_j\in\C$ for $j=0,\ldots,q-1$, and denote $f_j=g_0-a_j g_1$
and $f_{q}=g_1$. Then, by Theorem \ref{casorati}, it follows that
    \begin{equation}\label{applT}
    (q-1)T_g(r)\leq N\left(r,\frac{1}{L}\right)-N(r,L)+o(T_g(r))
    \end{equation}
where
    \begin{equation*}
    L=\frac{f_0f_1\cdots
    f_{q-1}g_{1}}{g_0\overline{g}_1-\overline{g}_0
    g_1}.
    \end{equation*}
We define the counting function $\widetilde{N}$ for $a\in\C$ as in
\cite{halburdk:06AASFM} by
    \begin{equation}\label{c1}
    \widetilde{N}\left(r,\frac{1}{w-a}\right)=\int_0^r\frac{\widetilde{n}(t,a)-\widetilde{n}(0,a)}{t}\,dt+
    \widetilde{n}(0,a)\log r
    \end{equation}
where $\widetilde{n}(r,a)$ counts the number of $a$-points of $w$
with multiplicity of $w(z_0)=a$ counted according to multiplicity
of $a$ at $z_0$ \textit{minus} the order of the (possible) zero of
$\Delta_c w$ at $z_0$. The pole counting function is then
    \begin{equation}\label{c2}
    \widetilde{N}(r,w)=\widetilde{N}\left(r,\frac{1}{1/w}\right).
    \end{equation}
By interpreting \eqref{applT} in terms of the counting
functions \eqref{c1} and \eqref{c2}, and using \eqref{cartanT} we
have
    \begin{equation*}
    (q-1)T(r,w)\leq \widetilde{N}(r,w)+ \sum_{j=0}^{q-1}
    \widetilde{N}\left(r,\frac{1}{w-a_j}\right)-N_0\left(r,\frac{1}{\Delta_c w}\right)
    +o(T(r,w))
    \end{equation*}
where $N_0(r,1/\Delta_c w)$ counts the number of those zeros of
$\Delta_c w$ which do not coincide with any of the $a_j$-points or
poles of $w$, and $r$ runs to infinity outside of a set of finite
logarithmic measure. This is an extension of \cite[Theorem
2.5]{halburdk:06AASFM} as desired.

\section{Difference analogue of Borel's theorem}\label{borelsec}

According to Borel's theorem, if $h_0,\ldots,h_n$ are entire functions without zeros, then the only possible solutions of the equation
	\begin{equation}\label{boreleqh}
	h_0+\cdots+h_n=0
	\end{equation}
are trivial solutions of the form
	$$
	h_0+\cdots+h_n=\sum_{k=1}^l\sum_{i\in S_k} c_{i,j_k} h_{j_k},
	$$
where $S_k$, $k=1,\ldots,l$, is the partition of $\{0,\ldots,n\}$ formed so that $i$ and $j$ are in $S_k$ if and only if $h_i/h_j\in \C$, and  $\sum_{i\in S_k} c_{i,j_k}=0$ for all $k=1,\ldots,l$ (see, e.g. \cite[p.~186]{lang:87} or \cite[p.~124]{ru:01}). The following difference analogue of Borel's theorem will be one of the key results needed in the proof of Theorem~\ref{hypers}.

\begin{theorem}\label{borelanalogue}
Let $c\in\C$, and let $g=[g_0:\cdots:g_n]$ be a holomorphic curve such that $\varsigma(g)<1$ and such that preimages of all zeros of $g_0,\ldots,g_n$ are forward invariant with respect to the translation $\tau(z)=z+c$. Let
	\begin{equation*}
	S_1\cup\cdots\cup S_l
	\end{equation*}
be the partition of $\{0,\ldots,n\}$ formed in such a way that $i$ and $j$ are in the same class $S_k$ if and only if $g_i/g_j\in \mathcal{P}^1_c$. If
	\begin{equation}\label{boreleq}
	g_0+\ldots + g_n=0,
	\end{equation}
then
	\begin{equation*}
	\sum_{i\in S_k} g_i =0
	\end{equation*}
for all $k\in\{1,\ldots,l\}$.
\end{theorem}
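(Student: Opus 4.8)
The plan is to argue by induction on $n$, using Theorem~\ref{casorati} in the inductive step in the same way that Cartan's second main theorem yields the classical Borel theorem, with the forward invariance of the zero preimages playing the role that the hypothesis ``without zeros'' plays classically. The base case $n=1$ is trivial, since $g_0=-g_1$ gives $g_0/g_1\in\C\subset\mathcal{P}^1_c$.

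First I would dispose of the case in which \eqref{boreleq} is not \emph{minimal}, i.e. some proper nonempty subsum $\sum_{i\in I}g_i$ vanishes. Then $\{g_i\}_{i\in I}$ and $\{g_i\}_{i\notin I}$ each satisfy the hypotheses of the theorem (after dividing out any common factor of the components, which preserves both $\varsigma<1$ and the forward invariance of the zeros), so the induction hypothesis applies to each. Since the relation $g_i/g_j\in\mathcal{P}^1_c$ is intrinsic, the classes $S_k$ of the full system restrict to the classes of each subsystem and each restricted subsum vanishes; adding the two contributions gives $\sum_{i\in S_k}g_i=0$. It therefore suffices to treat a minimal relation, and to show in that case that all the ratios $g_i/g_j$ lie in $\mathcal{P}^1_c$, so that there is a single class and the conclusion reduces to \eqref{boreleq}.

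For a minimal relation I would split according to whether $g_0,\dots,g_{n-1}$ are linearly independent over $\mathcal{P}^1_c$. If they are, set $G=[g_0:\cdots:g_{n-1}]$ and apply Theorem~\ref{casorati} with $f_i=g_i$ for $0\le i\le n-1$ and $f_n=g_0+\cdots+g_{n-1}=-g_n$, so that $q=n$ and the coefficient $q-(n-1)$ equals $1$; the coefficient vectors $e_0,\dots,e_{n-1},(1,\dots,1)\in\C^n$ are in general position, which together with the independence of $g_0,\dots,g_{n-1}$ gives the required independence of any $n$ of the $f_j$. The point is that forward invariance forces the numerator of $L$ to be absorbed by zeros of the Casorati determinant: if $g_i(z_0)=0$ with $i\le n-1$, then $g_i(z_0+kc)=0$ for every $k\ge0$, so the whole $i$-th column of $C(g_0,\dots,g_{n-1})$ vanishes at $z_0$ to order at least $\mathrm{ord}_{z_0}g_i$; and if $g_n(z_0)=0$, then $\sum_{i<n}g_i(z_0+kc)=-g_n(z_0+kc)=0$ for every $k\ge0$, so the columns of the Casorati matrix sum to a vector vanishing at $z_0$ to order at least $\mathrm{ord}_{z_0}g_n$. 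Since $g_0,\dots,g_n$ have no common zero, not all columns can be exhausted simultaneously, and these observations combine to give the divisor bound $N(r,1/C(g_0,\dots,g_{n-1}))\ge\sum_{i=0}^nN(r,1/g_i)$. As $N(r,1/L)-N(r,L)=N(r,1/P)-N(r,1/C)$ with $P=f_0\overline{f}_1\cdots\overline{f}_{n-1}^{\,[n-1]}f_n$, and $N(r,1/\overline{g_i}^{\,[i]})=N(r,1/g_i)+o(T_G(r))$ by Lemma~\ref{technical}, we obtain $N(r,1/L)-N(r,L)\le o(T_G(r))$. Feeding this into Theorem~\ref{casorati} gives $T_G(r)\le o(T_G(r))+O(1)$, which is impossible because $T_G(r)\to\infty$ when $n\ge2$. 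Hence $g_0,\dots,g_{n-1}$ must be linearly \emph{dependent} over $\mathcal{P}^1_c$.

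Finally, from a nontrivial relation $\sum_i\phi_i g_i=0$ with $\phi_i\in\mathcal{P}^1_c$ I would eliminate one function and substitute into \eqref{boreleq} to obtain a vanishing sum of at most $n$ terms with $\mathcal{P}^1_c$-coefficients; multiplying through by a suitable period-$c$ entire function clears the denominators, and since the zeros of a period-$c$ function are automatically forward invariant and contribute nothing to the hyper-order, the resulting entire functions again satisfy the hypotheses of the theorem. Applying the induction hypothesis and translating the resulting classes back through the periodic proportionality factors shows that all ratios $g_i/g_j$ lie in $\mathcal{P}^1_c$, completing the minimal case. The main obstacle I anticipate is precisely the divisor estimate $N(r,1/C)\ge\sum_i N(r,1/g_i)$: one must verify that forward invariance produces enough vanishing of the Casorati determinant to absorb every zero of the numerator of $L$, with the correct multiplicities and without double counting at points where several $g_i$ vanish simultaneously. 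The remaining difficulties --- passing to reduced representations of the subcurves and clearing periodic coefficients while preserving forward invariance and the growth condition --- I expect to be routine but delicate.
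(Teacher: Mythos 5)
Your core mechanism is the same as the paper's: the paper routes the theorem through Lemma~\ref{linindepP} (pairwise non-proportionality over $\mathcal{P}^1_c$ implies linear independence over $\mathcal{P}^1_c$), and the proof of that lemma is precisely your key step --- apply Theorem~\ref{casorati} to a putatively independent subfamily, use forward invariance to show that every zero of the numerator of $L$ is absorbed by the Casorati determinant (including the column-sum trick for zeros of the eliminated function $g_n$), conclude $T_G(r)=O(1)$, and iterate the resulting dependence relations. Once that lemma is available, the theorem follows in three lines by grouping \eqref{boreleq} as $\sum_k B_k g_{j_k}=0$ with $B_k\in\mathcal{P}^1_c$ and concluding $B_k\equiv 0$. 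Your divisor estimate is correct, and your explicit use of Lemma~\ref{technical} to replace $N(r,1/\overline{g}_i^{\,[i]})$ by $N(r,1/g_i)+o(T_G(r))$ is in one respect more careful than the paper's wording.

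Two steps, however, do not go through as written. First, in the non-minimal case you assert that dividing the subsystem $\{g_i\}_{i\in I}$ by the common factor of its components preserves forward invariance of the zero preimages. This is false in general: forward invariance gives $\mathrm{ord}_{z_0+c}\,g_i\ge \mathrm{ord}_{z_0}\,g_i$ for each $i$, but $\min_{j\in I}\mathrm{ord}_{z_0+c}\,g_j$ may exceed $\min_{j\in I}\mathrm{ord}_{z_0}\,g_j$ by more than $\mathrm{ord}_{z_0+c}\,g_i-\mathrm{ord}_{z_0}\,g_i$ for a particular $i$, so the reduced function $g_i/d$ can lose forward invariance at $z_0$. (The paper confronts the same difficulty when it introduces the auxiliary function $F$ in the proof of Lemma~\ref{linindepP}; it is a point that needs an argument, not a routine reduction.) Second, in the minimal case a single application of the induction hypothesis to the eliminated relation only shows that certain $\mathcal{P}^1_c$-combinations over the classes of the \emph{shorter} relation vanish; this produces \emph{some} new ratios in $\mathcal{P}^1_c$ (a singleton class with nonzero coefficient is impossible since $g_i\not\equiv 0$), but to reach the conclusion that \emph{all} ratios $g_i/g_j$ lie in $\mathcal{P}^1_c$ you must substitute these back into \eqref{boreleq} and repeat, exactly as in the paper's ``after at most $n-2$ iterations of the above reasoning.'' Neither gap is fatal to the strategy, but both must be closed before the proof is complete.
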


For the proof of Theorem~\ref{borelanalogue} we need two lemmas. The first one characterizes linear dependence of the coordinate functions of $g$ over the field $\mathcal{P}^1_c$.

\begin{lemma}\label{casoratilemma}
If the holomorphic curve $g=[g_0:\cdots:g_n]$ satisfies $\varsigma(g)<1$ and if $c\in\C$, then $C(g_0,\ldots,g_n)\equiv 0$ if and only if the entire functions $g_0,\ldots,g_n$ are linearly dependent over the field $\mathcal{P}^1_c$.
\end{lemma}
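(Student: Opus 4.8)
The statement is the difference (Casorati) analogue of the classical fact that the Wronskian of meromorphic functions vanishes identically precisely when the functions are linearly dependent over the constants, the role of the constant field now being played by $\mathcal{P}^1_c$. The plan is to treat the two implications separately. The forward direction is routine: if $\sum_{j=0}^n a_j g_j=0$ with $a_j\in\mathcal{P}^1_c$ not all zero, then replacing $z$ by $z+kc$ and using $a_j(z+kc)=a_j(z)$ gives $\sum_j a_j\overline g_j^{[k]}=0$ for every row $k=0,\ldots,n$, so the columns of the Casorati matrix satisfy a nontrivial linear relation over the field of meromorphic functions, whence $C(g_0,\ldots,g_n)\equiv 0$. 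The converse is the substantial part, and the growth hypothesis $\varsigma(g)<1$ will enter only at the end, to guarantee that the coefficients we produce lie in $\mathcal{P}^1_c$ rather than merely in the field of all period-$c$ meromorphic functions.

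For the converse I would induct on $n$. The case $n=1$ is the computation that $C(g_0,g_1)=g_0\overline g_1-\overline g_0 g_1\equiv0$ forces $g_0/g_1$ to be $c$-periodic, while \eqref{cartanT} (an identity when $n=1$) gives $\varsigma(g_0/g_1)\le\varsigma(g)<1$, so $g_0/g_1\in\mathcal{P}^1_c$. For the inductive step, assume $C(g_0,\ldots,g_n)\equiv0$. If $C(g_0,\ldots,g_{n-1})\equiv0$, then the sub-tuple has hyper-order $\le\varsigma(g)<1$, and the inductive hypothesis yields a dependence of $g_0,\ldots,g_{n-1}$, hence of $g_0,\ldots,g_n$, over $\mathcal{P}^1_c$.

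Otherwise $M:=C(g_0,\ldots,g_{n-1})\not\equiv0$, so the top-left $n\times n$ block of the Casorati matrix is nonsingular over the meromorphic functions, and Cramer's rule produces unique meromorphic $b_0,\ldots,b_{n-1}$ with $\overline g_n^{[k]}=\sum_{j=0}^{n-1}b_j\overline g_j^{[k]}$ for $k=0,\ldots,n-1$. Subtracting $b_j$ times column $j$ from the last column of $C(g_0,\ldots,g_n)$ and expanding shows $C(g_0,\ldots,g_n)=\bigl(\overline g_n^{[n]}-\sum_j b_j\overline g_j^{[n]}\bigr)M$, so, since the left side vanishes and $M\not\equiv0$, the same relation holds for $k=n$ as well.

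The crux is to see that the $b_j$ are $c$-periodic and of hyper-order less than one. Shifting the $n+1$ relations by $c$ gives $\overline g_n^{[m]}=\sum_j\overline b_j\overline g_j^{[m]}$ for $m=1,\ldots,n$; subtracting these from the unshifted relations at the same indices yields $\sum_j(b_j-\overline b_j)\overline g_j^{[m]}=0$ for $m=1,\ldots,n$. The coefficient matrix $(\overline g_j^{[m]})_{m=1,\ldots,n;\,j=0,\ldots,n-1}$ is the Casorati matrix of $g_0,\ldots,g_{n-1}$ evaluated at $z+c$, with determinant $\overline M\not\equiv0$, so $b_j=\overline b_j$ for every $j$; thus each $b_j$ is $c$-periodic and $g_n=\sum_j b_j g_j$ is the desired dependence. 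Finally, writing each $b_j$ by Cramer's rule as a ratio of two $n\times n$ Casorati-type determinants in the $g_i$ and rescaling row $k$ of both by $1/\overline g_0^{[k]}$ expresses $b_j$ as a rational combination of the shifted ratios $\overline{(g_i/g_0)}^{[k]}$; by \eqref{cartanT} each such ratio has hyper-order $\le\varsigma(g)$, and since shifting the argument by a bounded amount does not raise the hyper-order when $\varsigma(g)<1$ (the estimate $T(r,\overline w)=T(r,w)+o(T(r,w))$ recorded in Section~\ref{cartansec}), we obtain $\varsigma(b_j)\le\varsigma(g)<1$, so $b_j\in\mathcal{P}^1_c$. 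The main obstacle is precisely this last pair of steps: extracting genuine periodicity of the Cramer coefficients from the nonvanishing of the shifted Casoratian, and then certifying membership in $\mathcal{P}^1_c$ through the growth hypothesis; the determinantal vanishing itself is elementary linear algebra.
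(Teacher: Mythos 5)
Your proof is correct and follows essentially the same route as the paper's: the forward direction by shifting the relation and using periodicity of the coefficients, and the converse by induction with Cramer's rule on the leading Casorati minor, periodicity of the coefficients extracted from the nonvanishing of the shifted minor (your ``shift and subtract'' is literally the paper's application of $\Delta_c$ with the product rule), and the hyper-order bound via \eqref{cartanT} and the shift-invariance of the characteristic. The only presentational difference is that you justify the redundancy of the last equation explicitly through the column-operation identity $C(g_0,\ldots,g_n)=\bigl(\overline g_n^{[n]}-\sum_j b_j\overline g_j^{[n]}\bigr)C(g_0,\ldots,g_{n-1})$, which the paper merely asserts.
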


Since the periodic functions of $\mathcal{P}^1_c$ are constants with respect to the difference operator $\Delta_c f=f(z+c)-f(z)$, Lemma~\ref{casoratilemma} is a natural difference analogue of the fact that entire functions $f_0,\ldots,f_n$ are linearly dependent over $\C$ if and only if the Wronskian $W(f_0,\ldots,f_n)$ vanishes identically.

\medskip

\noindent\textit{Proof of Lemma~\ref{casoratilemma}: } Suppose first that $g_0,\ldots,g_n$ are linearly dependent over $\mathcal{P}^1_c$. Then there exist $A_0,\ldots,A_n\in\mathcal{P}^1_c$ such that
	$
	A_0g_0+\cdots+A_ng_n=0,
	$
and so
	\begin{equation}\label{system1}
	\left\{\begin{array}{lcl}
	A_0g_0+\cdots+A_ng_n &=&0\\
	A_0\overline{g}_0+\cdots+A_n \overline{g}_n&=&0\\
	&\vdots&\\
	A_0\overline{g}^{[n]}_0+\cdots+A_n \overline{g}^{[n]}_n&=&0.\\
	\end{array}\right.
	\end{equation}
The determinant of the coefficient matrix corresponding to the system \eqref{system1} is the Casoratian $C(g_0,\ldots,g_n)$. Since \eqref{system1} has a nontrivial solution, it follows that $C(g_0,\ldots,g_n)\equiv0$.

We apply induction on $n$ to prove the converse assertion. In the case when $n=1$ suppose that $C(g_0,g_1)\equiv0$, and consider the system of equations
	\begin{equation}\label{system2}
	\left\{\begin{array}{lcl}
	A_0g_0+A_1g_1 &=&0\\
	A_0\overline{g}_0+A_1 \overline{g}_1&=&0\\
	\end{array}\right.
	\end{equation}
which is equivalent to
	\begin{equation*}
	\left\{\begin{array}{lcl}
	A_0g_0+A_1g_1 &=&0\\
	A_1 C(g_0,g_1)&=&0.\\
	\end{array}\right.
	\end{equation*}
Since $C(g_0,g_1)\equiv0$, it follows that $A_0=g_1/g_0$ and $A_1=-1$ is a solution of \eqref{system2}. Moreover, since $\varsigma(g)<1$ by assumption, also $\varsigma(\tilde g)<1$ where $\tilde g =[g_0:g_1]$. Therefore, by \eqref{cartanT}, the usual Nevanlinna hyper-order of $A_0$ satisfies $\varsigma(A_0)=\varsigma(g_1/g_0)\leq \varsigma(\tilde g)\leq \varsigma(g)<1$. Since clearly $A_1\in\mathcal{P}^1_c$, all we need to do to complete the proof in the case $n=1$ is to show that $A_0$ is periodic with period $c$. By applying the difference operator $\Delta_c f=f(z+c)-f(z)$ to the first equation in \eqref{system2}, we have
	\begin{equation}\label{periodic1}
	A_0\Delta_c g_0+\overline g_0 \Delta_c A_0-\Delta_c g_1=0.
	\end{equation}
On the other hand, \eqref{system2} yields
	\begin{equation*}
	A_0\Delta_c g_0-\Delta_c g_1=0,
	\end{equation*}
which, combined with \eqref{periodic1}, implies that $\Delta_c A_0\equiv0$. We conclude that $A_0\in\mathcal{P}^1_c$.

Suppose now that $C(g_0,\ldots,g_j)\equiv0$ implies that $g_0,\ldots,g_j$ are linearly dependent over $\mathcal{P}^1_c$ for all $j\in\{1,\ldots,k-1\}$ where $k\leq n$, and assume that $C(g_0,\ldots,g_k)\equiv0$. Then the linear system
	\begin{equation}\label{system3}
	\left\{\begin{array}{lcl}
	A_0g_0+\cdots+A_kg_k &=&0\\
	A_0\overline{g}_0+\cdots+A_k \overline{g}_k&=&0\\
	&\vdots&\\
	A_0\overline{g}^{[k]}_0+\cdots+A_k \overline{g}^{[k]}_k&=&0\\
	\end{array}\right.
	\end{equation}
has at least one redundant equation and can be written as
	\begin{equation}\label{system4}
	\left\{\begin{array}{lcl}
	A_0g_0+\cdots+A_{k-1}g_{k-1} &=&g_k\\
	A_0\overline{g}_0+\cdots+A_{k-1} \overline{g}_{k-1}&=&\overline{g}_k\\
	&\vdots&\\
	A_0\overline{g}^{[k-1]}_0+\cdots+A_{k-1} \overline{g}^{[k-1]}_{k-1}&=&\overline{g}^{[k-1]}_k\\
	\end{array}\right.
	\end{equation}
where we have made the choice $A_k=-1$. If $C(g_0,\ldots,g_{k-1})\equiv0$, then $g_0,\ldots,g_{k-1}$ (and thus also $g_0,\ldots,g_{k}$) are linearly dependent over $\mathcal{P}^1_c$ by the induction assumption. If $C(g_0,\ldots,g_{k-1})\not\equiv0$, then by Cramer's rule for each $i=0,\ldots,k-1$ we have
	\begin{equation*}
	A_i=\frac{C(g_0,\ldots,g_k,\ldots,g_{k-1})}{C(g_0,\ldots,g_{k-1})},
	\end{equation*}
where $g_k$ occurs in the $i^\textrm{th}$ entry of the Casorati determinant in the numerator instead of $g_i$. By writing $A_i$ in the form
	\begin{equation*}
	A_i=\frac{g_i\overline{g}_i\cdots \overline{g}^{[k-1]}_i C(g_0/g_i,\ldots,g_k/g_i,\ldots,g_{k-1}/g_i)}			 {g_k\overline{g}_k\cdots \overline{g}^{[k-1]}_k C(g_0/g_k,\ldots,g_{k-1}/g_k)}
	\end{equation*}
it can be seen that
	\begin{equation*}
	 T(r,A_i)=O\left(\sum_{j=0}^{k}\sum_{l=0}^{k-1}\left(T\left(r,\frac{\overline{g}^{[l]}_j}{\overline{g}^{[l]}_i}\right)+T\left(r,\frac{\overline{g}^{[l]}_j}{\overline{g}^{[l]}_k}\right)\right)\right)
	\end{equation*}
for all $i=0,\ldots,k-1$. Now, since $T(r,f(z+c))=O(T(r+|c|,f))$ for all functions $f(z)$ meromorphic in the complex plane (see, e.g., \cite[pp.~66--67]{goldbergo:08}) it follows by the assumption $\varsigma(g)<1$ and \eqref{cartanT} that $\varsigma(A_i)<1$ for all $i=0,\ldots,k-1$.

We still need to prove that $A_i$ is periodic with period $c$ for all $i=0,\ldots,k-1$ ($A_k\equiv -1$ and so it is trivially periodic). By applying the operator $\Delta_c$ to all equations in the system \eqref{system4}, it follows that
	\begin{equation}\label{system5}
	\left\{\begin{array}{lcl}
	A_0\Delta_c g_0+\cdots+A_{k-1} \Delta_c g_{k-1} + \overline{g}_0 \Delta_c A_0 + \cdots + \overline{g}_{k-1} \Delta_c A_{k-1}  &=& \Delta_c g_k\\
	A_0\Delta_c \overline{g}_0+\cdots+A_{k-1} \Delta_c \overline{g}_{k-1} + \overline{\overline{g}}_0 \Delta_c A_0 + \cdots + \overline{\overline{g}}_{k-1} \Delta_c A_{k-1}  &=& \Delta_c \overline{g}_k\\
	&\vdots&\\
	A_0\Delta_c \overline{g}^{[k-1]}_0+\cdots+A_{k-1} \Delta_c \overline{g}^{[k-1]}_{k-1} + \overline{g}^{[k]}_0 \Delta_c A_0 + \cdots + \overline{g}^{[k]}_{k-1} \Delta_c A_{k-1}  &=& \Delta_c \overline{g}^{[k-1]}_k.\\
	\end{array}\right.
	\end{equation}
On the other hand, from \eqref{system3}, we obtain
	\begin{equation}\label{system6}
	\left\{\begin{array}{lcl}
	A_0\Delta_c g_0+\cdots+A_{k-1} \Delta_c g_{k-1}   &=& \Delta_c g_k\\
	A_0\Delta_c \overline{g}_0+\cdots+A_{k-1} \Delta_c \overline{g}_{k-1}   &=& \Delta_c \overline{g}_k\\
	&\vdots&\\
	A_0\Delta_c \overline{g}^{[k-1]}_0+\cdots+A_{k-1} \Delta_c \overline{g}^{[k-1]}_{k-1}   &=& \Delta_c \overline{g}^{[k-1]}_k.\\
	\end{array}\right.
	\end{equation}
By combining \eqref{system5} and \eqref{system6} we finally obtain
	\begin{equation}\label{system7}
	\left\{\begin{array}{lcl}
	 \overline{g}_0 \Delta_c A_0 + \cdots + \overline{g}_{k-1} \Delta_c A_{k-1}  &=& 0\\
	 \overline{\overline{g}}_0 \Delta_c A_0 + \cdots + \overline{\overline{g}}_{k-1} \Delta_c A_{k-1}  &=& 0\\
	&\vdots&\\
	 \overline{g}^{[k]}_0 \Delta_c A_0 + \cdots + \overline{g}^{[k]}_{k-1} \Delta_c A_{k-1}  &=& 0\\
	\end{array}\right.
	\end{equation}
which has only the trivial solution if $C(g_0,\ldots,g_{k-1})\not\equiv0$. Therefore $\Delta_c A_0\equiv \cdots\equiv \Delta_c A_{k-1}\equiv 0$, and so $A_i\in\mathcal{P}^1_c$ for all $i=0,\ldots,k-1$. \hfill$\Box$

\begin{lemma}\label{linindepP}
Let $c\in\C$, and let $g=[g_0:\cdots:g_n]$ be a holomorphic curve such that $\varsigma(g)<1$ and such that all zeros of $g_0,\ldots,g_n$ are forward invariant with respect to the translation $\tau(z)=z+c$. If $g_i/g_j\not\in \mathcal{P}^1_c$ for all $i,j\in\{0,\ldots,n\}$ such that $i\not=j$, then $g_0,\ldots,g_n$ are linearly independent over $\mathcal{P}^1_c$.
\end{lemma}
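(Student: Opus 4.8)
The plan is to argue by contradiction, reducing to a \emph{minimal} linear dependence over $\mathcal{P}^1_c$ and then using the forward invariance of the zeros together with the growth hypothesis to rule out any such dependence of length greater than two. Suppose $g_0,\dots,g_n$ were linearly dependent over $\mathcal{P}^1_c$, and choose a dependence involving as few of the $g_i$ as possible. After reindexing this reads
\begin{equation*}
A_0 g_0 + \cdots + A_m g_m = 0, \qquad A_i \in \mathcal{P}^1_c, \quad A_i \not\equiv 0,
\end{equation*}
where $1\le m\le n$ and, by minimality, any $m$ of the functions $g_0,\dots,g_m$ are linearly independent over $\mathcal{P}^1_c$. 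If $m=1$ then $g_0/g_1=-A_1/A_0\in\mathcal{P}^1_c$, contradicting the pairwise hypothesis, so it suffices to derive a contradiction when $m\ge 2$.

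First I would translate the forward invariance into analytic form. Since the shift $\tau(z)=z+c$ can only raise the multiplicity of each zero of $g_i$, the quotient $e_i:=\overline{g}_i/g_i$ is entire for every $i$. Applying $\tau$ to the minimal relation and using $\overline{A}_i=A_i$ gives $\sum_{i=0}^m A_i\overline{g}_i=0$; subtracting $e_0$ times the original relation removes the $i=0$ term and leaves
\begin{equation*}
\sum_{i=1}^m A_i(e_i-e_0)\,g_i=0,
\end{equation*}
a relation among $g_1,\dots,g_m$, which are linearly independent over $\mathcal{P}^1_c$ by minimality. If the coefficients $A_i(e_i-e_0)$ happened to lie in $\mathcal{P}^1_c$, independence would force them all to vanish; since each $A_i\not\equiv0$ this would give $e_i=e_0$, i.e. $\overline{(g_i/g_0)}=g_i/g_0$, so that $g_i/g_0\in\mathcal{P}^1_c$, the desired contradiction. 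The whole difficulty is therefore concentrated in the fact that $e_i-e_0$ is only known to be entire, not periodic.

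The hard part will be to show that this reduced relation is nonetheless trivial, and this is where $\varsigma(g)<1$ enters. By the difference analogue of the lemma on the logarithmic derivative (Theorem~\ref{logder}) one bounds $m(r,\overline{g}_i/g_i)$, and since forward invariance makes $e_i$ entire one obtains $T(r,e_i)=o\!\left(T_g(r)/r^{1-\varsigma-\varepsilon}\right)$ off a set of finite logarithmic measure; thus the coefficients $A_i(e_i-e_0)$ are \emph{small} relative to $T_g$. One cannot conclude vanishing from $\mathcal{P}^1_c$-independence alone, since over larger fields (for instance the meromorphic functions of hyper-order $<1$) the $g_i$ are generally dependent, so the periodicity of the field must be exploited through this smallness. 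I would close the argument either by iterating the shift--subtract step to shorten the relation while keeping its coefficients small, or, equivalently, by feeding the independent subcurve $[g_1:\cdots:g_m]$ into the Casorati criterion of Lemma~\ref{casoratilemma}: forward invariance yields the factorisation $C(g_0,\dots,g_{m-1})=(g_0\cdots g_{m-1})\,D$ with $D$ entire, and a counting estimate for the zeros of $D$ against $T_g$, valid only because $\varsigma(g)<1$, is incompatible with the length-$m$ relation. The essential obstacle, and the precise place where the hypothesis $\varsigma<1$ is indispensable, is exactly this passage from a small, non-periodic coefficient relation among $\mathcal{P}^1_c$-independent functions to the vanishing of its coefficients.
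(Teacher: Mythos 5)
There is a genuine gap, and it sits exactly where you say the ``whole difficulty'' is: you identify the hard step but do not carry it out, and the one quantitative tool you invoke to attack it does not apply. You propose to control the coefficients $A_i(e_i-e_0)$ by applying Theorem~\ref{logder} to $e_i=\overline{g}_i/g_i$, claiming $T(r,e_i)=o\bigl(T_g(r)/r^{1-\varsigma-\varepsilon}\bigr)$. But Theorem~\ref{logder} requires the \emph{individual} function $g_i$ to have hyper-order less than one, and the paper explicitly warns (citing Bergweiler) that the coordinate functions of a reduced representation may have hyper-order $\geq 1$ even when the curve satisfies $\varsigma(g)<1$. The hypothesis $\varsigma(g)<1$ controls quotients $g_i/g_j$ via \eqref{cartanT}, not the entire functions $g_i$ themselves, so your smallness estimate for $e_i$ is unjustified. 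Beyond that, even granting smallness, you concede that $\mathcal{P}^1_c$-independence of $g_1,\dots,g_m$ does not force the non-periodic coefficients to vanish, and you then offer two unexecuted alternatives (iterate shift-and-subtract, or a ``counting estimate for the zeros of $D$ against $T_g$'') without showing that either one closes the argument. Iterating shift-and-subtract just reproduces the vanishing of a Casoratian-type expression and leads back to Lemma~\ref{casoratilemma}, which is circular here; the counting argument is the real content and is not supplied.

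The paper's proof fills precisely this hole by running the whole argument through the difference second main theorem (Theorem~\ref{casorati}). From a dependence $A_0g_0+\cdots+A_{n-1}g_{n-1}=A_ng_n$ it clears denominators with an auxiliary meromorphic $F$ so that $FA_0g_0,\dots,FA_ng_n$ are entire, without common zeros, and still have forward invariant zero preimages; it verifies that the curve $G=[FA_0g_0:\cdots:FA_{n-1}g_{n-1}]$ has $\varsigma(G)<1$; and it applies Theorem~\ref{casorati} to $G$ with the $n+1$ targets $FA_jg_j$. Forward invariance then forces every zero of every $FA_jg_j$ to be a zero of the Casorati determinant with at least the same multiplicity, so the counting functions on the right of the second main theorem are absorbed into $-N(r,1/C)$, yielding $T_G(r)=O(1)$ and hence a strictly shorter dependence; iterating descends to a two-term relation $g_i/g_j\in\mathcal{P}^1_c$. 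If you want to rescue your approach you would need to reproduce this mechanism: the smallness of the error terms must come from applying the logarithmic difference lemma to quotients $f_j/f_k$ (controlled by $T_g$ via \eqref{cartanT}), inside the proof of the second main theorem, not to the coordinates $g_i$ directly.
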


\begin{proof}
We will show that if $g_0,\ldots,g_n$ are linearly dependent over $\mathcal{P}^1_c$, then it follows that there exist $i,j\in\{0,\ldots,n\}$, $i\not=j$, such that $g_i/g_j\in \mathcal{P}^1_c$. Towards that end, suppose that $A_0,\ldots,A_n\in\mathcal{P}^1_c$ such that
	\begin{equation}\label{lindepAg}
	A_0g_0+\cdots + A_{n-1}g_{n-1}=A_n g_n
	\end{equation}
and not all $A_j$ are identically zero. Without loss of generality we may assume that none of the functions $A_j$ are identically zero. From the assumptions of the lemma and from the fact that $A_0,\ldots,A_n$ are periodic, it follows that there exists a meromorphic function $F(z)$ such that $FA_0g_0,\ldots,FA_ng_n$ are entire functions without common zeros and such that the preimages of all zeros of $FA_0g_0,\ldots,FA_ng_n$ are forward invariant with respect to the translation $\tau(z)$. Moreover, since $A_0,\ldots,A_n\in\mathcal{P}^1_c$, the function $F(z)$ satisfies
	\begin{equation}\label{NH1H}
	\limsup_{r\to\infty}\frac{\log^+\log^+ \left(N(r,F)+N(r,1/F)\right)}{\log r}<1
	\end{equation}
(but in general the hyper-order of $F$ might not be less than $1$). We define $G=[FA_0g_0:\cdots :FA_{n-1}g_{n-1}]$. Since $FA_0g_0,\ldots,FA_ng_n$ do not have any common zeros, it follows from \eqref{lindepAg} that $FA_0g_0,\ldots,FA_{n-1}g_{n-1}$ cannot have any common zeros either. Therefore the $T_G(r)$ is well defined. Furthermore
	\begin{equation}\label{TGtg}
	\begin{split}
	T_G(r)&=\int_0^{2\pi}\sup_{k\in \{0,\ldots,n-1\}} \log|FA_kg_k(re^{i\theta})|\frac{d\theta}{2\pi}+O(1)\\ &= \int_0^{2\pi}\sup_{k\in \{0,\ldots,n-1\}} (\log|g_k(re^{i\theta})|+\log|A_k(re^{i\theta})|)\frac{d\theta}{2\pi}\\&\quad+\int_0^{2\pi}\log|F(re^{i\theta})|\frac{d\theta}{2\pi}+O(1)\\ &\leq T_g(r)+\sum_{j=0}^{n-1} m(r,A_j) +\int_0^{2\pi}\log|F(re^{i\theta})|\frac{d\theta}{2\pi}+O(1).\\
  	\end{split}
	\end{equation}
Since Poisson-Jensen formula implies that
	\begin{equation}\label{PJformula}
	\int_0^{2\pi}\log|F(re^{i\theta})|\frac{d\theta}{2\pi}=N\left(r,\frac{1}{F}\right)-N(r,F)+O(1),
	\end{equation}
and since $A_0,\ldots,A_{n-1}\in\mathcal{P}^1_c$, it follows by combining \eqref{NH1H} and \eqref{TGtg} that $\varsigma(G)<1$. Suppose that $FA_0g_0,\ldots,FA_{n-1}g_{n-1}$ are linearly independent over $\mathcal{P}^1_c$. Then, $C(FA_0g_0,\ldots, FA_{n-1}g_{n-1})\not\equiv0$ by Lemma~\ref{casoratilemma}, and so Theorem~\ref{casorati} applied with $G$ and $FA_0g_0,\ldots,FA_ng_n$ yields
    \begin{equation}\label{thth}
    \begin{split}
    T_G(r) &\leq \sum_{j=0}^{n-1} N\left(r,1/FA_jg_j\right)
    + N\left(r,1/FA_ng_n\right)\\&\quad - N\left(r,1/C(FA_0g_0,\ldots, FA_{n-1}g_{n-1})\right)
    +o(T_G(r))+O(1)
    \end{split}
    \end{equation}
for all $r$ outside of an exceptional set of finite logarithmic
measure.

Since the preimages of all zeros of $FA_0g_0,\ldots,FA_ng_n$ are forward invariant with respect to $\tau(z)$, all zeros of
$FA_jg_j$, $j=0,\ldots,n-1$, are zeros of the Casorati determinant
$C(FA_0g_0,\ldots,FA_{n-1}g_{n-1})$ with the same or higher multiplicity.
Moreover, since $FA_0g_0,\ldots,FA_ng_n$ do not have any common zeros, it follows in particular that for each $z_0\in\C$ such that $FA_ng_n(z_0)=0$ with multiplicity $m_0$ there exist $k_0\in\{0,\ldots,n-1\}$ such that $FA_{k_0}g_{k_0}(z_0)\not=0$. Using \eqref{lindepAg} we may write
	\begin{equation*}
	\begin{split}
	&C(FA_0g_0,\ldots,FA_{n-1}g_{n-1})\\&\quad=C(FA_0g_0,
	\ldots,FA_{k_0-1}g_{k_0-1},FA_{n}g_{n},FA_{k_0+1}g_{k_0+1},
	\ldots,FA_{n-1}g_{n-1})
	\end{split}
	\end{equation*}	
which implies that $C(FA_0g_0,\ldots,FA_{n-1}g_{n-1})$ has a zero at $z_0$ with multiplicity $m_0$ at least. Also, at any common
zero of the functions $FA_{j_k}g_{j_k}$ with multiplicities $m_{j_k}$, $k=1,\ldots,\ell$, where $\{j_1,\ldots,j_\ell\}\subset\{0,\ldots,n-1\}$ and $\ell\leq n-2$, the Casorati determinant $C(FA_0g_0,\ldots,FA_{n-1}g_{n-1})$ has a zero of multiplicity $\geq \sum_{k=1}^\ell m_{j_k}$. Therefore,
    \begin{equation*}
    \sum_{j=0}^{n-1} N\left(r,1/FA_jg_j\right)
    + N\left(r,1/FA_ng_n\right)\leq
    N\left(r,1/C(FA_0g_0,\ldots,FA_{n-1}g_{n-1})\right),
    \end{equation*}
and so inequality \eqref{thth} yields $T_G(r)=O(1)$. But this is only possible when $G$ is a constant curve, which implies that $g_0,\ldots,g_{n-1}$ (and so also $FA_0g_0,\ldots,FA_{n-1}g_{n-1}$) are linearly dependent over $\mathcal{P}^1_c$. Therefore there exist $B_0,\ldots,B_{n-1}\in\mathcal{P}^1_c$ such that
	\begin{equation*}
	B_0g_0+\cdots + B_{n-2}g_{n-2}=B_{n-1} g_{n-1},
	\end{equation*}
where not all $B_j$ are identically zero. By continuing in this fashion it follows after at most $n-2$ iterations of the above reasoning that $g_i/g_j\in\mathcal{P}_c^1$ for some $i\not=j$.
\end{proof}

\noindent\textit{Proof of Theorem~\ref{borelanalogue}: } Using the fact that $g_i=A_{i,j_k}g_{j_k}$ for some $A_{i,j_k}\in\mathcal{P}_c^1$ whenever the indexes $i$ and $j_k$ are in the same class $S_{k}$, equation \eqref{boreleq} may be written as
	$$
	\sum_{k=0}^n g_k = \sum_{k=1}^l \sum_{i\in S_k} A_{i,j_k} g_{j_k}=\sum_{k=1}^l B_k g_{j_k}=0,
	$$
where $B_k= \sum_{i\in S_k} A_{i,j_k}$. By Lemma~\ref{linindepP} $B_k\equiv0$ for all $k=1,\ldots,l$, and so
	$$
	\sum_{i\in S_k} g_i = \sum_{i\in S_k} A_{i,j_k} g_{j_k} = B_k g_{j_k} =0
	$$	
for all $k=1,\ldots,l$. \hfill$\Box$

\section{Applications to the uniqueness of meromorphic
functions}\label{uniquenesssec}

Nevanlinna has shown that if two non-constant meromorphic functions $f$ and $g$
share five distinct values ignoring multiplicities (IM), then $f\equiv g$.
Similarly, if $f$ and $g$ share four values counting
multiplicities (CM), then there exists a M\"obius transformation $T$ such that $f\equiv T\circ g$.
These results are known as Nevanlinna's five and four values theorems,
respectively \cite{nevanlinna:29}. Gundersen has proved that the assertion of the four values theorem remains valid when
it is assumed that two values are shared IM and two are shared CM \cite{gundersen:83}. He has also given a counterexample
which demonstrates that in general this assumption cannot be further weakened to 4 IM \cite{gundersen:79}. The case where one value is shared CM and three values IM is still open.

A difference analogue of the five value theorem states that if instead of multiplicities we ignore those
values which have forward invariant preimages, then either $f\equiv g$ or both $f$ and $g$ are periodic
functions \cite{halburdk:06AASFM}. In this section we apply Lemma~\ref{linindepP} to show that the assumption 4 CM can be weakened to a difference analogue of the  4 IM assumption for meromorphic functions of hyper-order strictly less that one.

We denote by $\mathcal{S}(f)$ the set of all  meromorphic
functions $a$ such that $T(r,a)=o(T(r,f))$ where $r$ approaches
infinity outside of a set of finite logarithmic measure. Functions
in the set $\mathcal{S}(f)$ are called \textit{small compared to}
$f$, or \textit{slowly moving} with respect to $f$. Moreover, we say that two meromorphic functions $f$ and $g$ share a periodic function $a\in\mathcal{P}^1_c\setminus\{\infty\}$,
\textit{ignoring} $c$-\textit{separated pairs} (IcP), when for all $z\in\C$ exactly one of the following
assertions is valid for the ratio $r(z):=\{f(z)-a(z)\}/\{g(z)-a(z)\}$.
    \begin{itemize}
    \item[(i)] $r(z)$ is regular and does not vanish,
    \item[(ii)] $r(z)$ vanishes but $r(z+c)/r(z)$ is regular,
    \item[(iii)] $r(z)$ has a pole but $r(z)/r(z+c)$ is regular.
    \end{itemize}
We say also that two non-constant meromorphic functions $f$ and $g$ share
the constant function $a(z)\equiv \infty$ IcP, if their reciprocals $1/f$ and $1/g$ share the constant~$0$ IcP,
that is, if for all $z\in\C$ we have exactly one of the followings.
    \begin{itemize}
    \item[(i)] both $f(z)$ and $g(z)$ are regular,
    \item[(ii)] $f(z)$ is not regular but $f(z)g(z+c)/f(z+c)g(z)$ is regular,
    \item[(iii)] $g(z)$ is not regular but $f(z+c)g(z)/f(z)g(z+c)$ is regular.
    \end{itemize}
The following theorem is a difference analogue of the four value
theorem where 4~CM has been replaced by 4 IcP.

\begin{theorem}\label{uniqueness}
Let $c\in\C\setminus\{0\}$, and let $f$ and $g$ be meromorphic functions such that $\max\{\varsigma(f),\varsigma(g)\}<1$. If $f$ and $g$ share the distinct functions $a_1,a_2,a_3,a_4\in \mathcal{P}^1_c$ IcP, then
    \begin{equation}\label{mobius}
    f=\frac{Ag+B}{Cg+D},
    \end{equation}
where $A,B,C,D\in\mathcal{P}_c^1$.
\end{theorem}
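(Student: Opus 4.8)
The plan is to mimic the classical proof of Nevanlinna's four values theorem, replacing Borel's theorem by its difference analogue, Theorem~\ref{borelanalogue}. First I would normalise the shared functions. Since $a_1,a_2,a_3$ are distinct elements of $\mathcal{P}^1_c\cup\{\infty\}$, there is a unique Möbius transformation $T$ with coefficients in the field $\mathcal{P}^1_c$ sending $(a_1,a_2,a_3)$ to $(0,\infty,1)$; replacing $f$ and $g$ by $T\circ f$ and $T\circ g$ changes neither the hyper-orders (composition with a $\mathcal{P}^1_c$-Möbius leaves $\varsigma<1$ intact, by \eqref{cartanT}) nor the IcP-sharing, and a relation of the form \eqref{mobius} for the transformed pair pulls back to one for the original pair. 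Hence I may assume $a_1=0$, $a_2=\infty$, $a_3=1$ and $a_4=a\in\mathcal{P}^1_c$ with $a\not\equiv0,1$. The key observation is that ``$f$ and $g$ share $a_i$ IcP'' is a divisor statement: the zero divisor and the pole divisor of each associated ratio are forward invariant with respect to $\tau(z)=z+c$, since condition (ii) says exactly that a zero of the ratio at $z_0$ forces a zero of at least the same order at $z_0+c$, and (iii) says the same for poles.

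Next I would introduce the auxiliary functions $u_1=f/g$, $u_2=(f-1)/(g-1)$ and $u_3=(f-a)/(g-a)$, each meromorphic with $\varsigma(u_j)\le\max\{\varsigma(f),\varsigma(g)\}<1$ by \eqref{cartanT}, and each with forward invariant zero and pole divisors by the translation of IcP above. Eliminating $f$ and $g$ from $f=u_1g$, $f-1=u_2(g-1)$ and $f-a=u_3(g-a)$ yields the single identity
    \begin{equation*}
    (1-a)u_1+au_2-u_3-u_1u_2+au_1u_3+(1-a)u_2u_3=0,
    \end{equation*}
a linear relation over $\mathcal{P}^1_c$ among the six functions $u_1,u_2,u_3,u_1u_2,u_1u_3,u_2u_3$. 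Because a product of functions with forward invariant zero divisors again has a forward invariant zero divisor, and likewise for the pole divisors, I can clear all poles by multiplying through by one suitable meromorphic factor $F$ (exactly as in the proof of Lemma~\ref{linindepP}, where $N(r,F)+N(r,1/F)$ is controlled through \eqref{NH1H}), obtaining entire functions $G_1,\dots,G_6$ that form a holomorphic curve $G=[G_1:\cdots:G_6]$ of hyper-order $<1$ whose zero preimages are forward invariant and which satisfy $G_1+\cdots+G_6=0$ once the $\mathcal{P}^1_c$-coefficients $a$ and $1-a$ are absorbed into the $G_i$.

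Now I would apply Theorem~\ref{borelanalogue} to this relation. It produces a partition of $\{1,\dots,6\}$ into classes, two indices lying in the same class exactly when the corresponding monomials have quotient in $\mathcal{P}^1_c$, with the sub-sum over each class vanishing. No class can be a singleton, for that would force a nonzero monomial to vanish identically; hence every index has a partner, i.e.\ there is always at least one pair of monomials whose quotient lies in $\mathcal{P}^1_c$. Running through the possible quotients, such a relation is one of $u_i\in\mathcal{P}^1_c$, $u_i/u_j\in\mathcal{P}^1_c$, or a triple relation such as $u_1/(u_2u_3)\in\mathcal{P}^1_c$; in the last case the corresponding vanishing sub-sums give $u_1=-u_2u_3$, $u_2=-u_1u_3$ and $u_3=-u_1u_2$, which combine to $u_3^2\equiv1$, so that $u_3\in\mathcal{P}^1_c$. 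Thus in every case either some $u_i$ or some quotient $u_i/u_j$ belongs to $\mathcal{P}^1_c$.

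Finally, each such relation, upon solving $f=u_1g$, $f-1=u_2(g-1)$, $f-a=u_3(g-a)$ for $f$ in terms of $g$, is a Möbius relation over $\mathcal{P}^1_c$: for instance $u_1\in\mathcal{P}^1_c$ gives $f=u_1g$ directly, while $u_1/u_2=\lambda\in\mathcal{P}^1_c$ gives $f=\lambda g/(1-(1-\lambda)g)$, and the remaining cases are analogous, all of the form \eqref{mobius} with $A,B,C,D\in\mathcal{P}^1_c$; the trivial case $f\equiv g$ is already of this form. The main obstacle, and the step requiring the most care, is the second paragraph: turning the IcP hypotheses into a single holomorphic curve of hyper-order $<1$ with entire coordinates and forward invariant zero preimages, so that Theorem~\ref{borelanalogue} genuinely applies. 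This is where the definition of IcP-sharing (including the treatment of $\infty$ through reciprocals) must be used to show that the common pole-clearing factor $F$ exists and satisfies \eqref{NH1H}, and that forward invariance survives multiplication by the periodic coefficients $a$ and $1-a$.
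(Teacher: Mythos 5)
Your outline coincides with the paper's proof in all its main moves: the quasi-M\"obius normalisation to the shared values $0,1,a,\infty$, the six-term identity (yours is exactly \eqref{pieq} divided through by $\kappa_1\kappa_2\kappa_3$), the reduction to a linear dependence over $\mathcal{P}^1_c$ among six monomials, the observation that some ratio of two monomials must lie in $\mathcal{P}^1_c$, the fifteen-case analysis in which all but the three ``complementary'' pairs give \eqref{mobius} immediately, and the terminal dichotomy $f=g$ or $f=-g+2a$. Two cosmetic differences: the paper invokes Lemma~\ref{linindepP} directly rather than Theorem~\ref{borelanalogue} (the latter is derived from the former, so this is interchangeable), and it disposes of the three exceptional pairs by iterating the reduction on \eqref{exceptioneq}--\eqref{exceptioneq3}, whereas you use the full partition with vanishing sub-sums to land on $u_3^2\equiv1$; your version of that last step is, if anything, tidier.

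The one place where your argument is genuinely incomplete is precisely the step you flag as ``the main obstacle,'' and it is worth knowing how the paper fills it: before forming the six monomials, the paper applies Miles' quotient representation theorem \cite{miles:72} to write each ratio as $\pi_j/\kappa_j$ in \eqref{pi123}, with $\pi_j,\kappa_j$ \emph{entire}, of hyper-order $<1$, and with forward invariant zero preimages (common zeros of $\pi_j$ and $\kappa_j$ being explicitly permitted). This is not a formality: the reduced numerator and denominator of a meromorphic function of hyper-order $<1$ need not themselves have hyper-order $<1$ (cf.\ the remark around \cite{bergweiler:94} in section 2), so one cannot simply take the canonical zero/pole factorisation of your $u_j$. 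With the $\pi_j,\kappa_j$ in hand, the six functions in \eqref{gs} are already entire with forward invariant zero preimages, and the auxiliary factor $F$ only has to absorb the poles and zeros of the periodic coefficient $a$ and the common zeros, which is the situation already controlled via \eqref{NH1H}. In your version $F$ must additionally clear the pole divisors of the $u_j$ themselves and one must verify that dividing out the common zero divisor of the six cleared functions does not destroy forward invariance (subtracting a common forward invariant divisor from a forward invariant divisor can fail to be forward invariant). Neither of these is established in your sketch, so as written the proof has a gap exactly where the paper cites Miles; supplying that citation (or reproving its conclusion) closes it.
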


Note that the functions $a_1,a_2,a_3,a_4$ need not be small
compared to $f$ or $g$. The following example shows that the
transformation \eqref{mobius} cannot be replaced by the identity
$f=g$.

\begin{example}
Denote by $\textrm{sn}(z,k)\equiv\textrm{sn}(z)$ the elliptic
function with the elliptic modulus $k\in(0,1)$ and the complete
elliptic integral $K$. The function $\textrm{sn}(z)$ is periodic
with the periods $4K$ and $2iK'$, and it attains the value zero at
points $2nK+2miK'$ and has its poles at $2nK+(2m+1)iK'$, where
$n,m\in\Z$. Therefore the meromorphic functions
    \begin{equation*}
    f(z)=\frac{\cos^2 (\pi z/K) +\sin^2 (\pi z/K)\, \sn(z)}{\cos
    (\pi z/K) +\sin (\pi z/K)\, \sn (z)}
    \end{equation*}
and
    \begin{equation*}
    g(z)=\frac{\cos^2 (\pi
    z/K)\, \sn (z) +  \sin^2 (\pi z/K) }{\cos (\pi z/K)\, \sn (z)
    +\sin (\pi z/K)}
    \end{equation*}
share four small periodic functions $\sin (\pi z/K)$, $\cos (\pi
z/K)$,
$$
\cos (\pi z/K) + \sin (\pi z/K) \quad \mbox{and} \quad
\frac{1}{\cos (\pi z/K) + \sin (\pi z/K)}
$$
ignoring $2K$-separated pairs.
\end{example}

\noindent\textit{Proof of Theorem \ref{uniqueness}: } By using a M\"obius
transformation over the field $\mathcal{P}_c^1$ (i.e. a so called quasi-M\"obius
transformation), if necessary, we may assume that $f$ and $g$
share $0,1,a,\infty$ IcP, where
$a\in\mathcal{P}_c^1\setminus\{0,1,\infty\}$. By \cite[Theorem 1]{miles:72}, there exist entire functions $\pi_j$ and $\kappa_j$ such that
    \begin{equation}\label{pi123}
    \frac{f}{g}=\frac{\pi_1}{\kappa_1},\quad
    \frac{f-1}{g-1}=\frac{\pi_2}{\kappa_2},\quad\frac{f-a}{g-a}=\frac{\pi_3}{\kappa_3},
    \end{equation}
where $\varsigma(\pi_j)<1$ and $\varsigma(\kappa_j)<1$ for $j=1,2,3$. Note that in general the functions $\pi_j$ and $\kappa_j$ may have common zeros for $j=1,2,3$. From \eqref{pi123} it follows that
    \begin{equation}\label{pieq}
    (1-a)\pi_1\kappa_2\kappa_3 +a\pi_2\kappa_1\kappa_3 -
    \pi_1\pi_2\kappa_3=\pi_3\kappa_1\kappa_2
    -a\pi_1\pi_3\kappa_2+(a-1)\pi_2\pi_3\kappa_1,
    \end{equation}
and so, by denoting
    \begin{equation}\label{gs}
    \begin{split}
    g_0&:=(1-a)\pi_1\kappa_2\kappa_3, \quad
    g_1:=a\pi_2\kappa_1\kappa_3,\quad
    g_2:= - \pi_1\pi_2\kappa_3, \\
    g_3&:=-\pi_3\kappa_1\kappa_2, \quad
    g_4:=a\pi_1\pi_3\kappa_2, \quad
    g_5:=(a-1)\pi_2\pi_3\kappa_1,
    \end{split}
    \end{equation}
it follows that
    \begin{equation}\label{g6}
    g_5=g_0+g_1+g_2+g_3+g_4.
    \end{equation}

Let $F(z)$ be a meromorphic function such that $Fg_0,\ldots,Fg_5$ are entire functions without common zeros. We will now show that the zeros of $Fg_0,\ldots,Fg_5$ are forward invariant with respect to the translation $z\mapsto z+c$. We will present a detailed proof of this fact only for the function $Fg_0=F(1-a)\pi_1\kappa_2\kappa_3$, since the functions $Fg_1,\ldots,Fg_5$ can be treated in an identical fashion. Suppose that $Fg_0$ has a zero of multiplicity $n$ at $z=z_0$. Since $Fg_0,\ldots,Fg_5$ are entire and they do not have any common zeros, it follows that the only possible zeros of $F$ are at the poles of $a$. Since $a$ is periodic, it follows that if $F(z_0)=0$ then $F(z_0+c)=0$ with the same multiplicity. Similarly, if $z_0$ is a zero of $a-1$, then there is a corresponding zero of $a-1$ at $z_0+c$, since $a$ is periodic. Assume that $z_0$ is a zero of $\pi_1$, and let $m\geq0$ be multiplicity of the (possible) zero of $\kappa_1$ at $z=z_0$. It follows by \eqref{gs} that $g_0,\ldots,g_5$ have a common zero of multiplicity at least $\min\{m,n\}$ arising from the common zero of $\pi_1$ and $\kappa_1$ at $z=z_0$. By the choice of $F$, it follows that $F$ has a pole at $z_0$, at least of order $\min\{m,n\}$, corresponding to the common zero of $\pi_1$ and $\kappa_1$. Combining the pole of $F$ with the zero of $\pi_1$, we obtain a zero of multiplicity $\max\{0,m-n\}$. On the other hand, $\pi_1$ has a zero of multiplicity $\max\{0,m-n\}$, at least, at $z=z_0+c$, since $f$ and $g$ share $0$ and $\infty$ IcP. The cases where $z_0$ is a zero of $\kappa_2$ or $\kappa_3$ can be treated similarly. Note that it may happen that some, or all, of the factors of $g_0=(1-a)\pi_1\kappa_2\kappa_3$ have a zero at $z=z_0$ simultaneously. This does not cause a difficulty, however, since these zeros will have a cumulative effect on order of the pole of $F$ at $z_0$.

Furthermore, similarly as in \eqref{NH1H}, \eqref{TGtg} and \eqref{PJformula}, it follows that the holomorphic curve $G=[Fg_0:\cdots:Fg_5]$ satisfies $\varsigma(G)<1$. Therefore, since $Fg_0,\ldots,Fg_5$ are linearly dependent by \eqref{g6}, it follows by Lemma~\ref{linindepP} that there exists a $\beta\in\mathcal{P}_c^1$ such that $g_k=\beta g_\ell$ for some $\ell,k\in\{0,\ldots,5\}$ where $\ell\not=k$. We may assume without a loss of
generality that $\ell >k$, and therefore by recalling the
definition \eqref{gs} it follows that $\beta$ is one of the
functions
    \begin{equation}\label{B}
    \begin{split}
    &\frac{g_0}{g_1}=\frac{(1-a)\pi_1\kappa_2}{a\pi_2\kappa_1},\quad
    \frac{g_0}{g_2}=\frac{(a-1)\kappa_2}{\pi_2}
    ,\quad \frac{g_0}{g_3}=\frac{(a-1)\pi_1\kappa_3}{\pi_3\kappa_1},\\
    &\frac{g_0}{g_4}=\frac{(1-a)\kappa_3}{a\pi_3},\quad \frac{g_0}{g_5}=-\frac{\pi_1\kappa_2\kappa_3}{\pi_2\pi_3\kappa_1},
    \quad\frac{g_1}{g_2}=-\frac{a\kappa_1}{\pi_1},
    \quad\frac{g_1}{g_3}=-\frac{a\pi_2\kappa_3}{\pi_3\kappa_2},\\
    & \frac{g_1}{g_4}=\frac{\pi_2\kappa_1\kappa_3}{\pi_1\pi_3\kappa_2},\quad \frac{g_1}{g_5}=\frac{a\kappa_3}{(a-1)\pi_3},
    \quad\frac{g_2}{g_3}=\frac{\pi_1\pi_2\kappa_3}{\pi_3\kappa_1\kappa_2},
    \quad \frac{g_2}{g_4}=-\frac{\pi_2\kappa_3}{a\pi_3\kappa_2}, \\ &\frac{g_2}{g_5}=\frac{
    \pi_1\kappa_3}{(1-a)\pi_3\kappa_1},\quad\frac{g_3}{g_4}=-\frac{\kappa_1}{a\pi_1},
    \quad \frac{g_3}{g_5}=\frac{\kappa_2}{(1-a)\pi_2},\quad
    \frac{g_4}{g_5}=\frac{a\pi_1\kappa_2}{(a-1)\pi_2\kappa_1}.
    \end{split}
    \end{equation}
Substituting \eqref{pi123} into \eqref{B} yields the desired
quasi-M\"obius transformation \eqref{mobius} in all cases
$\beta=g_k/g_\ell$ where $(k,\ell)$ is not one of the pairs
$(0,5)$, $(1,4)$ and $(2,3)$.

In order to deal with the remaining
cases, suppose first that $\beta=g_0/g_5$. Then, by \eqref{g6} it
follows that
    \begin{equation}\label{exceptioneq}
    (1-\beta)g_5=g_1+g_2+g_3+g_4.
    \end{equation}
By applying the early part of the proof to equation
\eqref{exceptioneq} instead of \eqref{g6}, it follows that
$g_{\tilde k}=\zeta g_{\tilde\ell}$ for some meromorphic function
$\zeta\in\mathcal{P}^1_c$ and distinct indexes $\tilde
k,\tilde\ell\in\{1,\ldots,5\}$. If $(\tilde k,\tilde\ell)$ is
neither $(1,4)$, nor $(2,3)$ then we are lead to one of the
quasi-M\"obius cases of \eqref{B}. Assume therefore that $(\tilde
k,\tilde\ell)=(1,4)$, which takes equation \eqref{exceptioneq} in
the form
    \begin{equation}\label{exceptioneq2}
    (1-\beta)g_5=g_2+g_3+(1+\zeta)g_4
    \end{equation}
Now there are two possibilities. If at least one of the factors
$(1-\beta)$ and $(1+\zeta)$ is non-zero, then there are at least
three identically non-zero terms in the equation
\eqref{exceptioneq2}, and the early part of the proof can be again
applied to deduce that $g_{\hat k}=\lambda g_{\hat\ell}$ for some
meromorphic function $\lambda\in\mathcal{P}^1_c$ and distinct indexes
$\hat k,\hat\ell\in\{2,\ldots,5\}$. The only possible
non-quasi-M\"obius case left in \eqref{B} is now $(\hat
k,\hat\ell)=(2,3)$, which, combined with equation
\eqref{exceptioneq2}, yields
    \begin{equation}\label{exceptioneq3}
    (1-\beta)g_5=(1+\lambda)g_3+(1+\zeta)g_4.
    \end{equation}
By performing the reduction operation one more time to equation
\eqref{exceptioneq3} yields one of the quasi-M\"obius cases of
\eqref{B}. We still need to consider the case where $(1-\beta)$ and
$(1+\zeta)$ both vanish. But then $g_0= g_5$ and $g_1=-g_4$,
which, together with \eqref{gs} and \eqref{B} imply that
    \begin{equation}\label{pi3kappa3}
    \frac{\pi_3}{\kappa_3}=\frac{\kappa_3}{\pi_3}.
    \end{equation}
By combining \eqref{pi123} and \eqref{pi3kappa3} it finally
follows that either $f=g$, or $f=-g+2a$. The cases $\beta=g_1/g_4$ and $\beta=g_2/g_3$ can be treated similarly. \hfill$\Box$

\section{Logarithmic difference estimate with applications to difference equations}\label{logdesec}

A difference analogue of the lemma on the logarithmic derivative for finite-order meromorphic functions was proved
independently by Halburd and Kor\-ho\-nen \cite[Lemma
2.3]{halburdk:06JMAA}, \cite[Theorem 2.1]{halburdk:06AASFM} and Chiang and Feng \cite[Theorem 2.4]{chiangf:08}. The following theorem is an extension of these results to the case of hyper-order less than one.

\begin{theorem}\label{logder}
Let $f$ be a non-constant meromorphic function, $\varepsilon>0$ and $c\in\C$. If
$f$ is of finite order, then there exists a set $E=E(f, \varepsilon)$ satisfying
    \begin{equation}\label{setE}
    \limsup_{r\to\infty} \displaystyle\frac{\int_{E\cap[1,r)}
    dt/t}{\log r} \leq \varepsilon,
    \end{equation}
i.e. of logarithmic density at most $\varepsilon$, such that
    \begin{equation}\label{finorder}
    m\left(r,\frac{f(z+c)}{f(z)}\right)=O\left(\frac{\log
    r}{r}\,T(r,f)\right)
    \end{equation}
for all $r$ outside the set $E$. If $\varsigma(f)=\varsigma<1$ and $\varepsilon>0$, then
    \begin{equation}\label{inforder}
    m\left(r,\frac{f(z+c)}{f(z)}\right)=o
    \left(\frac{T(r,f)}{r^{1-\varsigma-\varepsilon}}\right)
    \end{equation}
for all $r$ outside of a set of finite logarithmic measure.
\end{theorem}

Note that in case \eqref{inforder} the size of the exceptional set also depends on $\varepsilon$.
In the finite-order case we have aimed for the cleanest possible
statement with the expense of allowing a slightly larger
exceptional set. By following the proof of \eqref{inforder} in
Theorem \ref{logder}, it follows that for all finite-order
meromorphic functions $f$ the estimate
    \begin{equation*}
    m\left(r,\frac{f(z+c)}{f(z)}\right)\leq \frac{(\log r)^{3+\varepsilon}}{r}\,T(r,f)
    \end{equation*}
holds outside of an exceptional set of finite logarithmic measure.
Concerning the sharpness of these estimates, the gamma function
$\Gamma(z)$, for instance, satisfies
    \begin{equation*}
    m\left(r,\frac{\Gamma(z+1)}{\Gamma(z)}\right) = \log r \cong
    \frac{T(r,\Gamma)}{r}
    \end{equation*}
as $r$ approaches infinity. It is not clear, however, whether or
not the factor $\log r$ can be removed in \eqref{finorder}.

Concerning the estimate \eqref{inforder}, by defining $g(z):=\exp(2^z)$, it follows that
    \begin{equation*}
    m\left(r,\frac{g(z+1)}{g(z)}\right)=T(r,g)
    \end{equation*}
which shows that the growth condition $\varsigma(f)<1$ cannot be
essentially weakened. Also if $\sigma(f)<\infty$, then $\varsigma(f)=0$, and \eqref{inforder} reduces precisely into \cite[Theorem 2.1]{halburdk:06AASFM}.

We will now briefly discuss some applications of Theorem~\ref{logder} in the theory of difference equations. Yanagihara \cite{yanagihara:80} has shown that if $w$ is a non-rational meromorphic solution of the difference equation
    \begin{equation}\label{1order}
    w(z+1) = R(z,w),
    \end{equation}
where $R(z,w)$ is rational in $w$ having rational coefficients,
then, for any $\varepsilon>0$ and $r$ sufficiently large,
$T(r,w)\geq [(1-\varepsilon)\deg_w(R)]^r$. Therefore, if
\eqref{1order} has at least one non-rational meromorphic solution
$w$ of hyper-order strictly less than one, it follows that
$\deg_w(R)=1$, that is, equation \eqref{1order} reduces to the
difference Riccati equation. This result is sharp in the sense
that the difference equation $w(z+1)=w(z)^2$ is satisfied by
$w(z)=\exp(2^z)$, which has hyper-order exactly one. In the
second-order case, it follows by the proof of \cite[Theorem
3]{ablowitzhh:00} that if
    \begin{equation}\label{start}
    w(z+1) + w(z-1) = R(z,w)
    \end{equation}
has a meromorphic solution of hyper-order less than one, then
$\deg_w(R)\leq2$. In \cite{halburdk:07PLMS} it was shown that if
\eqref{start} has at least one admissible finite-order meromorphic
solution $w$, then either $w$ satisfies a difference Riccati
equation, or equation \eqref{start} can be transformed into one in
a list of equations consisting of difference Painlev\'e equations
and linear equations. Recall that a meromorphic solution $w$ of a difference (or differential) equation is called \textit{admissible} if all coefficients of the equation are in $\mathcal{S}(w)$. Following \cite{halburdk:07PLMS}, and by
applying Theorem \ref{logder} instead of \cite[Lemma
2.3]{halburdk:06JMAA}, the following version of \cite[Theorem
1.1]{halburdk:07PLMS} is obtained.

\begin{theorem}\label{pthm}
If the equation \eqref{start} where $R(z,w)$ is rational in $w$
and meromorphic in $z$, has an admissible meromorphic solution $w$
such that $\varsigma(w)<1$, then either $w$ satisfies a difference
Riccati equation, or equation \eqref{start} can be transformed by
a linear change in $w$ to one of the following equations:
    \begin{eqnarray*}
    \wu+ w + \wdn &=& \frac{\pi_1 z + \pi_2}{w} + \kappa_1\nonumber   \\
    \wu- w + \wdn &=& \frac{\pi_1 z + \pi_2}{w} + (-1)^z\kappa_1\nonumber \\
    \wu+\wdn &=& \frac{\pi_1 z + \pi_3}{w} + \pi_2  \nonumber \\
    \wu+\wdn &=& \frac{\pi_1 z + \kappa_1}{w} + \frac{\pi_2}{w^2}\nonumber   \\
    \wu+\wdn &=&\frac{(\pi_1 z+\kappa_1)w+\pi_2}{(-1)^{-z}-w^2}  \nonumber \\
    \wu+\wdn &=&\frac{(\pi_1 z+\kappa_1)w+\pi_2}{1-w^2} \nonumber \\
     \wu w+w\wdn &=& p \nonumber   \\
    \wu+\wdn &=& p\,w+q \nonumber\\
    \end{eqnarray*}
where $\overline{w}\equiv w(z+1)$, $\underline{w}\equiv w(z-1)$ and  $\pi_k,\kappa_k$ are periodic functions with period~$k$.
\end{theorem}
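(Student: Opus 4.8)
The plan is to follow the proof of \cite[Theorem~1.1]{halburdk:07PLMS} essentially line by line, and to isolate the single analytic ingredient in which the finite-order hypothesis was genuinely used, replacing it by the hyper-order estimate \eqref{inforder} of Theorem~\ref{logder}. Two observations make this feasible. First, the degree bound $\deg_w R\le 2$ is already granted for solutions of hyper-order less than one: it is exactly the statement quoted just before the theorem, obtained from the proof of \cite[Theorem~3]{ablowitzhh:00}. So the opening reduction of \cite{halburdk:07PLMS} comes for free, and the remaining task is the long singularity/value-distribution analysis under $\deg_w R\le 2$, which pins down the rational form of $R(z,w)$ in $w$ up to slowly moving coefficients, followed by a linear change $w\mapsto\alpha w+\beta$ normalising each surviving case either to a difference Riccati equation or to one of the listed equations.

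First I would confirm that the only growth-sensitive inputs in that analysis are the difference analogue of the logarithmic derivative lemma, applied to $w$ and to the admissible coefficients in the shape $m(r,f(z\pm1)/f(z))$, together with the shift comparisons $T(r,w(z\pm1))=(1+o(1))T(r,w)$ and $N(r,w(z\pm1))=(1+o(1))N(r,w)$ outside an exceptional set. Every remaining step is algebraic manipulation of $R$ and bookkeeping with functions in $\mathcal{S}(w)$, which is insensitive to the hyper-order once these estimates are in hand. The substitution is then immediate: for $\varsigma(w)=\varsigma<1$ fix $\varepsilon>0$ with $\varepsilon<1-\varsigma$, so that $r^{1-\varsigma-\varepsilon}\to\infty$ and \eqref{inforder} yields $m(r,f(z\pm1)/f(z))=o(T(r,f)/r^{1-\varsigma-\varepsilon})=o(T(r,f))$. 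Hence every error term that was $o(T(r,w))$ in the finite-order proof remains $o(T(r,w))$ here; the shift comparisons follow in the same way from the hyper-order analogue already exploited in Section~\ref{cartansec} (the estimate $N(r,\overline{w})\le N(r+|c|,w)=N(r,w)+o(N(r,w))$ outside an exceptional set). The only genuine change is that the exceptional $r$-set, previously of zero logarithmic density, is now merely of finite logarithmic measure, which is harmless for an asymptotic statement.

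With these replacements inserted, the singularity analysis and the final linear normalisation of \cite{halburdk:07PLMS} are carried out verbatim, producing either a difference Riccati equation or one of the listed canonical forms; the periodicities of the displayed coefficients $\pi_k,\kappa_k$ (and of $p,q$) emerge from the same linear-dependence reasoning used elsewhere in the paper, since the quantities arising as ratios of solution data are forced into fields of periodic functions (cf. Lemma~\ref{linindepP}).

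The hard part will not be any single estimate but the audit itself. One must verify that nowhere in the lengthy case-by-case argument of \cite{halburdk:07PLMS} does finiteness of order enter except through the logarithmic-derivative and shift estimates discussed above; in particular, one must check that no step secretly relies on the sharper $O((\log r)/r)$ bound of the finite-order regime rather than the weaker $o(\cdot)$ bound available here. Since the classification depends only on which coefficients vanish identically and on the orders of the poles of $R$ in $w$, and these features are detected through comparisons of characteristic functions accurate up to $o(T(r,w))$ errors, I expect the audit to succeed and the conclusion to hold verbatim for all admissible meromorphic solutions of hyper-order strictly less than one.
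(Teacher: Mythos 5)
Your proposal matches the paper's own argument: the paper likewise proves this theorem by rerunning the proof of \cite[Theorem 1.1]{halburdk:07PLMS} with Theorem~\ref{logder} substituted for the finite-order logarithmic difference lemma, accepting the larger (finite logarithmic measure) exceptional set. The only point the paper makes explicit that you leave implicit in your ``audit'' is that the difference analogues of the Clunie and Mohon'ko lemmas used in \cite{halburdk:07PLMS} must also be extended to the hyper-order less than one setting, which is done by combining Theorem~\ref{logder} with the proofs of \cite[Theorems 3.1--3.2]{halburdk:06JMAA}.
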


\begin{remark}
In order to prove Theorem~\ref{pthm} one needs to extend the difference analogues of Clunie's and Mohon'ko's theorems used in \cite{halburdk:07PLMS} to meromorphic solutions of difference equations of hyper-order strictly less than one. This can be done by combining Theorem~\ref{logder} with the proofs of \cite[Theorem 3.1--3.2]{halburdk:06JMAA}. Similarly one can extend the generalization of \cite[Theorem 3.1]{halburdk:06JMAA} obtained in \cite{lainey:07} to meromorphic solutions $f$ such that $\varsigma(f)<1$.
\end{remark}

\section{Counterparts in $q$-shifts}\label{section_q}

In this section we state a $q$-difference analogue of Theorem
\ref{hypers}. Similarly to the Casorati determinant, we define the
$q$-\textit{Casorati determinant} of entire functions
$g_0,\ldots,g_n$ by
    \begin{equation*}
    \hat{C}(g_0(z),\ldots,g_n(z))=\left|\begin{array}{ccccc}
      g_0(z) & g_1(z) & \cdots & g_n(z) \\
      g_0(qz) & q_1(qz) & \cdots & g_n(qz) \\
      \vdots & \vdots & \ddots & \vdots \\
      g_0(q^{n}z) & g_1(q^{n}z) &  \cdots & g_n(q^{n}z) \\
    \end{array}\right|.
    \end{equation*}
If $q\in\C\setminus\{0,1\}$, then the $q$-Casorati determinant vanishes identically on~$\C$
if and only if the functions $g_0,\ldots,g_n$ are linearly
dependent over the field of functions $\phi(z)$ satisfying
$\phi(qz)\equiv \phi(z)$. However, if $|q|\not=1$, then the intersection
of this field with the field of meromorphic functions
consists only of constant functions, and we are therefore
restricted to study hyperplanes over $\C$ in this context.

\begin{theorem}\label{hypersq}
Let $f:\C\to\P^n$ be a holomorphic curve such that
$\sigma(f)=0$, let $q\in\C\setminus\{0,1\}$ and let $p\in\{1,\ldots,n+1\}$. If $n+p$ hyperplanes in general position have forward invariant preimages
under $f$ with respect to the rescaling $\tau(z)= qz$, then the image of $f$ is contained in a projective linear subspace of dimension $\leq[n/p]$.
\end{theorem}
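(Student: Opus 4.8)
The plan is to carry the entire machinery behind Theorem~\ref{hypers} over to the $q$-shift $\tau(z)=qz$ by systematically replacing the translation $z\mapsto z+c$ with the rescaling $z\mapsto qz$, the Casorati determinant $C$ with the $q$-Casorati determinant $\hat C$, and the field $\mathcal{P}^1_c$ of $c$-periodic functions with the field $\mathcal{F}_q$ of meromorphic functions $\phi$ satisfying $\phi(qz)\equiv\phi(z)$. As noted above, when $q$ is not a root of unity (in particular whenever $|q|\neq1$) this field reduces to $\C$, which is exactly why the conclusion is stated for projective linear subspaces over $\C$ rather than over a nontrivial field of $q$-constants. Throughout I would write $\Delta_q f=f(qz)-f(z)$ for the associated $q$-difference operator, whose kernel among meromorphic functions is $\mathcal{F}_q$.

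The first and decisive step is a $q$-difference analogue of the logarithmic derivative estimate of Theorem~\ref{logder}: for a meromorphic function $f$ of order zero,
$$
m\left(r,\frac{f(qz)}{f(z)}\right)=o\big(T(r,f)\big)
$$
outside an exceptional set. This is precisely where the growth hypothesis $\sigma(f)=0$ enters and takes the place of $\varsigma(f)<1$ from the shift case: since $T(r,f(qz))$ is comparable to $T(|q|r,f)$, only for order-zero functions is $T(|q|r,f)$ controlled by $T(r,f)$ up to a lower-order term, and only then do the error terms below remain genuinely negligible. Such an estimate is available in the $q$-difference Nevanlinna theory literature, and I would either cite it or reprove it along the lines of Section~\ref{section_lemma}.

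With this estimate in hand, the second step is to rerun the proof of Theorem~\ref{casorati} almost verbatim, substituting $\hat C$ for $C$ and $f(qz)$ for $\overline f$ everywhere. (Some care with notation is needed, since the symbol $q$ denotes both the scaling factor here and the number of linear forms in Theorem~\ref{casorati}; I would rename the latter.) This produces a $q$-difference Cartan second main theorem with ramification term expressed through the $q$-Casorati determinant and with linear independence taken over $\mathcal{F}_q$. From it, the $q$-analogues of Lemma~\ref{casoratilemma} and Lemma~\ref{linindepP} follow by the same induction and the same linear-algebra argument, using $\Delta_q$ in place of $\Delta_c$; in particular $\hat C(g_0,\ldots,g_n)\equiv0$ if and only if $g_0,\ldots,g_n$ are linearly dependent over $\mathcal{F}_q$. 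These yield a $q$-difference Borel theorem exactly as in the proof of Theorem~\ref{borelanalogue}.

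The final step is to replay the argument of Section~\ref{section_proofh}: writing $f=[f_0:\cdots:f_n]$ in reduced form and letting the linear forms $h_k(f)$ cut out the $n+p$ hyperplanes, the forward invariance of the preimages under $z\mapsto qz$ guarantees that the zeros of each $h_k(f)$ appear among the zeros of the relevant $q$-Casorati determinant, which drives the counting-function inequality coming from the $q$-Cartan theorem. Applying the $q$-Borel theorem to the linear relations among the $h_k(f)$ then produces a partition of the indices into $\mathcal{F}_q$-proportional classes, and the general-position hypothesis together with the value of $p$ forces the image of $f$ into a subspace of dimension $\leq[n/p]$. I expect the main obstacle to be the first step: establishing the $q$-logarithmic derivative estimate under $\sigma(f)=0$ and then verifying that this order-zero condition propagates through the whole chain---most importantly that $\sigma(g(qz))$ is still controlled by $\sigma(g)$ via $T(r,g(qz))\sim T(|q|r,g)$---so that every $o(T_g(r))$ error term remains genuinely small. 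The situation where $\mathcal{F}_q$ is nontrivial (i.e.\ $q$ a root of unity) requires separate care, and the cleanest statement is obtained under the restriction $|q|\neq1$.
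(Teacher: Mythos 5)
Your proposal follows essentially the same route as the paper, which itself only sketches the argument: find $q$-analogues of Theorems~\ref{casorati} and \ref{borelanalogue}, adapt the proof of Theorem~\ref{hypers}, and use the $q$-difference analogue of the lemma on the logarithmic derivative from \cite{barnetthkm:07} in place of Theorem~\ref{logder} --- precisely the estimate you identify as the main obstacle. Your observations that the field of $q$-invariant meromorphic functions reduces to $\C$ when $q$ is not a root of unity, and that the order-zero hypothesis is what makes $T(|q|r,f)$ comparable to $T(r,f)$, match the paper's own remarks.
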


Theorem \ref{hypersq} can be proved by finding $q$-analogues of Theorems~\ref{casorati} and \ref{borelanalogue} and adapting of the proof of Theorem \ref{hypers} suitably, where the
$q$-difference analogue of the lemma on the logarithmic
derivatives from \cite{barnetthkm:07} is used in the place of Theorem~\ref{logder}. We omit further details of
the proof. The following corollary is an immediate consequence of Theorem~\ref{hypersq}.

\begin{corollary}\label{hypersqcor}
Let $f:\C\to\P^n$ be a holomorphic curve such that
$\sigma(f)=0$, and let $q\in\C\setminus\{0,1\}$. If $2n+1$
hyperplanes in general position have forward invariant preimages
under $f$ with respect to the
rescaling $\tau(z)= qz$, then $f$ is a constant.
\end{corollary}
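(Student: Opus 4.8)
The plan is to derive Corollary~\ref{hypersqcor} from Theorem~\ref{hypersq} exactly as Corollary~\ref{hyperscor} follows from Theorem~\ref{hypers}, namely by specializing the parameter $p$ to its extreme admissible value $p=n+1$. First I would check that this choice is permitted: since $p$ ranges over $\{1,\ldots,n+1\}$, taking $p=n+1$ is legitimate, and it turns the hypothesis ``$n+p$ hyperplanes in general position have forward invariant preimages'' into ``$n+(n+1)=2n+1$ hyperplanes in general position have forward invariant preimages,'' which is precisely the hypothesis of the corollary. The growth assumption $\sigma(f)=0$ and the rescaling $\tau(z)=qz$ with $q\in\C\setminus\{0,1\}$ are identical in both statements, so no extra hypotheses are needed.

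Next I would read off the conclusion that Theorem~\ref{hypersq} delivers under this specialization. The theorem asserts that the image of $f$ lies in a projective linear subspace of dimension at most $[n/p]$. With $p=n+1$ this bound becomes $[n/(n+1)]$, and since $0\le n<n+1$ we have $0\le n/(n+1)<1$, so the integer part is $[n/(n+1)]=0$. Hence the image of $f$ is contained in a projective linear subspace of dimension $0$, i.e.\ a single point of $\P^n$. A holomorphic curve whose image is a single point is by definition a constant map, which is exactly the assertion of the corollary.

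The reasoning is entirely formal, so there is no genuine analytic obstacle here; the only point requiring a moment's care is the arithmetic of the floor function, namely verifying that $n/(n+1)$ is strictly less than $1$ for every $n\ge 1$ (and equals $0$ when $n$ could be $0$), which forces $[n/p]=0$ rather than any positive value. I would state this explicitly to make clear that the dimension bound genuinely collapses to a point rather than merely to a proper subspace. Since the corollary is an immediate consequence of Theorem~\ref{hypersq} with $p=n+1$ in exactly the same manner as Corollary~\ref{hyperscor} follows from Theorem~\ref{hypers}, no further argument is required.
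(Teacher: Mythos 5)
Your proposal is correct and matches the paper's own derivation: the paper states Corollary~\ref{hypersqcor} as an immediate consequence of Theorem~\ref{hypersq}, obtained exactly by taking $p=n+1$ so that $[n/p]=0$ and the image collapses to a point (here the subspace is over $\C$, so dimension zero does mean $f$ is constant). No further comment is needed.
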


The order condition $\sigma(f)=0$ in Theorem~\ref{hypersq}
and Corollary~\ref{hypersqcor} cannot be simply dropped in the following sense.

\begin{example}\label{qexample}
Five hyperplanes in $\mathbb{P}^2$ given by equations
$h_1(w):=w_0=0, h_2(w):=w_1=0, h_3(w):=w_2=0, h_4(w):=w_0+w_1+w_2=0, h_5(w):=w_0+\omega w_1+ \omega^2 w_2=0$
in general position have forward invariant preimages under the non-constant curve
$f=[1:\omega:e^z]:\mathbb{C}\to\mathbb{P}^2$ with $\omega= e^{2\pi i/3}$
with respect to the rescaling $\tau(z)=4z$.
In fact, $h_1(f)\equiv 1, h_2(f)\equiv \omega, h_3(f)=e^z$ are zero-free and the zeros of
$h_4(f)=e^z-\omega^2, h_5(f)=\omega^2(e^z-\omega^2)$ are forward invariant,
while $\sigma(f)=1$.
\end{example}

Let $f$ be a holomorphic curve such that $\sigma(f)=0$ given by $f(z):=[1:\omega:\Pi(z)+\omega^2]$ with the infinite product $\Pi(z)=\prod_{j=0}^{\infty}(1-z/q^j)$ when $|q|>1$. This $\Pi(z)$ satisfies $\Pi(qz)=(1-qz)\Pi(z)$ and therefore the zeros are forward invariant with respect to $\tau(z)=qz$, while we have $h_1(f)\equiv 1, h_2(f)\equiv \omega$ again and also $h_4(f)=\Pi(z), h_5(f)=\omega^2\Pi(z)$ now. Of course, the zeros of $h_3(f)=\Pi(z)-\omega$ cannot be kept forward invariant anymore with the rescaling.
This difference with Example~\ref{qexample} appears to stem from the fact that any non-constant entire function does not permit finite Picard exceptional values when the order of growth is less than one. This seems to indicate that Theorem~\ref{hypersq}
and Corollary~\ref{hypersqcor} should remain true when $0<\sigma(f)<1$, but at the moment we have no proof of this. Confirming this conjecture would require a different approach to the one used here, since it has been shown by examples that the $q$-difference analogue of the lemma on the logarithmic derivative obtained in \cite{barnetthkm:07} cannot be extended to meromorphic functions of non-zero order. Moreover, if $g$ is a transcendental entire function whose zeros are forward invariant with respect to a rescaling $r(z)=qz$, $q \not\in\{0,1\}$, then the function $\phi(z):=g(e^z)$ is entire and has the zeros that are forward invariant with respect to the shift $s(z)=z+c$ with $c=\log q \neq 0$. By an estimate due to J. Clunie in \cite{clunie:70},
it follows that
$$
\log \max_{|w|=e^r} |g(w)|+O(1)\geq \log \max_{|z|=r}|\phi(z)| \geq \log \max_{|w|=ae^{br}}|g(w)|+O(1)
$$
holds for some positive constants $a,b$ with $b<1$.
Therefore, we see that  $\varsigma(\phi)\leq 1$ if the order $\sigma(g)$ of $g$ is finite, while $\sigma(g)=0$ if $\varsigma(\phi)<1$.
This delicate growth balance between the functions $g$ and $\phi$ must be taken into account in any attempt trying to demonstrate the conjecture.

\medskip

\noindent \textbf{Note:} Any automorphism of $\C$ has the form
$\tau(z)=qz+r$, $q\not=0$, which is a composition of the shift
$z+r/q$ and rescaling $qz$.

\section{Sharpness of Theorems \ref{hypers} and
\ref{hypersq}}\label{examplesec}

Using similar methods to Green~\cite{green:72}, we see that the
dimension~$[n/p]$ in Theorem~\ref{hypers} is the sharpest possible bound and is always attained for any given $n+p$ hyperplanes in general position. In
fact, we only need to replace the choice of the exponential
functions $\exp (g_m)$ with holomorphic mappings
$g_m:\mathbb{C}^{[n/p]}\to\mathbb{P}^n$ $(m=1, \ldots, [n/p]+1)$
by the entire functions $1/\Gamma(z/c+\omega^m)$ on $\mathbb{C}$
with the ordinary gamma function $\Gamma(z)$ and the primitive
$([n/p]+1)$th root of unity $\omega$, for instance. Let $\mathcal{P}_c$ denote the field of periodic meromorphic functions with period $c$. Following
Green's argument, we obtain the holomorphic curve
$f:\mathbb{C}\to\mathbb{P}^n$ of order of growth one, whose image
is nondegenerately included in an $[n/p]$ dimensional linear
subspace of $\mathbb{P}^n$ and under which the $n+p$ hyperplanes
over $\mathcal{P}_{f}=\{\pi\in\mathcal{P}_c:T(r,\pi)=o(r\log r)\}$
have forward invariant preimages with respect to the
transformation $\tau(z)=z+c$. A similar argument will be used to demonstrate the sharpness of Theorem~\ref{hypersq}.

Let $f:\mathbb{C}\to\mathbb{P}^n$ be a linearly
non-degenerate holomorphic curve and $\{H_j\}_{j=1}^q$ be a family
of hyperplanes $H_j\subset \mathbb{P}^n$ in general
position. Instead of considering each hyperplane~$H_j$ itself
which is defined by
$$
\hat{H}_j(w):=\sum_{k=0}^n h_{jk}w_k =0 \quad (1\leq j\leq q)\,,
$$
we will mainly observe its representing
vector~$\mbox{\boldmath$h$}_j=(h_{j0},\, \cdots \, ,\, h_{jn}) \in
\mathbb{C}^{n+1}$. Here we recall that $w=[w_0:\cdots :w_n]$ is a
homogeneous coordinate system of $\mathbb{P}^n$. Then
it is convenient to use a symbol~$(\bullet,\, \bullet)$ to denote
a kind of `inner product' in $\mathbb{C}^{n+1}$ given by
$\bigl(\mbox{\boldmath$h$}_j,\,w\bigr):=\sum_{k=0}^n h_{jk}
w_k=\hat{H}_j(w).$ Let $f:=[f_0\, : \, \cdots \,:\, f_n]$ be a
reduced representation of the
curve~$f:\mathbb{C}\to\mathbb{P}^n$. Then
$\bigl(\mbox{\boldmath$h$}_j,\,f(z)\bigr):=\sum_{k=0}^n h_{jk}
f_k(z)=\hat{H}_j\bigl(f(z)\bigr)$ is an entire function
on~$\mathbb{C}$ for every~$j$. By
$\{\mbox{\boldmath$e$}_{k}\}_{k=0}^{n}$ we denote the standard
basis of~$\mathbb{C}^{n+1}$ throughout in this note, so that we
have $(\mbox{\boldmath$e$}_{k},\,f)=f_{k}$ $(0\leq k\leq n)$.

Let $m$ be a prime number and $\varepsilon$ be a primitive $m$th
root of unity. We take the set of $2m$ vectors ${\mathcal
H}:=\{\mbox{\boldmath$h$}_j : 1\leq j \leq 2m\}\subset
\mathbb{C}^{\, m}$ with
\[
\mbox{\boldmath$h$}_j= \left\{ \
\begin{array}{ll}
\mbox{\boldmath$e$}_{j-1} & (1\leq j\leq m)\,, \\[2ex]
\mbox{\boldmath$v$}_{j-m} & (m+1\leq j \leq 2m)\,,
\end{array}
\right.
\]
where
\[
\left(
\begin{array}{c}
\mbox{\boldmath$v$}_{1}     \\[1ex]
\mbox{\boldmath$v$}_{2}     \\[1ex]
\vdots  \\[1ex]
\mbox{\boldmath$v$}_{j} \\[1ex]
\vdots  \\[1ex]
\mbox{\boldmath$v$}_{m-1}   \\[1ex]
\mbox{\boldmath$v$}_{m} \\[1ex]
\end{array} \right)
= \left( \begin{array}{cccccc}
1 & 1 & \cdots & 1 &\cdots& 1\\[1ex]
1 & \varepsilon & \cdots & \varepsilon^k &\cdots&\varepsilon^{m-1}\\[1ex]
\vdots&\vdots&\ddots&\vdots&\ddots&\vdots\\[1ex]
1&\varepsilon^{j-1} &\cdots&\varepsilon^{(j-1) k}&\cdots&\varepsilon^{(j-1) (m-1)}\\[1ex]
\vdots&\vdots&\ddots&\vdots&\ddots&\vdots\\[1ex]
1&\varepsilon^{m-2}&\cdots&\varepsilon^{(m-2)k}&\cdots&\varepsilon^{(m-2)(m-1)}\\[1ex]
1&\varepsilon^{m-1}&\cdots&\varepsilon^{(m-1)k}&\cdots&\varepsilon^{(m-1)(m-1)}
\end{array} \right)\,,
\]
which is a regular $m$-matrix $V_m=(\varepsilon^{\ell k})$ $(0\leq
\ell,\, k\leq m-1)$, in fact, a Vandermonde matrix.

Then we see that any $m$ of the $2m$ vectors
$\mbox{\boldmath$h$}_{j}$ in ${\mathcal H}$ are linearly
independent over $\mathbb{C}$ in $\mathbb{C}^{\, m}$, so that
those $2m$ vectors give the family of $2m$ hyperplanes in
$\mathbb{P}^{\, m-1}(\mathbb{C})$ which are actually located in
general position. In order to confirm this matter, we only need to
know that every minor determinant of our Vandermonde matrix~$V_m$
does not vanish. As a matter of fact, it is only the reason why we
have chosen $m$ as a prime number that we can apply the following
lemma for the purpose:

\begin{lemma}[\cite{evansi:76}]
Let $m$ be a prime and let $\varepsilon$ be a primitive $m$th root
of unity in some field of characteristic zero. Suppose $a_1,\,
\ldots ,\, a_{\mu}\in\mathbb{Z}$ are pairwise incongruent
$(\!\!\!\mod m)$ and suppose the same for $b_1,\, \ldots ,\,
b_{\mu}\in\mathbb{Z}$. Then the determinant of the matrix
$\big(\varepsilon^{a_ib_j}\big)$ does not vanish.
\end{lemma}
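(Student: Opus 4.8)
The plan is to prove the non-vanishing by passing to the unique prime $\mathfrak p=(\varepsilon-1)$ of $\mathbb Z[\varepsilon]$ lying over $m$ and computing the exact $\mathfrak p$-adic valuation of $D:=\det(\varepsilon^{a_ib_j})$. Since $m$ is prime, $(m)=\mathfrak p^{\,m-1}$ is totally ramified with residue field $\mathbb F_m$; I normalize the valuation $v$ by $v(\delta)=1$ for the uniformizer $\delta:=\varepsilon-1$, so that $v(N)=(m-1)\,v_m(N)$ for any nonzero rational integer $N$, where $v_m$ denotes the ordinary $m$-adic valuation. It suffices to show that $v(D)$ is finite, and in fact I expect to obtain $v(D)=\binom{\mu}{2}$ exactly.

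First I would expand each entry $\varepsilon^{a_ib_j}=(1+\delta)^{a_ib_j}=\sum_{r\ge0}\binom{a_ib_j}{r}\delta^r$ and regroup by the degree in the product $a_ib_j$. Writing $\binom{a_ib_j}{r}=\sum_{s=0}^{r}c_{r,s}(a_ib_j)^s$ with $c_{r,r}=1/r!$, the entry becomes $\sum_{s\ge0}a_i^{\,s}b_j^{\,s}\psi_s(\delta)$, where $\psi_s(\delta)=\sum_{r\ge s}c_{r,s}\delta^r=\delta^s/s!+O(\delta^{s+1})$, so $v(\psi_s)=s$ whenever $s<m$. This exhibits the matrix as $A\,\mathrm{diag}(\psi_s)\,B^{\!\top}$ with $A=(a_i^{\,s})$ and $B=(b_j^{\,s})$, and Cauchy--Binet then gives
\[
D=\sum_{0\le s_1<\cdots<s_\mu}\Big(\prod_{k}\psi_{s_k}(\delta)\Big)\det\!\big(a_i^{\,s_k}\big)_{i,k}\,\det\!\big(b_j^{\,s_k}\big)_{j,k}.
\]
The term indexed by an increasing tuple $(s_1,\dots,s_\mu)$ has valuation $\sum_k s_k$ plus a nonnegative contribution from the two integer generalized Vandermonde determinants, so the minimum of $\sum_k s_k$ is attained only at the consecutive tuple $(0,1,\dots,\mu-1)$, with value $\binom{\mu}{2}$.

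The decisive step, and the place where primality is essential, is to check that this minimal term neither vanishes nor is cancelled. After factoring out $\delta^{\binom{\mu}{2}}$ its coefficient is
\[
\frac{1}{\prod_{k=0}^{\mu-1}k!}\;\prod_{i<i'}(a_{i'}-a_i)\;\prod_{j<j'}(b_{j'}-b_j),
\]
a product of two ordinary Vandermonde determinants. Because the $a_i$ are pairwise incongruent modulo $m$ and $m$ is prime, every difference $a_{i'}-a_i$ is nonzero mod $m$, hence an $m$-adic unit, and likewise for the $b_j$; moreover $\mu\le m$ forces each $k!$ with $k<m$ to be an $m$-adic unit. Thus the coefficient is a $\mathfrak p$-adic unit, the minimal term has valuation exactly $\binom{\mu}{2}$, and every other Cauchy--Binet term has valuation at least $\binom{\mu}{2}+1$, so no cancellation is possible. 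Therefore $v(D)=\binom{\mu}{2}<\infty$ and $D\ne0$. The main obstacle is precisely this unit check: for composite $m$ a difference $a_{i'}-a_i$ or a factor $k!$ may be a nonzero non-unit modulo $m$, which would let the leading term be overtaken or cancelled, in agreement with the fact that the statement genuinely fails for composite $m$. Equivalently, one may route the argument through the bialternant identity $D=\pm\,s_\lambda(\varepsilon^{a_1},\dots,\varepsilon^{a_\mu})\prod_{i<i'}(\varepsilon^{a_{i'}}-\varepsilon^{a_i})$ and reduce to the non-vanishing of a Schur polynomial at distinct $m$th roots of unity, which the same valuation estimate controls.
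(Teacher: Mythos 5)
The paper states this lemma as a quoted result from Evans and Isaacs and gives no proof of its own, so your argument has to stand on its own; unfortunately its central step is not valid. The factorization of the matrix as $A\,\mathrm{diag}(\psi_s)\,B^{\top}$ rests on the interchange of summation $\sum_{r}\sum_{s\le r}c_{r,s}N^{s}\delta^{r}=\sum_{s}N^{s}\psi_{s}(\delta)$ with $\psi_{s}(\delta)=\sum_{r\ge s}c_{r,s}\delta^{r}$, and this interchange fails $\mathfrak{p}$-adically. Indeed, as formal power series one has $\psi_{s}(x)=(\log(1+x))^{s}/s!$, and the series $\log(1+\delta)$ converges to the $\mathfrak{p}$-adic logarithm of the root of unity $\varepsilon=1+\delta$, which is $0$ (because $m\log\varepsilon=\log\varepsilon^{m}=\log 1=0$). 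Hence $\psi_{s}(\delta)=0$ for every $s\ge1$ while $\psi_{0}=1$, so the claimed entrywise identity $\varepsilon^{N}=\sum_{s}N^{s}\psi_{s}(\delta)$ would assert $\varepsilon^{N}=1$. The underlying reason Fubini fails is that $v(\delta^{r}/r!)=S_{m}(r)$, the base-$m$ digit sum of $r$, which does not tend to infinity with $r$; the double series is therefore not unconditionally convergent, and your key assertion $v(\psi_{s})=s$ for $s<m$ is false. Normalizing $0\le a_{i},b_{j}<m$ and truncating the $r$-sum at $(m-1)^{2}$ does make the rearrangement legitimate, but it does not rescue the valuation count: for $r\ge m$ the factor $1/r!$ is no longer a unit (for instance $v(\delta^{2m}/(2m)!)=2$), so the truncated $\psi_{2}$ acquires competing terms of valuation $2$, the coefficient of $\delta^{\binom{\mu}{2}}$ is no longer the clean product of two Vandermondes divided by factorials, the bound $v(\psi_{s})\ge s$ fails for $s\ge m$, and for $\mu\ge4$ tuples other than $(0,1,\dots,\mu-1)$ make the naive lower bound $\sum_{k}v(\psi_{s_{k}})$ drop below $\binom{\mu}{2}$. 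The "no cancellation" conclusion therefore does not follow from what you prove.

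The route you mention in your final sentence is essentially the standard correct proof and can be completed, but it is not "controlled by the same valuation estimate"; it needs a different and easier computation. Writing $\det(\varepsilon^{a_{i}b_{j}})=\pm\,s_{\lambda}(\varepsilon^{a_{1}},\dots,\varepsilon^{a_{\mu}})\prod_{i<i'}(\varepsilon^{a_{i'}}-\varepsilon^{a_{i}})$, the Vandermonde factor is visibly nonzero (the $\varepsilon^{a_{i}}$ are distinct since the $a_{i}$ are distinct mod $m$) and has valuation exactly $\binom{\mu}{2}$. For the Schur factor, since $\varepsilon^{a_{i}}\equiv1\pmod{\mathfrak{p}}$ and $s_{\lambda}$ has integer coefficients, one gets $s_{\lambda}(\varepsilon^{a_{1}},\dots,\varepsilon^{a_{\mu}})\equiv s_{\lambda}(1,\dots,1)\pmod{\mathfrak{p}}$, and the product formula $s_{\lambda}(1,\dots,1)=\prod_{j<j'}(b_{j'}-b_{j})/(j'-j)$ (with the $b_{j}$ sorted) is an integer that is a unit modulo $m$ precisely because the $b_{j}$ are pairwise incongruent modulo the prime $m$ and $j'-j<m$. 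That is the one place where primality and the incongruence hypotheses genuinely enter, and it yields $v(D)=\binom{\mu}{2}$ and $D\ne0$ as you intended; I would recommend making this the main line of the proof rather than an afterthought.
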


\noindent One sees that this is not the case unless $m$ is prime:
for example, when $m=4$, $\varepsilon=\sqrt{-1}$, $a_1=b_1=1$ and
$a_2=b_2=3$, then the determinant does vanish.

Now we assume concretely $m=11$ so that $n=10$. When $p=3$, we
consider four entire functions $\phi_j(z)$ to be determined
concretely later and define the linearly non-degenerate
holomorphic curve $f:\mathbb{C}\to \mathbb{P}^{10}(\mathbb{C})$ by
$$
f(z):=\bigl[c_1\phi_1:c_2\phi_1:c_3\phi_1:
c_1\phi_2:c_2\phi_2:c_3\phi_2:c_1\phi_3:c_2\phi_3:c_3\phi_3:c_1\phi_4:c_2\phi_4\bigr]
$$
with some non-zero constants $c_i$ $(i=1,2,3)$ and take the set of
$22$ vectors ${\mathcal H}:=\{\mbox{\boldmath$h$}_j : 1\leq j \leq
22\}\subset \mathbb{C}^{11}$ as above:
\[
\mbox{\boldmath$h$}_j= \left\{ \
\begin{array}{ll}
\mbox{\boldmath$e$}_{j-1} & (1\leq j\leq 11)\,, \\[2ex]
\mbox{\boldmath$v$}_{j-11} & (12\leq j \leq 22)\,.
\end{array}
\right.
\]
Choosing $c_1=\varepsilon$, $c_2=-(\varepsilon +1)$
and $c_3=1$, we have
$$
c_1+ c_2+ c_3=0 \quad \text{and} \quad
c_1+\varepsilon c_2 +\varepsilon^2 c_3=0
$$
as well as $c_1+ c_2=-1$ and $c_1+\varepsilon c_2=-\varepsilon^2$.
Then $n+p=13$ vectors
$\mbox{\boldmath$h$}_{j}$ $(1\leq j\leq 13)$ give
$$
\bigl(\mbox{\boldmath$h$}_{j}, f(z)\bigr)=d_j\phi_{k_j}(z) \quad
(1\leq j\leq 11)
$$
with $d_j=c_i$ $(j\equiv i \mod 3)$ and $k_j=[(j-1)/3]+1$ $(1\leq j\leq 11)$
and also
$$
\bigl(\mbox{\boldmath$h$}_{12}, f(z)\bigr)=-\phi_{4}(z)\,,
\quad
\bigl(\mbox{\boldmath$h$}_{13}, f(z)\bigr)=-\varepsilon^2 \phi_{4}(z)\,,
$$
both of which follow from the choice of the three constants $c_i$ $(i=1,2,3)$.
On the other hand, it follows from the definition that
$f(\mathbb{C})$ is in the linear subspace of $\mathbb{P}^{10}(\mathbb{C})$
with dimension $3=[10/3]$.

In the same way, given any $p$ $(1\leq p\leq 10)$, we can obtain a
desired curve $f(z):\mathbb{C}\to\mathbb{P}^{10}(\mathbb{C})$
$$
f(z):=\bigl[\, \overbrace{\underbrace{c_1\phi_1: \, \cdots \,
:c_{p}\phi_1}_{p}: \cdots : \underbrace{c_1\phi_{s-1}: \, \cdots
\, :c_{p}\phi_{s-1}}_{p}}^{p\times [10/p]}: \underbrace{c_1\phi_s:
\, \cdots \, :c_{11-p[10/p]}\phi_s}_{11-p[10/p]} \, \bigr]
$$
for $s=[10/p]+1$ entire functions $\phi_k$ $(1\leq k\leq s)$ and
non-zero constants $c_i$ $(1\leq i\leq p)$ satisfying the
simultaneous linear equations
$$
\sum_{i=1}^p \varepsilon^{\ell (i-1)}c_i=0 \quad (0\leq \ell \leq p-1),
$$
together with $p+10$ hyperplanes defined by
$\mbox{\boldmath$h$}_{j}$ $(1\leq j\leq p+10)$ which satisfy
$$
\bigl(\mbox{\boldmath$h$}_{j}, f(z)\bigr)=d_j\phi_{k_j}(z) \quad
(1\leq j\leq 11)
$$
with $d_j=c_i$ $(j\equiv i \mod p)$ and $k_j=[(j-1)/p]+1$ $(1\leq
j\leq 11)$ and also
$$
\bigl(\mbox{\boldmath$h$}_{j}, f(z)\bigr)= d_j\phi_{s}(z) \quad
(12\leq j\leq p+10)
$$
with $d_j=-\sum_{i=1}^{11-p[10/p]}
\varepsilon^{(j-12)(i-1)}c_i(\neq 0)$ $(12\leq j \leq p+10)$,
since $11-p[10/p]$ does not coincide with $p$.

For each prime number $m$, we can similarly construct a
corresponding holomorphic curve $f(z)$ following the idea of Green.
When $n+1$ is not a prime, the choice of suitable hyperplanes
would be a little complicated.

\begin{remark}
According to the value of $p$, the number $11-p[10/p]$ varies as
follows:
\[
\begin{tabular}{|c||c|c|c|c|c|c|c|c|c|c|} \hline
$p$ & 1 & 2 & 3 & 4 & 5 & 6 & 7 & 8 & 9 & 10 \\ \hline
$11-p[10/p]$ & 1 & 1 & 2 & 3 & 1 & 5 & 4 & 3 & 2 & 1 \\ \hline
\end{tabular}
\]
Therefore, if $p$ is not any divisor of $n=10$ so that
$[10/p]\neq 10/p$, then we can obtain a curve
$g:\mathbb{C}\to\mathbb{P}^{9}(\mathbb{C})$ with our desired
properties by projecting the curve~$f$ into
$\mathbb{P}^{10}(\mathbb{C})$ and $p+10$ vectors
$\mbox{\boldmath$h$}_{j}\in\mathbb{C}^{11}$ given above into the
subspace
$$
\bigl\{[w_0:w_1:\cdots:w_9:1]\, |\,
[w_0:w_1:\cdots:w_9]\in\mathbb{P}^{9}(\mathbb{C})\bigr\}
$$
and $\mathbb{C}^{10}_{11}:=\mathbb{C}^{10}\times\{0\}$,
respectively. Concretely consider the case when $p=3$. Then we
give $g:\mathbb{C}\to \mathbb{P}^{9}(\mathbb{C})$ by
$$
g(z):=\bigl[c_1\phi_1:c_2\phi_1:c_3\phi_1:
c_1\phi_2:c_2\phi_2:c_3\phi_2:c_1\phi_3:c_2\phi_3:c_3\phi_3:c_1\phi_4\bigr]
$$
with constants $c_1=\varepsilon$, $c_2=-(\varepsilon+1)$ and $c_3=1$
as well as the following $12$ vectors in
$\mathbb{C}^{10}$:
\[
\mbox{\boldmath$\hat{h}$}_j= \left\{ \
\begin{array}{ll}
\mbox{\boldmath$\hat{e}$}_{j-1} &:=\mbox{\boldmath$e$}_{j-1}\cap \mathbb{C}^{10} \ (1\leq j\leq 10)\,, \\[2ex]
\mbox{\boldmath$\hat{v}$}_{j-11} &:=(1,1,\cdots , 1) \ (j=12) \,, \\[2ex]
\mbox{\boldmath$\hat{v}$}_{j-11} &:=(1,\varepsilon,\cdots , \varepsilon^{9}) \ (j=13)\,.
\end{array}
\right.
\]
Then we have
$$
\bigl(\mbox{\boldmath$\hat{h}$}_{j}, g(z)\bigr)=d_j\phi_{k_j}(z)
\quad (1\leq j\leq 10)
$$
with $d_j=c_i$ $(j\equiv i \mod 3)$ and $k_j=[(j-1)/3]+1$ $(1\leq
j\leq 10)$ and also
$$
\bigl(\mbox{\boldmath$\hat{h}$}_{12},
g(z)\bigr)=\varepsilon \phi_{4}(z)\,, \quad
\bigl(\mbox{\boldmath$\hat{h}$}_{13},
g(z)\bigr)= \varepsilon^{-1} \phi_{4}(z)\,.
$$
On the other hand, it follows from the definition that
$g(\mathbb{C})$ is in the linear subspace of
$\mathbb{P}^{9}(\mathbb{C})$ with dimension $3=[9/3]$.

In general we give
$$
g(z):=\bigl[\, \overbrace{\underbrace{c_1\phi_1: \, \cdots \,
:c_{p}\phi_1}_{p}: \cdots : \underbrace{c_1\phi_{s-1}: \, \cdots
\, :c_{p}\phi_{s-1}}_{p}}^{p\times [10/p]}: \underbrace{c_1\phi_s:
\, \cdots \, :c_{10-p[10/p]}\phi_s}_{10-p[10/p]} \, \bigr]
$$
for $s=[10/p]+1$ entire functions $\phi_k$ $(1\leq k\leq s)$ and
non-zero constants $c_i$ $(1\leq i\leq p)$ satisfying the
simultaneous linear equations
$$
\sum_{i=1}^p \varepsilon^{\ell (i-1)}c_i=0 \quad (0\leq \ell \leq p-1),
$$
together with $p+9$ hyperplanes defined respectively by the vector
$\mbox{\boldmath$\hat{h}$}_{j}$ which is the projection of
$\mbox{\boldmath$h$}_{j}$ on $\mathbb{C}^{10}_{11}$ for each $j
(\neq 11)$ with $1\leq j\leq p+10$. They still satisfy
$$
\bigl(\mbox{\boldmath$\hat{h}$}_{j}, g(z)\bigr)=d_j\phi_{k_j}(z)
\quad (1\leq j\leq 10)
$$
with $d_j=c_i$ $(j\equiv i \mod p)$ and $k_j=[(j-1)/p]+1$ $(1\leq
j\leq 10)$ and also
$$
\bigl(\mbox{\boldmath$\hat{h}$}_{j}, g(z)\bigr)= d_j\phi_{s}(z)
\quad (12\leq j\leq p+10)
$$
with $d_j=-\sum_{i=1}^{10-p[10/p]}
\varepsilon^{(j-12)(i-1)}c_i(\neq 0)$ $(12\leq j \leq p+10)$,
since $p>10-p[10/p]>0$.
\end{remark}

For any $q\in\C$ such that $|q|\in(0,1)$ the $q$-Gamma function
$\Gamma_q(x)$ is defined by
    $$
    \Gamma_q(x):=\frac{(q;q)_\infty}{(q^x;q)_\infty}(1-q)^{1-x},
    $$
where $(a;q)_\infty:=\prod_{k=0}^\infty(1-aq^k)$
\cite{andrewsar:99}. By defining
    $$
    \gamma_q(z):=(1-q)^{x-1}\Gamma_q(x), \qquad z=q^x,
    $$
and $\gamma_q(0):=(q;q)_\infty$, it follows that $\gamma_q(z)$ is
a zero-order meromorphic function with no zeros and having its
poles exactly at the points $\{q^{-k}\}_{k=0}^\infty$. Therefore
the preimages of the poles of $\gamma_q(z)$ are forward invariant
with respect to the rescaling $\tau(z)=qz$. To show the sharpness
of Theorems \ref{hypers} and \ref{hypersq}, we may take the
functions $\phi_j(z)$ by
    $$
    \phi_j(z)=\frac{1}{\Gamma \bigl(z+(j-1)/2\bigr)} \quad \text{and}
    \quad \phi_j(z) =\frac{1}{\gamma_q(q^{(j-1)/2}z)},
    $$
respectively.

\section{The proof of Theorem \ref{logder}}\label{section_lemma}

\begin{lemma}\label{ineqlemma}
Let $a\in\C$, $c\in\C$ and $\delta\in(0,1)$. Then
    \begin{equation}\label{assertn}
    \int_0^{2\pi}\log^{+}\left|1+\frac{c}{re^{i\theta}-a}\right|\,\frac{d\theta}{2\pi} \leq
    \frac{1}{\delta}\log^{+}\left(1+\frac{|c|^\delta}{(1-\delta)}\frac{1}{r^\delta}\right)
    \end{equation}
for all $r>0.$
\end{lemma}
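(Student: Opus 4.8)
The plan is to strip the additive and complex structure from the integrand and thereby reduce the whole estimate to a single geometric integral over the circle. First I would use $|1+w|\le 1+|w|$ together with the monotonicity of $\log^+$, applied to $w=c/(re^{i\theta}-a)$, to obtain
\[
\log^{+}\left|1+\frac{c}{re^{i\theta}-a}\right|\le\log\left(1+\frac{|c|}{|re^{i\theta}-a|}\right).
\]
To manufacture both the factor $1/\delta$ and the $\delta$-th powers that appear on the right-hand side of the lemma, I would then invoke the subadditivity inequality $(1+x)^{\delta}\le 1+x^{\delta}$, valid for $x\ge0$ and $\delta\in(0,1)$; taking logarithms gives $\log(1+x)\le\frac1\delta\log(1+x^{\delta})$, so the integrand is bounded by $\frac1\delta\log\bigl(1+|c|^{\delta}|re^{i\theta}-a|^{-\delta}\bigr)$.

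Next I would apply Jensen's inequality, using concavity of $\log$ against the probability measure $d\theta/2\pi$, to pull the integral inside the logarithm:
\[
\int_0^{2\pi}\frac1\delta\log\left(1+\frac{|c|^{\delta}}{|re^{i\theta}-a|^{\delta}}\right)\frac{d\theta}{2\pi}\le\frac1\delta\log\left(1+|c|^{\delta}\,\frac{1}{2\pi}\int_0^{2\pi}\frac{d\theta}{|re^{i\theta}-a|^{\delta}}\right).
\]
Since $\log$ is increasing, the lemma now reduces entirely to the single estimate
\[
\frac{1}{2\pi}\int_0^{2\pi}\frac{d\theta}{|re^{i\theta}-a|^{\delta}}\le\frac{1}{(1-\delta)\,r^{\delta}},
\]
after which the claimed bound follows immediately.

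The main obstacle is this last inequality, whose difficulty is the integrable singularity that arises when $a$ lies on or near the circle of radius $r$. The plan is to rescale by setting $a=rb$, which factors out $r^{-\delta}$ and reduces matters to proving $\frac{1}{2\pi}\int_0^{2\pi}|e^{i\theta}-b|^{-\delta}\,d\theta\le\frac{1}{1-\delta}$ uniformly in $b\in\C$, and then to run a distribution-function argument. Writing $\nu(s)$ for the normalized arc length of $\{\theta:|e^{i\theta}-b|<s\}$, I would note that this set is an arc centred at $\arg b$ whose endpoints lie at distance $s$ from $b$; comparing the chord joining the endpoints (which is at most $2s$) with its value $2\sin(\Delta/2)$ in terms of the angular width $\Delta$ yields $\nu(s)\le\frac1\pi\arcsin s\le \frac{s}{2}$ for $s\le1$, while $\nu(s)\le1$ always. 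Feeding these two bounds into the layer-cake identity
\[
\frac{1}{2\pi}\int_0^{2\pi}\frac{d\theta}{|e^{i\theta}-b|^{\delta}}=\delta\int_0^{\infty}\nu(s)\,s^{-\delta-1}\,ds,
\]
and splitting the integral at $s=1$, gives $\frac{\delta}{2(1-\delta)}+1=\frac{2-\delta}{2(1-\delta)}\le\frac{1}{1-\delta}$, which is exactly the required bound. The only points needing care are the verification that the sublevel set is genuinely a single arc of width less than $\pi$ when $s<1$ (so that the $\arcsin$ estimate is legitimate) and the change of variables in the layer-cake formula.
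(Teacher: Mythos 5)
Your proposal is correct, and its skeleton coincides with the paper's: both pass from $\log^+|1+w|$ to $\tfrac1\delta\log\bigl(1+|w|^\delta\bigr)$ via $|1+w|\le 1+|w|$ and $(1+x)^\delta\le 1+x^\delta$, then use Jensen's inequality (concavity of $\log$ against $d\theta/2\pi$) to reduce everything to the single estimate $\frac{1}{2\pi}\int_0^{2\pi}|re^{i\theta}-a|^{-\delta}\,d\theta\le\frac{1}{(1-\delta)r^\delta}$. The two arguments diverge only in how this last integral is handled. The paper quotes the pointwise lower bound $|re^{i\theta}-|a||\ge \tfrac{2}{\pi}r\theta$ on $[0,\pi/2]$ (citing Gol'dberg--Ostrovski\u{\i}) and integrates $\theta^{-\delta}$ directly, picking up a factor $4$ from symmetry; you instead rescale to the unit circle and run a layer-cake argument, bounding the normalized measure of the sublevel sets $\{\theta:|e^{i\theta}-b|<s\}$ by $\tfrac1\pi\arcsin s\le s/2$ and splitting the $s$-integral at $1$. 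Your route is self-contained and lands on exactly the same constant ($\tfrac{2-\delta}{2(1-\delta)}\le\tfrac{1}{1-\delta}$), at the cost of the extra bookkeeping you correctly flag: one must check that for $s<1$ the sublevel set is a single arc of angular width at most $\pi$ (it is, since $|e^{i\theta}-b|\ge 1$ when $\theta-\arg b=\pm\pi/2$), and that the chord--angle comparison justifies the $\arcsin$ bound. Both approaches are equally valid; the paper's is shorter because it borrows the geometric inequality from the literature, while yours proves it from scratch.
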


\begin{proof} By Jensen's inequality \cite[p.~48]{cherryy:01}, it follows that
    \begin{equation}\label{eq1}
    \begin{split}
    \int_0^{2\pi}\log^{+}\left|1+\frac{c}{re^{i\theta}-a}\right|\,\frac{d\theta}{2\pi} &\leq
    \frac{1}{\delta}\int_0^{2\pi} \log^{+}\left(1+\left|\frac{c}{re^{i\theta}-
    a}\right|^\delta\right) \,\frac{d\theta}{2\pi} \\
    &\leq
    \frac{1}{\delta}\log^{+}\int_0^{2\pi} \left(1+\left|\frac{c}{re^{i\theta}-
    a}\right|^\delta\right) \,\frac{d\theta}{2\pi}
    \end{split}
    \end{equation}
for all $r>0$. Since $|re^{i\theta}-|a|| \geq
r\theta\frac{2}{\pi}$ for all $0\leq\theta\leq\frac{\pi}{2}$ and
any $a\in\C$ (see, e.g., \cite[p.~118]{goldbergo:08}), we have
    \begin{equation}\label{aux1}
    \int_0^{2\pi}\frac{d\theta}{|re^{i\theta}-a|^\delta} \leq
    4\int_0^{\frac{\pi}{2}}\frac{d\theta}{|re^{i\theta}-|a||^\delta} \leq \frac{2\pi}{1-
    \delta}\frac{1}{r^\delta}
    \end{equation}
whenever $\delta\in(0,1)$. Inequality \eqref{assertn} follows by
combining \eqref{eq1} and \eqref{aux1}.
\end{proof}

The following lemma is an improved version of the inequality
obtained in \cite[Lemma 2.3]{halburdk:06JMAA} (see also
\cite[Theorem 2.4]{chiangf:08}). Its method of proof is based on a combination of the techniques used in the proofs of \cite[Lemma 3]{hinkkanen:92} and
\cite[Lemma 2.3]{halburdk:06JMAA}.

\begin{lemma}\label{details}
Let $f$ be a meromorphic function such that $f(0)\not=0,\infty$
and let $c\in\C$. Then for all $\alpha >1$, $\delta\in(0,1)$ and
$r>0$,
    \begin{equation*}
    m\left(r,\frac{f(z+c)}{f(z)}\right) \leq
    \frac{K(\alpha,\delta,c)}{r^\delta}\left(T\big(\alpha(r+|c|),f\big)+\log^{+}\frac{1}{|f(0)|}\right),
    \end{equation*}
where
    \begin{equation*}
    K(\alpha,\delta,c)= \frac{4|c|^\delta(4\alpha+\alpha\delta+\delta)}{\delta(1-\delta)(\alpha-1)}.
    \end{equation*}
\end{lemma}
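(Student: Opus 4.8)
The plan is to begin from the Poisson--Jensen formula. Since $f(0)\neq 0,\infty$, for any $R>0$ and $|z|<R$ one has $\log|f(z)|$ equal to the boundary Poisson integral $\frac{1}{2\pi}\int_0^{2\pi}\log|f(Re^{i\psi})|\,\Re\frac{Re^{i\psi}+z}{Re^{i\psi}-z}\,d\psi$ plus the sum $\sum_{|a_\mu|<R}\log\bigl|\frac{R(z-a_\mu)}{R^2-\overline{a_\mu}z}\bigr|$ over the zeros $a_\mu$ of $f$ minus the analogous sum over the poles $b_\nu$. Evaluating this identity at $z+c$ and at $z$ (legitimate once $R>r+|c|$, so that both points lie in the disc when $|z|=r$) and subtracting, I would write $\log|f(z+c)/f(z)|=I_1+I_2+I_3$, where $I_1$ is the difference of the two boundary integrals and $I_2$, $I_3$ are the differences of the zero- and pole-sums. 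Then $m(r,f(z+c)/f(z))\le\frac{1}{2\pi}\int_0^{2\pi}(|I_1|+|I_2|+|I_3|)\,d\theta$, and it remains to estimate the three contributions separately.

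For $I_1$ I would use that the difference of the two Poisson kernels $\Re\frac{Re^{i\psi}+w}{Re^{i\psi}-w}$ taken at $w=z+c$ and $w=z$ is $O\bigl(R|c|/(R-r-|c|)^2\bigr)$ uniformly in $\psi$ for $|z|=r$, and that Jensen's formula bounds $\frac{1}{2\pi}\int_0^{2\pi}|\log|f(Re^{i\psi})||\,d\psi=m(R,f)+m(R,1/f)$ by $2T(R,f)+\log^+(1/|f(0)|)+O(1)$. This produces a contribution on the scale $\frac{R|c|}{(R-r-|c|)^2}\bigl(T(R,f)+\log^+(1/|f(0)|)\bigr)$, whose natural decay is of order $1/(r+|c|)$. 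The pole sum $I_3$ is handled identically to $I_2$, so I focus on $I_2$.

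For $I_2$ I would split each Blaschke difference into an inner factor $\log|1+c/(z-a_\mu)|$ and an outer factor $\log|1-\overline{a_\mu}c/(R^2-\overline{a_\mu}z)|$. Integrating the inner factors over $|z|=r$ is exactly what Lemma~\ref{ineqlemma} controls: applying it to the pair $(c,a_\mu)$ and, for the reciprocal direction, to $(-c,a_\mu-c)$, each such term contributes at most $\frac{2}{\delta}\log^+\bigl(1+\frac{|c|^\delta}{(1-\delta)r^\delta}\bigr)$, which is where the factors $\delta^{-1}(1-\delta)^{-1}$, $|c|^\delta$ and $r^{-\delta}$ originate. The outer factors are bounded uniformly by $O(|c|/(R-r))$, since the reflected points $R^2/\overline{a_\mu}$ lie outside the disc $|w|<R$. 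Summing over the $n(R,f)+n(R,1/f)$ zeros and poles inside the disc and bounding this count by $N$, hence by $T$, at a comparison radius introduces the loss $1/\log\alpha$ responsible for the factor $(\alpha-1)^{-1}$.

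The main obstacle will be the radius bookkeeping: the boundary term $I_1$ decays naturally like $1/(r+|c|)$ and requires $R>r+|c|$, whereas the zero/pole term decays like $r^{-\delta}$, and the two must be merged into the single uniform bound $\frac{K}{r^\delta}\bigl(T(\alpha(r+|c|),f)+\log^+(1/|f(0)|)\bigr)$ valid for \emph{every} $r>0$. I would resolve this by taking the Poisson--Jensen and comparison radii both comparable to $\alpha(r+|c|)$, so that the largest characteristic appearing is $T(\alpha(r+|c|),f)$; the $1/(r+|c|)$ and $O(1)$-count scales are then absorbed into $r^{-\delta}$ using that $r^\delta/(r+|c|)$ is bounded on $(0,\infty)$ by a constant multiple of $|c|^{\delta-1}$, after which collecting all constants yields $K(\alpha,\delta,c)$. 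The insistence on a bound holding for all $r>0$ (rather than asymptotically, or outside an exceptional set) is precisely what forces the use of the pointwise estimate of Lemma~\ref{ineqlemma}; the passage to an exceptional set is deferred to the deduction of Theorem~\ref{logder} itself.
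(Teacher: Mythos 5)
Your architecture matches the paper's: Poisson--Jensen at a radius strictly between $r+|c|$ and $\alpha(r+|c|)$ (the paper uses $s=\frac{\alpha+1}{2}(r+|c|)$, which leaves room to convert the zero/pole count $n(s,\cdot)$ into $T(\alpha(r+|c|),\cdot)$ at a cost of order $(\alpha-1)^{-1}$), a split into a boundary-kernel term and Blaschke-factor terms, and Lemma~\ref{ineqlemma} as the pointwise-in-$r$ engine for the latter. Your variant for the reflected Blaschke factors (a uniform $O(|c|/(R-r))$ bound instead of another application of Lemma~\ref{ineqlemma}) is harmless.

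The genuine gap is in $I_1$. Bounding the kernel difference uniformly in the boundary variable by $O\bigl(R|c|/(R-r-|c|)^2\bigr)$ costs \emph{two} powers of $(\alpha-1)^{-1}$: with $R\asymp\alpha(r+|c|)$ one has $R-r-|c|\asymp(\alpha-1)(r+|c|)$, so your $I_1$-contribution is of size $\frac{\alpha|c|}{(\alpha-1)^2(r+|c|)}\bigl(T+\log^+\frac{1}{|f(0)|}\bigr)$, and absorbing $1/(r+|c|)$ into $|c|^{\delta-1}r^{-\delta}$ does nothing to repair the $\alpha$-dependence. This does not yield the stated constant $K(\alpha,\delta,c)$, which carries only one factor $(\alpha-1)^{-1}$, and the discrepancy is not cosmetic: in the derivation of \eqref{inforder} one chooses $\alpha-1=(\log T(r+|c|,f))^{-(1+\varepsilon/3)}$, which can be as small as $r^{-\varsigma-\varepsilon}$, so squaring that loss destroys the bound $o\bigl(T(r,f)/r^{1-\varsigma-\varepsilon}\bigr)$ for every $\varsigma\geq 1/2$. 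The paper avoids the uniform bound by splitting the two denominator factors asymmetrically,
\begin{equation*}
\frac{1}{|se^{i\theta}-re^{i\varphi}-c|\,|se^{i\theta}-re^{i\varphi}|}\leq\frac{1}{(s-r-|c|)\,(s-r)^{1-\delta}}\cdot\frac{1}{|se^{i\theta}-re^{i\varphi}|^{\delta}},
\end{equation*}
then integrating the last factor over the circle $|z|=r$ (after Fubini) via \eqref{aux1}, which produces the decay $r^{-\delta}$ and the factor $(1-\delta)^{-1}$ directly; the inequality $s-r\geq|c|$ turns $|c|(s-r)^{\delta-1}$ into $|c|^{\delta}$, and the only surviving $\alpha$-dependence is $s/(s-r-|c|)=(\alpha+1)/(\alpha-1)$. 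You need this averaged estimate, not a pointwise one, because near the diagonal $\theta\approx\varphi$ the kernel difference genuinely has size $\asymp|c|/((\alpha-1)^2 r)$.
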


\begin{proof} By the Poisson-Jensen
formula \cite[Theorem 1.1]{hayman:64},
    \begin{equation}\label{integratethis}
    \begin{split}
    \log \left|\frac{f(z+c)}{f(z)}\right| &= \int_0^{2\pi}
    \log|f(se^{i\theta})|\textrm{Re}\left(\frac{se^{i\theta}+z+c}{se^{i\theta}-z-c}-
    \frac{se^{i\theta}+z}{se^{i\theta}-z}\right)\,\frac{d\theta}{2\pi}\\
    &\quad  + \sum_{|a_n|<s} \log \left|\frac{s(z+c-a_n)}{s^2-\bar a_n(z+c)}\cdot\frac{s^2-
    \bar    a_n z}{s(z-a_n)}\right| \\
    &\quad  - \sum_{|b_m|<s} \log \left|\frac{s(z+c-b_m)}{s^2-\bar b_m(z+c)}\cdot\frac{s^2-
    \bar    b_m z}{s(z-b_m)}\right|,
    \end{split}
    \end{equation}
where $|z|=r$, $s=\frac{\alpha+1}{2}(r+|c|)$, and $\{a_j\}$ and
$\{b_m\}$ are the sequences of zeros and poles of $f$,
respectively. Hence, by denoting $\{q_k\}:=\{a_j\}\cup\{b_m\}$ and
integrating \eqref{integratethis} over the set
$\{\xi\in[0,2\pi):
\left|\frac{f(re^{i\xi}+c)}{f(re^{i\xi})}\right|\geq 1\}$,
it follows that
    \begin{equation}\label{s1s2}
    m\left(r,\frac{f(z+c)}{f(z)}\right) \leq
     S_1(r)+S_2(r),
     \end{equation}
where
    \begin{equation}\label{S1prev}
    \begin{split}
    S_1(r) &= \int_0^{2\pi} \int_0^{2\pi}
    \left|\log|f(se^{i\theta})|\textrm{Re}\left(\frac{2c se^{i\theta}}{(se^{i\theta}-re^{i\varphi}-c)(se^{i\theta}-
    re^{i\varphi})}\right)\right|\,\frac{d\theta}{2\pi}\frac{d\varphi}{2\pi}
    \end{split}
    \end{equation}
and
    \begin{equation*}
    \begin{split}
    S_2(r) &= \sum_{|q_k|<s}
    \int_0^{2\pi}\log^{+}\left|1+\frac{c}{re^{i\varphi}-q_k}\right| \,\frac{d\varphi}{2\pi}
     \\
    & \quad +  \sum_{|q_k|<s}\int_0^{2\pi}\log^{+}\left|1-\frac{c}{re^{i\varphi}+c-q_k}\right|
    \,\frac{d\varphi}{2\pi}
    \\
    &\quad+ \sum_{|q_k|<s}  \int_0^{2\pi}\log^{+}\left|1+\frac{c}{re^{i\varphi}-\frac{s^2}{\bar q_k}
    }\right| \,\frac{d\varphi}{2\pi}\\
    & \quad+   \sum_{|q_k|<s} \int_0^{2\pi}\log^{+}\left|1-\frac{c}{re^{i\varphi}+c-\frac{s^2}{\bar q_k}
    }\right| \,\frac{d\varphi}{2\pi}.
    \end{split}
    \end{equation*}
By interchanging the order of integration in \eqref{S1prev} using Fubini's theorem, it follows that
    \begin{equation*}
    \begin{split}
    S_1(r) &= \int_0^{2\pi} \left|\log|f(se^{i\theta})|\right| \int_0^{2\pi}
    \left|\textrm{Re}\left(\frac{2c se^{i\theta}}{(se^{i\theta}-re^{i\varphi}-c)(se^{i\theta}-
    re^{i\varphi})}\right)\right|\,\frac{d\varphi}{2\pi}\frac{d\theta}{2\pi}.
    \end{split}
    \end{equation*}
Therefore, by applying the inequality \eqref{aux1} and using the identities
$s=(\alpha+1)(r+|c|)/2$, $s-r-|c|=(\alpha-1)(r+|c|)/2$ and
$s-r\geq |c|$, we have
    \begin{equation}\label{S1}
    \begin{split}
    S_1(r) &\leq \frac{2|c|s}{(s-r-|c|)(s-r)^{1-\delta}} \int_0^{2\pi} \left|\log|f(se^{i\theta})|\right|  \int_0^{2\pi}
    \frac{1}{|se^{i\theta}-
    re^{i\varphi}|^\delta}\,\frac{d\varphi}{2\pi}\frac{d\theta}{2\pi}\\
    &\leq  \frac{2|c|^\delta}{(1-\delta)}\cdot\frac{\alpha+1}{\alpha-1}\cdot\frac{1}{r^{\delta}}  \left(m(s,f)+m\left(s,\frac{1}{f}\right)\right)\\
    &\leq \frac{4|c|^\delta}{(1-\delta)}\cdot\frac{\alpha+1}{\alpha-1}\cdot\frac{1}{r^{\delta}}
    \left(T(s,f)+\log^+\frac{1}{|f(0)|}\right).
    \end{split}
    \end{equation}
Moreover, by using Lemma \ref{ineqlemma} and substituting
$s=\frac{\alpha+1}{2}(r+|c|)$, it follows that
    \begin{equation}\label{S23}
    \begin{split}
    S_2(r) &\leq \frac{4}{\delta}\left(n(s,f)+n\left(s,\frac{1}{f}\right)\right)
    \log^{+}\left(1+\frac{|c|^\delta}{(1-\delta)}\frac{1}{r^\delta}\right)\\
    &\leq \frac{16\alpha}{(\alpha-1)\delta}\left(T(\alpha(r+|c|),f)+\log^{+}\frac{1}{|f(0)|}\right)
    \log^{+}\left(1+\frac{|c|^\delta}{(1-\delta)}\frac{1}{r^\delta}\right)\\
    &\leq \frac{16|c|^\delta}{\delta(1-\delta)}\cdot\frac{\alpha}{\alpha-1}\cdot\frac{1}{r^\delta}\left(T(\alpha(r+|c|),f)+\log^{+}\frac{1}{|f(0)|}\right)
    \\
    \end{split}
    \end{equation}
The assertion follows by combining the inequalities \eqref{s1s2},
\eqref{S1} and \eqref{S23}.
\end{proof}

\noindent\textit{Proof of Theorem \ref{logder}: } If $f$ has
either a zero or a pole at the origin, then the assertion can be
proved by considering the function $w(z)=z^k f (z)$, where
$k\in\Z$ is chosen such that $w(0)\not=0,\infty$. Therefore it is
sufficient to consider only the case where $f(0)\not=0,\infty$.
Let $\xi(x)$ and $\phi(s)$ be positive, nondecreasing and
continuous functions defined for all sufficiently large $x$ and
$s$, respectively, and let $C>1$. Then, by \cite[Lemma
3.3.1]{cherryy:01}, we have
    \begin{equation}\label{cy}
    T\left(s+\frac{\phi(s)}{\xi(T(s,f))},f\right) \leq C\,T(s,f)
    \end{equation}
for all $s$ outside of a set $E$ satisfying
    \begin{equation}\label{E}
    \int_{E\cap [s_0,R]} \frac{ds}{\phi(s)} \leq  \frac{1}{\log
     C}\int_e^{T(R,f)}\frac{dx}{x\xi(x)} +O(1)
    \end{equation}
where $R<\infty$.

Assume first that $f$ is of finite order. By choosing $\phi(r)=
r$, $\xi(x)=1$ and $\alpha=2$ in Lemma~\ref{details} and
\eqref{cy}, it follows that
    \begin{equation}\label{gT}
    T(\alpha(r+|c|),f) =  T\left(r+|c|+\frac{\phi(r+|c|)}{\xi(T(r+|c|,f))},f\right) \leq C\, T(r+|c|,f)
    \end{equation}
for all $r$ outside of a set $E=E(f,\varepsilon)$ which according to \eqref{E}
satisfies
    \begin{equation}\label{ER}
    \int_{E\cap [1,R]} \frac{ds}{s} \leq  \frac{1}{\log
     C}\int_e^{R^\rho}\frac{dx}{x} +O(1)
    \end{equation}
for some $\rho>0$. Then, choosing
$C=\exp(\rho/\varepsilon)$, it follows by \eqref{ER} that
    \begin{equation*}
    d(E)=\limsup_{R\to\infty}\frac{\int_{E\cap [1,R]} \frac{ds}{s}}{\log R} \leq  \frac{\rho}{\log
     C} = \varepsilon.
    \end{equation*}
Therefore, $T(2r,f)=O(T(r,f))$ for all $r$ outside of an exceptional set $E=E(f,\varepsilon)$ of logarithmic
density $d(E)\leq \varepsilon$, and so by choosing $\delta=1-1/\log r$ in
Lemma~\ref{details} it follows that
    \begin{equation}\label{m2}
    m\left(r,\frac{f(z+c)}{f(z)}\right) =O\left(
    \frac{\log r}{r}\,T(r+|c|,f)\right)
    \end{equation}
as $r\to\infty$ such that $r\not\in E$.

If $f$ is of infinite order and $\varsigma(f)<1$, then by choosing
$\phi(r)= r$, $\xi(x)=(\log x)^{1+\varepsilon/3}$ and
    \begin{equation*}
    \alpha= 1+\frac{\phi(r+|c|)}{(r+|c|)\xi(T(r+|c|,f))},
    \end{equation*}
in Lemma~\ref{details} and \eqref{cy}, it follows that
    \begin{equation}\label{m3}
    m\left(r,\frac{f(z+c)}{f(z)}\right) =
    o\left(\frac{T(r+|c|,f)}{r^{1-\varsigma-\varepsilon}}\right)
    \end{equation}
as $r$ approaches infinity outside of an $r$-set of finite
logarithmic measure.

Asymptotic relations \eqref{m2} and
\eqref{m3} together with the following lemma, which is a
generalization of \cite[Lemma 2.1]{halburdk:07PLMS}, yield the
assertion of Theorem \ref{logder}. \hfill$\Box$

\begin{lemma}\label{technical}
Let $T:[0,+\infty)\to[0,+\infty)$ be a non-decreasing continuous
function and let $s\in(0,\infty)$. If the hyper-order of $T$ is
strictly less than one, i.e.,
    \begin{equation}\label{assu}
    \limsup_{r\to\infty}\frac{\log\log T(r)}{\log r}=\varsigma<1
    \end{equation}
and $\delta\in(0,1-\varsigma)$ then
   \begin{equation}\label{concl}
    T(r+s) = T(r)+ o\left(\frac{T(r)}{r^{\delta}}\right)
    \end{equation}
where $r$ runs to infinity outside of a set of finite logarithmic
measure.
\end{lemma}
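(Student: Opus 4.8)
The plan is to reduce the whole statement to the convergence of a single weighted integral and then extract the exceptional set by a soft measure-theoretic argument. The hypothesis \eqref{assu} forces $T(r)>1$ for all large $r$ (otherwise $\log\log T(r)$ is undefined), so $u(r):=\log T(r)$ is eventually positive and non-decreasing, whence $u(r+s)-u(r)\ge 0$; we may assume $T(r)\to\infty$, the bounded case being even easier. Since $\delta<1-\varsigma$, I would first fix $\beta=\varsigma+\varepsilon'$ so small that $\delta+\beta<1$, and choose $r_0$ with $u(r)\le r^{\beta}$ for $r\ge r_0$, which is possible by \eqref{assu}. Writing $g(r):=r^{\delta}\bigl(u(r+s)-u(r)\bigr)\ge 0$, the entire lemma reduces to showing
$$J:=\int_{r_0}^{\infty}g(r)\,\frac{dr}{r}=\int_{r_0}^{\infty}r^{\delta-1}\bigl(u(r+s)-u(r)\bigr)\,dr<\infty.$$

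To prove finiteness of $J$, I would estimate the truncated integral $J_R=\int_{r_0}^{R}r^{\delta-1}(u(r+s)-u(r))\,dr$ and let $R\to\infty$. Substituting $t=r+s$ in the part involving $u(r+s)$ and recombining the ranges gives
$$J_R=\int_{R}^{R+s}(t-s)^{\delta-1}u(t)\,dt+\int_{r_0+s}^{R}\bigl[(t-s)^{\delta-1}-t^{\delta-1}\bigr]u(t)\,dt-\int_{r_0}^{r_0+s}t^{\delta-1}u(t)\,dt.$$
The last term is a fixed constant. For the first term the bound $u(t)\le t^{\beta}$ gives a quantity of size $O(R^{\delta+\beta-1})$, which tends to $0$ precisely because $\delta+\beta<1$. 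For the middle term the mean value theorem yields $(t-s)^{\delta-1}-t^{\delta-1}=O(t^{\delta-2})$, so the integrand is $O(t^{\delta+\beta-2})$ and the integral converges since $\delta+\beta-2<-1$. Hence $J=\lim_{R\to\infty}J_R<\infty$. This estimate is the heart of the matter, and it is exactly here that both the hyper-order bound $u(t)\le t^{\beta}$ and the restriction $\delta<1-\varsigma$ are used; I expect this to be the only genuine obstacle, everything else being routine.

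Finally, from $\int_{r_0}^{\infty}g\,dr/r<\infty$ with $g\ge 0$ I would extract a single exceptional set by a standard device: choose an increasing sequence $a_n\to\infty$ with $n\int_{a_n}^{\infty}g\,dr/r\le 2^{-n}$ (possible since the tail of a convergent integral vanishes) and set $F=\bigcup_{n}\{\,r\ge a_n:g(r)\ge 1/n\,\}$. Then $\int_{F}dr/r\le\sum_n 2^{-n}=1<\infty$, and for $r\notin F$ with $r\ge a_n$ one has $g(r)<1/n$, so $g(r)\to0$ as $r\to\infty$ off $F$. Thus $u(r+s)-u(r)=o(r^{-\delta})$ outside a set of finite logarithmic measure; in particular this difference tends to $0$, so $T(r+s)/T(r)=\exp\{u(r+s)-u(r)\}=1+o(r^{-\delta})$, which is exactly $T(r+s)=T(r)+o\bigl(T(r)/r^{\delta}\bigr)$ on the same set. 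The extraction of $F$ and the concluding exponentiation are soft, so the proof stands or falls with the convergence of $J$.
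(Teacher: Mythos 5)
Your proof is correct, but it takes a genuinely different route from the paper's. The paper argues by contradiction: with $\tilde\delta\in(\delta,1-\varsigma)$ it supposes the set $F_\eta$ where $r^{\tilde\delta}\bigl(T(r+s)-T(r)\bigr)/T(r)\ge\eta$ has infinite logarithmic measure, covers $F_\eta$ by intervals $[r_n,r_n+s]$ with $r_{n+1}-r_n\ge s$, uses the infinite measure to force $r_{n_j}\le n_j^{1+\varepsilon}$ along a subsequence, and iterates the multiplicative inequality $T(r_{n+1})\ge\bigl(1+\eta r_n^{-\tilde\delta}\bigr)T(r_n)$ to produce a lower bound $1-\tilde\delta$ on the hyper-order, contradicting \eqref{assu}; the slack $\tilde\delta>\delta$ then upgrades the resulting $O$-estimate to the claimed $o$-estimate. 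You instead prove directly that $\int^\infty r^{\delta-1}\bigl(\log T(r+s)-\log T(r)\bigr)\,dr<\infty$ by the change of variable $t=r+s$ and telescoping, using only $\log T(t)\le t^\beta$ with $\delta+\beta<1$, and then extract the exceptional set by Chebyshev's inequality, which delivers the little-$o$ in one stroke without the auxiliary parameter $\tilde\delta$. Both arguments consume the hypotheses at the same point (the hyper-order bound together with $\delta<1-\varsigma$), and all your steps check out: the boundary term is $O(R^{\delta+\beta-1})\to0$, the mean-value estimate $(t-s)^{\delta-1}-t^{\delta-1}=O(t^{\delta-2})$ makes the middle integral absolutely convergent, monotonicity of $J_R$ gives $J<\infty$, and the final exponentiation is legitimate because the logarithmic increment tends to $0$ off the exceptional set. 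Your version is shorter and yields a clean quantitative by-product (the convergence of the weighted integral of logarithmic increments), whereas the paper's iteration makes the underlying mechanism more visible: the exceptional set consists of length-$s$ intervals on which $T$ must jump by a definite factor, which is exactly the behaviour of the extremal example $\exp(2^z)$ showing that $\varsigma<1$ cannot be relaxed.
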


\begin{proof}
Let $\tilde\delta\in(\delta,1-\varsigma)$, $\eta\in\mathbb{R}^+$
and assume that the set $F_{\eta}\subset[1,\infty)$ defined by
    \begin{equation}\label{Fdef}
    F_{\eta}=\left\{r\in\mathbb{R}^+:
    \frac{T(r+s)-T(r)}{T(r)}\cdot r^{\tilde\delta}\geq \eta
    \right\}
    \end{equation}
is of infinite logarithmic measure. Note that $F_\eta$ is a closed
set and therefore it has a smallest element, say $r_0$. Set
$r_n=\min\{F_\eta\cap [r_{n-1}+s,\infty)\}$ for all $n\in\N$. Then
the sequence $\{r_n\}_{n\in\Z^+}$ satisfies $r_{n+1}-r_n\geq s$
for all $n\in\Z^+$, $F_\eta\subset \bigcup_{n=0}^\infty
[r_n,r_n+s]$ and
    \begin{equation}\label{assuinpr2}
    \left(1+\frac{\eta}{{r_n}^{\tilde\delta}}\right)T(r_n)\leq T(r_{n+1})
    \end{equation}
for all $n\in\Z^+$.

Let $\varepsilon>0$, and suppose that there exists an $m\in\Z^+$
such that $r_n\geq n^{1+\varepsilon}$ for all $r_n\geq m$. But
then,
    \begin{eqnarray*}
    \int_{F_\eta\cap[1,\infty)}\frac{dt}{t} &\leq& \sum_{n=0}^\infty
    \int_{r_n}^{r_n+s}\frac{dt}{t}
    \leq \int_1^{m} \frac{dt}{t} +  \sum_{n=1}^\infty
    \log\left(1+\frac{s}{r_n}\right)\\
    &\leq& \sum_{n=1}^\infty
    \log\left(1+s n^{-(1+\varepsilon)}\right) +O(1)  <\infty
    \end{eqnarray*}
which contradicts the assumption $\int_{F_\eta\cap[1,\infty)}\frac{dt}{t}=\infty$.
Therefore the sequence $\{r_n\}_{n\in\Z^+}$ has a subsequence
$\{r_{n_j}\}_{j\in\Z^+}$ such that $r_{n_j}\leq
n_j^{1+\varepsilon}$ for all $j\in\Z^+$. By
iterating~(\ref{assuinpr2}) along the sequence
$\{r_{n_j}\}_{j\in\Z^+}$, we have
    \begin{equation*}
    T(r_{n_j})\geq \prod_{\nu=0}^{n_j-1}\left(1+\frac{\eta}{{r_{\nu}}^{\tilde\delta}}\right)T(r_0)
    \end{equation*}
for all $j\in \Z^+$, and so
    \begin{equation*}
    \begin{split}
    \limsup_{r\to\infty}\frac{\log\log T(r)}{\log r}
    &\geq
    \limsup_{j\to\infty}\frac{\displaystyle\log\left(\log T(r_0)+\sum_{\nu=0}^{n_j-1}\log\left(1+\frac{\eta}{r_\nu^{\tilde\delta}}\right)\right)}{\log r_{n_j}}\\
    &\geq\limsup_{j\to\infty}\frac{\displaystyle\log\left(\log
    T(r_0)+n_j\log\left(1+\frac{\eta}{r_{n_j}^{\tilde\delta}}\right)\right)}
    {(1+\varepsilon)\log n_j}\\
    &\geq \limsup_{j\to\infty}\frac{\displaystyle\log\Bigg(\log T(r_0)+n_j
    \frac{\eta}{n_j^{(1+\varepsilon)\tilde\delta}}
    \log\Bigg(1+\frac{\eta}{n_j^{(1+\varepsilon)\tilde\delta}}\Bigg)^{n_j^{\frac{(1+\varepsilon)\tilde\delta}{\eta}}}\Bigg)}
    {(1+\varepsilon)\log n_j}\\
    &\geq
    \limsup_{j\to\infty}\frac{[1-(1+\varepsilon)\tilde\delta] \log n_j}{(1+\varepsilon)\log n_j}\\
    &\geq \frac{1}{1+\varepsilon}-\tilde\delta.
    \end{split}
    \end{equation*}
By letting $\varepsilon\to0$, we obtain
    \begin{equation*}
    \limsup_{r\to\infty}\frac{\log\log T(r)}{\log
    r}\geq1-\tilde\delta
    \end{equation*}
which contradicts \eqref{assu} since $1-\tilde\delta>\varsigma$. Hence the logarithmic measure of
$F_\eta$ defined by \eqref{Fdef} must be finite, and so
    \begin{equation*}
    T(r+s) = T(r)+ O\left(\frac{T(r)}{r^{\tilde\delta}}\right)
    \end{equation*}
for all $r$ outside of a set of finite logarithmic measure.
Therefore the assertion \eqref{concl} follows.
\end{proof}

\section{The proof of Theorem \ref{casorati}}\label{section_proof}

The following lemma is due to Cartan \cite{cartan:33} (see also
\cite{gundersenh:04}).

\begin{lemma}[\cite{cartan:33}]\label{cartanlemma}
Let $n\geq 1$, let $z\in\C$ and let $g_0,\ldots,g_n$ be linearly
independent entire functions such that
$\max\{|g_0(z)|,\ldots,|g_n(z)|\}>0$ for each $z\in\C$. If
$f_0,\ldots,f_q$ are $q+1$ linear combinations of the $n+1$
functions $g_0,\ldots,g_n$, where $q>n$, such that any $n+1$ of
the $q+1$ functions $f_0,\ldots,f_q$ are linearly independent,
then there exists a positive constant $A$ that does not depend on
$z$, such that
    \begin{equation*}
    |g_j(z)|\leq A|f_{m_\nu}(z)|,
    \end{equation*}
where $0\leq j \leq n$, $0\leq\nu\leq q-n$ and the integers
$m_0,\ldots,m_q$ are chosen so that
    \begin{equation*}
    |f_{m_0}(z)|\geq |f_{m_2}(z)|\geq \cdots \geq |f_{m_q}(z)|.
    \end{equation*}
In particular, there exist at least $q-n+1$ functions $f_j$ that
do not vanish at $z$.
\end{lemma}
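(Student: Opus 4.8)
The plan is to reduce the statement to a finite-dimensional fact about the linear functionals that define the $f_k$, and then to settle that fact by a compactness argument on the unit sphere of $\C^{n+1}$.

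First I would record the linear-algebraic set-up. Write $f_k=\sum_{i=0}^n c_{ki}g_i$ and let $L_k\colon\C^{n+1}\to\C$ be the functional $L_k(v)=\sum_{i=0}^n c_{ki}v_i$, so that $f_k(z)=L_k(g(z))$ with $g(z)=(g_0(z),\ldots,g_n(z))$. The hypothesis that any $n+1$ of the $f_k$ are linearly independent is then exactly the statement that any $n+1$ of the functionals $L_0,\ldots,L_q$ are linearly independent in the dual of $\C^{n+1}$. Since $|g_j(z)|\le\max_{0\le i\le n}|g_i(z)|$ for every $j$, and since the ordering gives $|f_{m_\nu}(z)|\ge|f_{m_{q-n}}(z)|$ whenever $\nu\le q-n$, it suffices to produce a constant $A>0$, independent of $z$, with
\[
\max_{0\le i\le n}|g_i(z)|\le A\,|f_{m_{q-n}}(z)|
\]
for all $z$; the full assertion follows at once, taking $\nu$ down the chain.

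Next I would recast this as a homogeneity statement on $\C^{n+1}$. For $v\in\C^{n+1}$ let $\phi(v)$ denote the $(q-n+1)$-st largest of the numbers $|L_0(v)|,\ldots,|L_q(v)|$. This $\phi$ is continuous, since the order statistics of finitely many continuous functions are continuous, and it satisfies $\phi(\lambda v)=|\lambda|\phi(v)$ for all $\lambda\in\C$. With $v=g(z)$ one has $\phi(g(z))=|f_{m_{q-n}}(z)|$, so the displayed inequality is equivalent to $\max_i|v_i|\le A\,\phi(v)$ for all $v$, and by homogeneity it is enough to prove $\phi(v)\ge c>0$ on the unit sphere $\{\,\|v\|=1\,\}$.

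Here is where I expect the real content to sit. On the compact unit sphere the continuous function $\phi$ attains its minimum at some $v_0$, and I would argue that this minimum is strictly positive. Indeed, if $\phi(v_0)=0$ then at least $n+1$ of the numbers $|L_k(v_0)|$ vanish, namely all those not exceeding the $(q-n+1)$-st largest, of which there are $(q+1)-(q-n+1)+1=n+1$; hence at least $n+1$ of the functionals $L_k$ vanish at $v_0$. By the general-position hypothesis these $n+1$ functionals are linearly independent, so they span the whole dual space, and therefore their common kernel is $\{0\}$; this forces $v_0=0$, contradicting $\|v_0\|=1$. Thus $\min\phi=c>0$, giving $\phi(v)\ge c\|v\|$ and hence the desired inequality with $A$ a fixed multiple of $1/c$, the multiple accounting for the equivalence of $\max_i|v_i|$ and $\|v\|$. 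The main obstacle is precisely this positivity step: everything hinges on translating ``the $(q-n+1)$-st largest vanishes'' into ``$n+1$ functionals vanish'' and then invoking general position, while the compactness and homogeneity bookkeeping around it is routine. Finally, the ``in particular'' clause is immediate: since $g_0,\ldots,g_n$ have no common zero, $\max_i|g_i(z)|>0$, so $|f_{m_{q-n}}(z)|\ge\max_i|g_i(z)|/A>0$, whence $|f_{m_0}(z)|,\ldots,|f_{m_{q-n}}(z)|$ are all nonzero and at least $q-n+1$ of the $f_k$ do not vanish at $z$.
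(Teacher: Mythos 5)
Your proof is correct. Note that the paper does not actually prove this lemma — it is stated as a known result with a citation to Cartan's 1933 paper (see also Gundersen–Hayman), so there is no in-paper argument to compare against. The classical proof runs slightly differently from yours: for each subset $T\subset\{0,\ldots,q\}$ of $n+1$ indices one inverts the (constant, invertible) linear system expressing $f_k$, $k\in T$, in terms of $g_0,\ldots,g_n$, obtaining $|g_j(z)|\le A_T\max_{k\in T}|f_k(z)|$; applying this with $T=\{m_{q-n},\ldots,m_q\}$ (the $n+1$ smallest values) and taking $A=\max_T A_T$ over the finitely many subsets gives the bound. Your route replaces the explicit inversion by a compactness argument: the order-statistic function $\phi$ is continuous and homogeneous, and its positivity on the unit sphere is exactly the general-position hypothesis (if the $(q-n+1)$-st largest value vanishes then $n+1$ independent functionals share a nonzero kernel vector, which is absurd). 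The two arguments are essentially dual to one another — the finite maximum of the constants $A_T$ in the classical proof plays the role of $1/\min_{\|v\|=1}\phi(v)$ in yours — and both are complete; the classical version has the minor virtue of exhibiting $A$ explicitly in terms of the coefficient matrices, while yours isolates more cleanly where the general-position hypothesis enters. Your handling of the ``in particular'' clause via the absence of common zeros of the $g_j$ is also exactly right.
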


\noindent\textit{Proof of Theorem \ref{casorati}: } The proof
follows the original proof of Cartan's second main theorem, see,
e.g., \cite{cartan:33,hayman:84,gundersenh:04}, taking into
account the special properties of the Casorati determinant. Since
the functions $g_j$, where $j=0,\ldots,n$, are linearly independent over $\mathcal{P}^1_c$, it follows by Lemma~\ref{casoratilemma} that $C(g_0,\ldots,g_n)\not\equiv0$ and so the function $L$ is well defined. The functions $g_j$, $j=0,\ldots,n$, are also linearly independent over $\C$ (since $\C\subset\mathcal{P}^1_c$), and so by Lemma \ref{cartanlemma} the auxiliary
function
    \begin{equation}\label{v}
    v(z)=\max_{\{k_j\}_{j=0}^{q-n-1}\subset\{0,\ldots,q\}}\log|f_{k_0}(z)\cdots f_{k_{q-n-1}}(z)|
    \end{equation}
gives a finite real number for all $z\in\C$. Let
$\{a_0,\ldots,a_{q-n-1}\}\subset \{0,\ldots,q\}$, and
$\{b_0,\ldots,b_n\}=\{0,\ldots,q\}\setminus
\{a_0,\ldots,a_{q-n-1}\}$. Since $f_{b_0},\ldots,f_{b_n}$ are
linearly independent linear combinations of $g_0,\ldots,g_n$, it follows that
$C(f_{b_0},\ldots,f_{b_n})\not\equiv0$, and
    \begin{equation*}
    \left(\begin{array}{cccc}
      f_{b_0} & \cdots & f_{b_n} \\
      \overline{f}_{b_0}  & \cdots & \overline{f}_{b_n} \\
      \vdots & \ddots & \vdots \\
      \overline{f}^{[n]}_{b_0} &  \cdots & \overline{f}^{[n]}_{b_n} \\
    \end{array}\right)=\left(\begin{array}{cccc}
      g_0 &  \cdots & g_n \\
      \overline{g}_0 & \cdots & \overline{g}_n \\
      \vdots  & \ddots & \vdots \\
      \overline{g}^{[n]}_0 &  \cdots & \overline{g}^{[n]}_n \\
    \end{array}\right)\left(\begin{array}{cccc}
      \pi_{00} &  \cdots & \pi_{0n} \\
      \pi_{10} &  \cdots & \pi_{1n}\\
      \vdots &  \ddots & \vdots \\
      \pi_{n0} &   \cdots & \pi_{nn} \\
    \end{array}\right)
    \end{equation*}
where $\pi_{jm}\in \C$ for all $j=0,\ldots,n$ and $m=0,\ldots,n$.
Therefore,
    \begin{equation}\label{C}
    C(g_0,\ldots,g_n)=A(b_0,\ldots,b_n) C(f_{b_0},\ldots,f_{b_n})
    \end{equation}
where $A(b_0,\ldots,b_n)=:A_\textbf{b}\in \C\setminus\{0\}$.

We prove the inequality \eqref{casoratiineq} first for the auxiliary function
    \begin{equation}\label{Ltilde}
    \widetilde L := \frac{f_0\overline{f}_1\cdots \overline{f}^{[n]}_n f_{n+1}\cdots
    f_q}{C(g_0,\ldots,g_n)},
    \end{equation}
which is also well defined since $C(g_0,\ldots,g_n)\not\equiv0$. By substituting \eqref{C} into
\eqref{Ltilde}, we have
    \begin{equation*}
    \begin{split}
    \widetilde L&=\frac{f_0\overline{f}_1\cdots \overline{f}^{[n]}_n f_{n+1}\cdots
    f_q}{A_\textbf{b} C(f_{b_0},f_{b_1},\ldots,f_{b_n})}\\
    &= \frac{ f_0\cdots f_q \cdot (\overline{f}_1/f_1)\cdots (\overline{f}_n^{[n]}/f_n)}
    {A_\textbf{b} C(f_{b_0},f_{b_1},\ldots,f_{b_n})}\\
    &=\frac{ f_{b_0}\overline{f}_{b_1}\cdots \overline{f}_{b_n}^{[n]} \cdot
    f_{a_0}\cdots f_{a_{q-n-1}} (\overline{f}_1/f_1)\cdots
    (\overline{f}_n^{[n]}/f_n)\cdot(f_{b_1}/\overline{f}_{b_1})\cdots(f_{b_n}/\overline{f}_{b_n}^{[n]})}
    {A_\textbf{b} C(f_{b_0},f_{b_1},\ldots,f_{b_n})}\\
    &= \frac{
    f_{a_0}\cdots f_{a_{q-n-1}} (\overline{f}_1/f_1)\cdot (f_{b_1}/\overline{f}_{b_1})\cdots
    (\overline{f}_n^{[n]}/f_n)\cdot(f_{b_n}/\overline{f}_{b_n}^{[n]})}
    {\displaystyle \left( \frac{A_\textbf{b} f_{0}\overline{f}_{0}\cdots \overline{f}_{0}^{[n]}
    C(f_{b_0}/f_0,f_{b_1}/f_0,\ldots,f_{b_n}/f_0)}{f_{b_0}\overline{f}_{b_1}\cdots \overline{f}_{b_n}^{[n]}}\right)}\\
    &= \frac{
    f_{a_0}\cdots f_{a_{q-n-1}} (\overline{f}_1/\overline{f}_{b_1})/(f_1/f_{b_1})\cdots
    (\overline{f}_n^{[n]}/\overline{f}_{b_n}^{[n]})/(f_n/f_{b_n})}
    {\displaystyle \left(\frac{A_\textbf{b} f_{0}\overline{f}_{0}\cdots \overline{f}_{0}^{[n]}
    C(f_{b_0}/f_0,f_{b_1}/f_0,\ldots,f_{b_n}/f_0)}{f_{b_0}\overline{f}_{b_1}\cdots \overline{f}_{b_n}^{[n]}}\right)}\\
    &= \frac{
    f_{a_0}\cdots f_{a_{q-n-1}} (\overline{f}_1/\overline{f}_{b_1})/(f_1/f_{b_1})\cdots
    (\overline{f}_n^{[n]}/\overline{f}_{b_n}^{[n]})/(f_n/f_{b_n})}
    {\displaystyle \left( \frac{A_\textbf{b} C(f_{b_0}/f_0,f_{b_1}/f_0,\ldots,f_{b_n}/f_0)}{(f_{b_0}/f_0)\cdot
    (\overline{f}_{b_1}/\overline{f}_{0})\cdots (\overline{f}_{b_n}^{[n]}/\overline{f}_{0}^{[n]})}\right)}.\\
    \end{split}
    \end{equation*}
Therefore,
    \begin{equation*}
    \widetilde L=\frac{f_{a_0}\cdots f_{a_{q-n-1}}}{A_\textbf{b} G}
    \end{equation*}
where
    \begin{equation}\label{G}
    G=\frac{
    {\displaystyle \left( \frac{ C(f_{b_0}/f_0,f_{b_1}/f_0,\ldots,f_{b_n}/f_0)}{(f_{b_0}/f_0)\cdot
    (\overline{f}_{b_1}/\overline{f}_{0})\cdots (\overline{f}_{b_n}^{[n]}/\overline{f}_{0}^{[n]})}\right)}}{(\overline{f}_1/\overline{f}_{b_1})/(f_1/f_{b_1})\cdots
    (\overline{f}_n^{[n]}/\overline{f}_{b_n}^{[n]})/(f_n/f_{b_n})}.
    \end{equation}
By defining
    \begin{equation*}
    w(z)=\max_{\{b_j\}_{j=0}^n\subset\{0,\ldots,q\}}\log|A_\textbf{b} G(z)|
    \end{equation*}
it follows that $v(z)=\log|\widetilde L(z)|+w(z)$ whenever $\widetilde L(z)$ non-zero
and finite, and so
    \begin{equation}\label{intH}
    \int_0^{2\pi} v(re^{i\theta})d\theta =  \int_0^{2\pi}
    \log|\widetilde L(re^{i\theta})|d\theta +     \int_0^{2\pi} w(re^{i\theta})d\theta.
    \end{equation}
(If $\widetilde L$ has zeros or poles on the circle $\{z:|z|=r\}$, then the path of integration may be slightly
amended in \eqref{intH} so that any poles or zeros of $\widetilde L$ are avoided by the new
path. The validity of \eqref{intH} then follows by a limiting
argument where the modified path is allowed to approach the circle
$\{z:|z|=r\}$. See, e.g., \cite{gundersenh:04} for more details.)

Let $\{c_0,\ldots,c_{q-n-1}\}$ be the set of indexes for which the
maximum in \eqref{v} is attained for a particular choice of
$z\in\C$. Then by Lemma \ref{cartanlemma} it follows that
$\log|g_j(z)|\leq \log|f_{c_\nu}(z)|+\log A$ for all $0\leq j\leq
n$ and $0\leq \nu \leq q-n-1$, and so
    \begin{equation}\label{T}
    (q-n)T_g(r)\leq
    \frac{1}{2\pi}\int_0^{2\pi}v(re^{i\theta})d\theta + O(1)
    \end{equation}
as $r\to\infty$. Since the function $G$ in \eqref{G} consists purely of sums,
products and quotients of fractions of the form
$(\overline{f}_{j}^{[l]}/\overline{f}_k^{[l]})/(f_j/f_k)$ where
$l\in\{1,\ldots,n\}$ and $j,k\in\{0,\ldots,q\}$, it follows by
Theorem \ref{logder} that
    \begin{equation}\label{logdappl}
    \frac{1}{2\pi}\int_0^{2\pi}w(re^{i\theta})d\theta \leq   \sum_{j=0}^q\sum_{k=0}^q
    o\left(\frac{T(r,f_j/f_k)}{r^{1-\varsigma-\varepsilon}}\right)+O(1)
    \end{equation}
as $r$ approaches infinity outside of an exceptional set of finite
logarithmic measure. By combining \eqref{logdappl} and
\eqref{cartanT} it follows that
    \begin{equation}\label{westim}
    \frac{1}{2\pi}\int_0^{2\pi}w(re^{i\theta})d\theta =o\left(\frac{T_g(r)}{r^{1-\varsigma-\varepsilon}}\right)+O(1)
    \end{equation}
where $r$ tends to infinity outside of an exceptional set of
finite logarithmic measure. Finally, by Jensen's formula,
    \begin{equation}\label{jensen}
    \frac{1}{2\pi}\int_0^{2\pi} \log|\widetilde L(re^{i\theta})|d\theta =
    N\left(r,\frac{1}{\widetilde L}\right)-N(r,\widetilde L) + O(1)
    \end{equation}
as $r\to\infty$, and therefore we have
    \begin{equation}\label{tildeLineq}
    (q-n)T_g(r)\leq
    N\left(r,\frac{1}{\widetilde L}\right)-N(r,\widetilde L)+o\left(\frac{T_g(r)}{r^{1-\varsigma-\varepsilon}}\right)+O(1)
    \end{equation}
by combining
\eqref{intH}, \eqref{T}, \eqref{westim} and \eqref{jensen}.

We will complete the proof by showing how \eqref{casoratiineq} follows from \eqref{tildeLineq}. Consider first the counting functions
    \begin{equation}\label{Nj}
    N\left(r,\frac{1}{\overline{f}_j^{[j]}}\right) \leq N\left(r+j,\frac{1}{f_j}\right)
    \end{equation}
for $j=1,\ldots,n$. In order to apply Lemma \ref{technical} to the right side of inequality \eqref{Nj} we need to consider the growth of $N(r,1/f_j)$. Since each $f_j$ is an entire linear combination of $g_0,\ldots,g_n$, we have by Poisson-Jensen formula that
    \begin{equation}\label{Njgrowth}
    \begin{split}
    N\left(r,\frac{1}{f_j}\right)&=\int_0^{2\pi}\log|f_j(re^{i\theta})|\frac{d\theta}{2\pi}\\
    &\leq K \int_0^{2\pi}\sup_{j=0,\ldots,n}\log|g_j(re^{i\theta})|\frac{d\theta}{2\pi}\\
    &=K T_g(r)+O(1),
    \end{split}
    \end{equation}
where the constant $K>0$ is independent of $r$. Since $\varsigma(g)<1$, it follows by \eqref{Njgrowth} that
    $$
    \delta_j := \limsup_{r\to\infty} \frac{\log\log N\left(r,\frac{1}{f_j}\right)}{\log r}<1
    $$
for all $j=1,\ldots,n$. Therefore, by Lemma \ref{technical}, we have
    \begin{equation}\label{Njgrowth2}
    N\left(r,\frac{1}{\overline{f}_j^{[j]}}\right) \leq N\left(r+j,\frac{1}{f_j}\right) = N\left(r,\frac{1}{f_j}\right) + o\left(\frac{N\left(r,\frac{1}{f_j}\right)}{r^{1-\delta_j-\varepsilon}}\right),
    \end{equation}
where $j=1,\ldots,n$ and $r$ tends to infinity outside of an exceptional set of finite logarithmic density.  By using \eqref{Njgrowth}, the inequality \eqref{Njgrowth2} yields
    \begin{equation*}
    N\left(r,\frac{1}{\overline{f}_j^{[j]}}\right)\leq  N\left(r,\frac{1}{f_j}\right)     + o\left(\frac{T_g(r)}{r^{1-\varsigma-\varepsilon}}\right), \qquad j=1,\ldots,n,
    \end{equation*}
outside of an exceptional set of finite logarithmic measure. Therefore,
    \begin{equation*}
    \begin{split}
    &N\left(r,\frac{1}{\widetilde L}\right)-N(r,\widetilde L) \\ &\quad=  N\left(r,\frac{C(g_0,\ldots,g_n)}{f_0\overline{f}_1\cdots \overline{f}^{[n]}_n f_{n+1}\cdots
    f_q}\right)-N\left(r,\frac{f_0\overline{f}_1\cdots \overline{f}^{[n]}_n f_{n+1}\cdots
    f_q}{C(g_0,\ldots,g_n)}\right) \\ &\quad= N\left(r,\frac{1}{f_0\overline{f}_1\cdots \overline{f}^{[n]}_n f_{n+1}\cdots
    f_q}\right) - N\left(r,\frac{1}{C(g_0,\ldots,g_n)}\right) \\
    &\quad = \sum_{j=0}^n N\left(r,\frac{1}{\overline{f}^{[j]}_j}\right) + N\left(r,\frac{1}{f_{n+1}\cdots
    f_q}\right) - N\left(r,\frac{1}{C(g_0,\ldots,g_n)}\right)\\
    &\quad \leq  \sum_{j=0}^n N\left(r,\frac{1}{f_j}\right)     + N\left(r,\frac{1}{f_{n+1}\cdots
    f_q}\right) - N\left(r,\frac{1}{C(g_0,\ldots,g_n)}\right) + o\left(\frac{T_g(r)}{r^{1-\varsigma-\varepsilon}}\right)
    \\
    &\quad = N\left(r,\frac{1}{f_0\cdots f_q}\right) - N\left(r,\frac{1}{C(g_0,\ldots,g_n)}\right) + o\left(\frac{T_g(r)}{r^{1-\varsigma-\varepsilon}}\right)\\
    &\quad = N\left(r,\frac{1}{L}\right)-N(r,L) + o\left(\frac{T_g(r)}{r^{1-\varsigma-\varepsilon}}\right).
    \end{split}
    \end{equation*}
The assertion follows by substituting this inequality into \eqref{tildeLineq}. \hfill$\Box$

\section{The proof of Theorem \ref{hypers}}\label{section_proofh}

Let $x=[x_0:\cdots:x_n]$, and let $H_j(x)$ be the linear form
defining the hyperplane $H_j(x)=0$ for all $j=1,\ldots,n+p$. Since
by assumption any $n+1$ of the hyperplanes $H_j$,
$j=1,\ldots,n+p$, are linearly independent, it follows
that any $n+2$ of the forms $H_j(x)$ satisfy a linear relation
with non-zero coefficients in $\C$. By writing $\tau(z)=z+c$ and
$f=[f_0:\ldots:f_n]$, where the coordinate functions are entire functions without
common zeros, it follows by assumption that the functions $h_j=H_j(f)$ satisfy
    \begin{equation}\label{shift}
   \left\{ \tau(h_j^{-1}(\{0\}))\right\}\subset
   \left\{h_j^{-1}(\{0\})\right\}
    \end{equation}
for all $j=1,\ldots,n+p$, where $\{\cdot\}$ denotes a multiset which takes into account the multiplicities of its elements. The
set of indexes $\{1,\ldots,n+p\}$ may be split into disjoint
equivalence classes $S_k$ by saying that $i\sim j$ if $h_i=\alpha
h_j$ for some $\alpha\in \mathcal{P}_c^1\setminus\{0\}$. Therefore
    \begin{equation*}
    \{1,\ldots,n+p\}=\bigcup_{j=1}^N S_j
    \end{equation*}
for some $N\in\{1,\ldots,n+p\}$.

Suppose that the complement of $S_k$ has at least $n+1$ elements
for some $k\in\{1,\ldots,N\}$. Choose an element $s_0\in S_k$, and
denote $U=\{1,\ldots,n+p\}\setminus S_k\cup\{s_0\}$. Since the set
$U$ contains at least $n+2$ elements, there exists a subset
$U_0\subset U$ such that $U_0\cap S_k=\{s_0\}$ and
$\card(U_0)=n+2$. Therefore, there exists $\alpha_j\in
\C\setminus\{0\}$ such that
    \begin{equation*}
    \sum_{j\in U_0}\alpha_j H_j =0,
    \end{equation*}
and so
    \begin{equation*}
    \sum_{j\in U_0} \alpha_j h_j =0.
    \end{equation*}
This contradicts Theorem~\ref{borelanalogue}, and so the set
$\{1,\ldots,n+p\}\setminus S_k$ has at most $n$ elements. Hence
$S_k$ has at least $p$ elements for all $k=1,\ldots,N$, and it
follows that $N\leq (n+p)/p$.

Let $V$ be any subset of $\{1,\ldots,n+p\}$ with exactly $n+1$
elements. Then the forms $H_j$, $j\in V$, are linearly
independent. By denoting $V_k=V\cap S_k$ it follows that
    \begin{equation*}
    V=\bigcup_{k=1}^N V_k.
    \end{equation*}
Since each set $V_k$ gives raise to $\card(V_k)-1$ equations over the field $\mathcal{P}_c^1$, it
follows that we have at least
    \begin{equation*}
    \sum_{j=1}^N \card(V_k)-1 = n+1-N \geq
    n+1-\frac{n+p}{p}=n-\frac{n}{p}
    \end{equation*}
linearly independent relations over the field $\mathcal{P}_c^1$. Therefore the
image of $f$ is contained in a linear subspace over $\mathcal{P}_c^1$ of
dimension $\leq[n/p]$, as desired. \hfill$\Box$

\medskip

\noindent \textbf{Acknowledgments. } We wish to express our gratitude to Professor N. Toda for his critical comments. We thank the referee for his/her valuable suggestions which helped us to improve the manuscript, and for bringing the reference \cite{wonglw:09} to our attention.

\bibliographystyle{amsplain}

\def\cprime{$'$}
\providecommand{\bysame}{\leavevmode\hbox to3em{\hrulefill}\thinspace}
\providecommand{\MR}{\relax\ifhmode\unskip\space\fi MR }
\providecommand{\MRhref}[2]{%
  \href{http://www.ams.org/mathscinet-getitem?mr=#1}{#2}
}
\providecommand{\href}[2]{#2}

\end{document}